\documentclass[preprint,12pt]{elsarticle}
\usepackage{amsmath,amssymb,amsthm}
\usepackage{mathrsfs}
\usepackage{geometry}
\theoremstyle{definition}   % Upright font for all the following environments
\newtheorem{theorem}{Theorem}[section]
\newtheorem{lemma}[theorem]{Lemma}
\newtheorem{definition}[theorem]{Definition}
\newtheorem{remark}[theorem]{Remark}
\newtheorem{corollary}[theorem]{Corollary}

\newtheorem{assumption}[theorem]{Assumption}

\numberwithin{equation}{section}

\def\pt{\partial}

\def\ra{\rightarrow}
\def\bs{\boldsymbol}

\def\s{\subseteq}

\def\sps{\supseteq}

\def\ol{\overline}

\def\ba{\bigcap}
\def\bu{\bigcup}
\def\ms{\mathscr}
\def\vp{\varphi}
\def\lg{\langle}
\def\llg{\left\langle}

\def\rg{\rangle}
\def\rrg{\right\rangle}

\def\mf{\mathfrak}

\def\pt{\partial}

\def\Om{\Omega}
\def\la{\lambda}

\def\be{\beta}

\def\ga{\gamma}

\def\si{\sigma}

\def\Ga{\Gamma}
\def\La{\Lambda}

\def\ts{\times}

\def\iy{\infty}

\def\f{\frac}

\def\se{\setminus}

\def\Lra{\Leftrightarrow}
\def\Ra{\Rightarrow}

\def\df{\mathrm d}

\def\wt{\widetilde}
\def\wh{\widehat}

\def\esssup{\operatorname*{ess\ \! sup}}

\def\mcH{\mathcal{H}}

\def\mcL{\mathcal{L}}

\def\mcS{\mathcal{S}}

\def\mcO{\mathcal{O}}
\def\mcA{\mathcal{A}}

\def\mcD{\mathcal{D}}

	\DeclareMathOperator{\Div}{div}

	\DeclareMathOperator{\dist}{dist}

	\DeclareMathOperator{\supp}{{supp}}

	\newcommand{\R}{\mathbb R}
	\newcommand{\N}{\mathbb N}

\usepackage{hyperref}

\begin{document}

\begin{frontmatter}
\title{Non-homogeneous boundary value problems for second-order degenerate hyperbolic equations and their application}

\author[aff1]{Donghui Yang}
\ead{donghyang@outlook.com}
\author[aff2]{Jie Zhong\corref{cor1}}
\ead{jiezhongmath@gmail.com}
\cortext[cor1]{Corresponding author.}

\address[aff1]{School of Mathematics and Statistics, Central South University, Changsha 410075, China}
\address[aff2]{Department of Mathematics, California State University Los Angeles, Los Angeles, CA, USA}

\begin{abstract}
We study second-order hyperbolic equations with degenerate elliptic operators and non-homogeneous Dirichlet boundary inputs. We establish existence and regularity of weak solutions in weighted Sobolev spaces under mild assumptions on the degenerate weight. A Dirichlet map is constructed for the degenerate elliptic operator, leading to a solution theory that extends classical approaches to the degenerate setting. In particular, we derive energy estimates and well-posedness for boundary inputs of low regularity (in appropriate trace spaces), even though the classical Dirichlet-to-Neumann framework is not directly applicable in the degenerate setting. As an application, we prove an approximate controllability criterion, which generalizes the Hilbert Uniqueness Method to degenerate wave equations. Our framework accommodates higher-dimensional degenerate waves, non-homogeneous boundary conditions, and weighted functional analysis. We also illustrate how our criterion connects to higher-dimensional Grushin equations and waves with single-point degeneracy, and we highlight the remaining unique continuation/observability issue as an open problem.
\end{abstract}

\begin{keyword}
degenerate wave equation \sep Dirichlet boundary control \sep weighted Sobolev spaces \sep Dirichlet map \sep cosine operator \sep approximate controllability \sep HUM
\end{keyword}

\end{frontmatter}

\section{Introduction}

The controllability and stabilization of wave-type (second-order hyperbolic) equations have been extensively studied in the literature. Foundational works by, among others, G.~Chen \cite{Chen1, Chen2}, H.O.~Fattorini \cite{Fattorini1}, I.~Lasiecka and R.~Triggiani \cite{Lasiecka,Lasiecka1,Lasiecka2,Lasiecka3}, M.~Slemrod \cite{Slemrod}, P.-F.~Yao \cite{Yao}, and E.~Zuazua \cite{Zuazua} have established many of the classical results for uniformly hyperbolic equations with various types of boundary or internal controls. In particular, Lasiecka and Triggiani developed a rigorous framework for non-homogeneous boundary inputs in second-order hyperbolic equations using semigroup and cosine operator methods (see \cite{Lasiecka1,Lasiecka2}). For example, Lasiecka, Lions, and Triggiani \cite{Lasiecka1} analyzed wave equations with $L^2(0,T;L^2(\Gamma))$ Dirichlet boundary controls, introducing the concept of a Dirichlet map. However, these classical methods assume uniformly elliptic spatial operators. Hyperbolic equations with variable or degenerate coefficients pose additional challenges, often requiring advanced techniques from differential geometry (see \cite{Lasiecka3,Yao}).

In recent years, \emph{degenerate hyperbolic equations} have attracted growing attention. By ``degenerate'', we mean wave equations whose spatial differential operator vanishes or degenerates in part of the domain, causing the wave speed to vanish locally. Several papers have addressed such problems. Most of these studies, however, focus on one-dimensional cases (see \cite{Cannarsa1,Fragnelli,Gueye,Gao}). In higher dimensions, results are much more limited. Notably, Yang et al.\ studied multi-dimensional degenerate waves: in a recent work \cite{Yang}, Yang, Wu, Guo, and Chai obtained exact controllability for a two-dimensional Grushin-type wave equation, which degenerates along a boundary line. A further preprint \cite{Yang1} extends controllability results to an $N$-dimensional wave with an internal single-point degeneracy. Although related controllability/observability results exist for certain degenerate models (see \cite{Yang,Yang1}), transferring them to the present boundary Dirichlet-control setting and our weighted framework may require additional unique continuation/trace analysis, which we do not attempt here. These higher-dimensional cases exhibit fundamentally different behavior from the one-dimensional setting and require new analytical tools.

Degenerate hyperbolic equations in multiple space dimensions present significant analytical difficulties. To establish controllability or stabilization results, one must first prove well-posedness and regularity of solutions---a highly nontrivial task. The main challenges lie in developing appropriate weighted Sobolev spaces to handle the degeneracy and in constructing a Dirichlet map or solution operator for non-homogeneous boundary data when the elliptic operator is degenerate. These aspects sharply distinguish degenerate hyperbolic equations from uniformly elliptic ones. In the classical theory, one often uses the Dirichlet-to-Neumann map or builds solutions via elliptic lifting of boundary traces. In the degenerate case, however, such an approach is not straightforward: the standard Dirichlet map may fail to exist or be bounded under degeneracy conditions. For instance, the results in \cite{Lasiecka1,Lasiecka2} of Lasiecka and Triggiani on hyperbolic equations with $L^2$ boundary inputs cannot be directly extended, because the underlying elliptic operator is no longer invertible in the usual sense.

In this paper, we overcome these difficulties by developing a new framework for degenerate hyperbolic equations with boundary control. To this end, we introduce the concept of weight functions. We use the notion of $A_p$ weights. When $p=1$, a locally integrable nonnegative function $w(\cdot)$ is an $A_1$ weight if there exists a constant $c=c(w,1)>0$ such that for all cubes $K\subset\mathbb{R}^N$,
\[
\frac{1}{|K|}\int_K w(x)\, dx \le c \cdot \operatorname*{ess\,inf}_{x\in K} w.
\]
For $p\in(1,\infty)$, a locally integrable nonnegative function $w(\cdot)$ is called an $A_p$ weight if there exists a constant $c>0$ such that for all cubes $K\subset\mathbb{R}^N$,
\begin{equation}
\Bigg(\frac{1}{|K|}\int_K w(x)\, dx\Bigg)
\Bigg(\frac{1}{|K|}\int_K w(x)^{-\frac{1}{p-1}}\, dx\Bigg)^{p-1}
\le c.
\label{01.18.2}
\end{equation}
The infimum of such constants $c=c(w,p)$ is referred to as the $A_p (1\le p \le \infty)$ constant of $w(\cdot)$.

We focus on a prototypical second-order degenerate wave equation with Dirichlet boundary input:
\begin{equation}
\begin{cases}
\partial_{tt}y(x,t) - \operatorname{div}\!\big(w(x)\nabla y(x,t)\big) = 0, & (x,t)\in Q:=\Omega\times(0,T),\\
y(x,0) = y_0(x),\quad \partial_t y(x,0) = y_1(x), & x\in \Omega,\\
y(\xi, t) = u(\xi, t), & (\xi,t)\in \Sigma:=\Gamma\times(0,T),
\end{cases}
\label{07.08.01}
\end{equation}
where $\Omega\subset \mathbb{R}^N$ ($N\ge 1$) is a smooth bounded domain with boundary $\Gamma = \partial\Omega$, and $T>0$ is a fixed time horizon. The coefficient $w(x)$ is a weight function that may vanish or degenerate in parts of $\Omega$; specifically, we assume $w\in C(\overline{\Omega})$ is an $A_2$ weight with $w(x)>0$ on $\Gamma$. The function $u(t,\xi)$ is the imposed Dirichlet boundary control along $\Sigma = \Gamma \times (0,T)$. The initial data are $y_0 \in H^1_0(\Omega;w)$ and $y_1 \in L^2(\Omega)$. We denote
\begin{equation}
Ay = - \operatorname{div}(w\nabla y).
\label{07.08.02}
\end{equation}

Our primary goals are twofold. First, we establish the existence, uniqueness, and regularity of solutions $y(x,t)$ to \eqref{07.08.01} in suitable function spaces, even with non-zero boundary input $u(t,\xi)$. We develop this theory in Section~2 by introducing weighted Sobolev spaces $H^1(\Omega;w)$ and $H^2(\Omega;w)$ adapted to the degenerate operator $A = -\operatorname{div}(w\nabla\cdot)$. Under structural assumptions on $w$ and $\Omega$, we prove that for any admissible boundary control $u(\xi, t)$ and source term $f(x,t)$, the equation \eqref{07.08.01} possesses a unique weak solution with sharp energy estimates. In particular, we show that $y \in C([0,T];H^1(\Omega;w)) \cap C^1([0,T];L^2(\Omega))$ when $u\in H^1(\Sigma)$, and we derive a priori bounds on $y$ in terms of the norms of $u$, $f$, and the initial data. These results generalize the well-posedness theory for non-degenerate boundary-controlled waves (see \cite{Lions}) to the degenerate case.

A crucial step in our analysis is the construction of a Dirichlet map for the degenerate operator $A$. In Section~2.2, we introduce an operator $D$ assigning to each boundary function $\psi$ a function $y=D\psi$ in $\Omega$ such that $Ay=0$ in $\Omega$ and $y|_{\Gamma}=\psi$. This map allows us to lift boundary data into the interior. We prove that $D$ is well-defined and bounded between appropriate spaces, and we use it to decompose solutions of \eqref{07.08.01} into homogeneous and non-homogeneous parts. This leads to energy estimates for the homogeneous component and regularity results for the boundary traces.

Second, as an application of the above theory, we investigate approximate boundary controllability of the degenerate wave equation \eqref{07.08.01}. Roughly speaking, approximate controllability means that, starting from zero initial data, one can drive the solution arbitrarily close to any desired target state in the energy space using boundary controls. Using the well-posedness and trace results, we formulate the Hilbert Uniqueness Method (HUM) for our degenerate system. We show that controllability is equivalent to an observability condition: the only solution of the adjoint equation with vanishing boundary normal derivative is the trivial one. This criterion generalizes the HUM framework to degenerate wave equations. We also illustrate the connection to important examples such as multi-dimensional Grushin equations and waves with single-point degeneracy, and we leave the verification of the corresponding unique continuation/observability property as an open problem.

The paper is organized as follows. In Section~2, we present the functional setting, define weak and mild solutions, and derive energy estimates for both homogeneous and non-homogeneous boundary problems. In Section~3, we establish approximate controllability and discuss illustrative examples.

\section{Solution space and well-posedness theory}
% Insert detailed theorems, lemmas, proofs, and equations here from your polished draft
In this section, we develop the functional framework and prove the existence, uniqueness, and regularity of solutions to the degenerate wave equation. We begin by defining the weighted Sobolev spaces appropriate for our degenerate operator, and we state the assumptions on $w(x)$ that will ensure a well-behaved theory. Then we handle the homogeneous boundary condition case (Dirichlet $y=0$ on $\Gamma$) and the non-homogeneous case ($y=u$ on $\Gamma$) in turn. Throughout this section, $\Omega\subset \mathbb{R}^N$ is our fixed bounded domain with smooth boundary $\Gamma$, and $w:\Omega\to [0,\infty)$ is the weight function in the elliptic operator $A = -\operatorname{div}(w\nabla\cdot)$.

\subsection{Weighted Sobolev Spaces and Weak Solutions (Homogeneous Dirichlet Case)}

We fix a weight $w$ on $\Omega$ and assume $w\in A_2$. This is the minimal hypothesis we use to obtain the weighted Poincar\'e inequality and compact embeddings that underlie the energy estimates in Theorem~\ref{thm:2.12}. We now introduce the weighted Sobolev spaces and the operator domain needed for the homogeneous Dirichlet problem.

Define
\begin{equation*}
	\mathcal{H}^1(\Om;w)=\left\{y\in L^2(\Om)\colon \int_\Om |\nabla y|^2 w\df x<\iy \right\},
\end{equation*}
equipped with the inner product
\begin{equation*}
	(y,z)_{\mcH^1(\Om;w)}=\int_\Om (\nabla y\cdot \nabla z) w\df x+\int_\Om yz\df x,
\end{equation*}
and norm $\|y\|_{\mcH^1(\Om;w)}=\left[(y,y)_{\mcH^1(\Om;w)}\right]^\f{1}{2}$.

Let $H^1(\Om;w)$ be the closure of $C^\iy(\ol\Om)$ in $(\mcH^1(\Om;w), (\cdot,\cdot)_{\mcH^1(\Om;w)})$. Let $H_0^1(\Om;w)$ be the closure of $C_0^\iy(\Om)$ in $(H^1(\Om;w), (\cdot,\cdot)_{H^1(\Om;w)})$; its elements have zero trace on $\Ga$. We denote by $H^{-1}(\Om;w)$ the dual of $H_0^1(\Om;w)$ with pivot $L^2(\Om)$.

Define also
\begin{equation*}
	H^2(\Om;w)=\left\{y\in H^1(\Om;w)\colon A y\in L^2(\Om)\right\},
\end{equation*}
equipped with
\begin{equation*}
	(y,z)_{H^2(\Om;w)}=\int_\Om [\Div (w\nabla y)][\Div(w\nabla z)]\df x+\int_\Om (\nabla y\cdot\nabla z)w\df x+\int_\Om yz\df x,
\end{equation*}
and norm $\|y\|_{H^2(\Om;w)}=\left[(y,y)_{H^2(\Om;w)}\right]^\f{1}{2}$.

\begin{lemma}\label{08.13.L1}
  The spaces $(H^1(\Om;w),(\cdot,\cdot))_{H^1(\Om;w)}$, $(H_0^1(\Om;w),(\cdot,\cdot)_{H^1(\Om;w)})$, and \\
  $(H^2(\Om;w), (\cdot,\cdot)_{H^2(\Om;w)})$ are Hilbert spaces.
\end{lemma}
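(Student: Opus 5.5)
The plan is to show first that the ambient space $\mcH^1(\Om;w)$ is complete. Then $H^1(\Om;w)$ and $H_0^1(\Om;w)$ are complete, since they are closed subspaces by definition, being closures taken in that inner-product space. Finally, $H^2(\Om;w)$ is treated directly. Throughout, gradients are distributional; since $w\in A_2$, we have $w^{-1}\in L^1_{loc}$, and this is the key fact.

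The first step is to show that $\int_\Om |\nabla y|^2 w\,\df x<\iy$ forces $\nabla y\in L^1_{loc}(\Om)$. By Cauchy--Schwarz, for any compact $K\s\Om$,
\[
\int_K|\nabla y|\,\df x\le \Big(\int_K |\nabla y|^2w\,\df x\Big)^{1/2}\Big(\int_K w^{-1}\df x\Big)^{1/2},
\]
and the $A_2$ condition \eqref{01.18.2} gives $w^{-1}\in L^1_{loc}$.

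Next, take a Cauchy sequence $(y_n)$ in $\mcH^1(\Om;w)$. Then $y_n\to y$ in $L^2(\Om)$, and $\nabla y_n\to G$ in the weighted space $L^2(\Om;w)^N$, which is complete as an $L^2$ space with respect to the measure $w\,\df x$. By the estimate above, $\nabla y_n\to G$ in $L^1_{loc}$. Passing to the limit in $\int y_n\pt_i\vp=-\int \pt_iy_n\,\vp$ for $\vp\in C_0^\iy(\Om)$ gives $\nabla y=G$ distributionally. Hence $y\in\mcH^1(\Om;w)$ and $y_n\to y$ in norm.

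The second step uses that closures of subsets of a Hilbert space are closed subspaces. Note that $C^\iy(\ol\Om)\s\mcH^1(\Om;w)$ because $w\in C(\ol\Om)$, so $w$ is bounded. It follows that $H^1(\Om;w)$ is a closed subspace of $\mcH^1(\Om;w)$, and $H_0^1(\Om;w)$ is a closed subspace of $H^1(\Om;w)$. Both are therefore Hilbert spaces with the inherited inner product. Positive definiteness is immediate from the $L^2$ term in each inner product.

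For $H^2(\Om;w)$, take a Cauchy sequence $(y_n)$ in the $H^2(\Om;w)$ norm. It is Cauchy in $H^1(\Om;w)$, so $y_n\to y$ there, and $Ay_n\to g$ in $L^2(\Om)$. To identify $g=Ay$, test against $\vp\in C_0^\iy(\Om)$:
\[
\int_\Om (Ay_n)\vp\,\df x=\int_\Om w\nabla y_n\cdot\nabla\vp\,\df x\longrightarrow \int_\Om w\nabla y\cdot\nabla\vp\,\df x.
\]
This convergence holds because $w\nabla y_n\to w\nabla y$ in $L^1_{loc}$. Indeed,
\[
\int_K w|\nabla y_n-\nabla y|\,\df x\le \Big(\int_K w\,\df x\Big)^{1/2}\|\nabla y_n-\nabla y\|_{L^2(\Om;w)}.
\]
Hence $Ay=g\in L^2(\Om)$ in the sense of distributions, so $y\in H^2(\Om;w)$ and $y_n\to y$ in the $H^2(\Om;w)$ norm.

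The main obstacle is the identification of distributional limits, namely that the weighted $L^2$ limit of the gradients is the gradient of the limit. This is exactly where $w\in A_2$ (local integrability of $w^{-1}$) enters. Without it, $\mcH^1(\Om;w)$ could fail to be complete, or gradients might not even be well-defined distributions. Everything else is routine Hilbert-space bookkeeping.
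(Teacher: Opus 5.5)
Your argument is correct, but it takes a different route from the paper. The paper's proof only says this is standard and cites Garcia-Cuerva--Rubio de Francia, Heinonen--Kilpel\"ainen--Martio and Trudinger. You give a self-contained proof instead:
\begin{itemize}
\item you first show the ambient space $\mcH^1(\Om;w)$ is complete, using $w^{-1}\in L^1_{loc}$ (from the $A_2$ condition) to embed weighted-$L^2$ gradients into $L^1_{loc}$ and identify the limit of the gradients as the distributional gradient of the limit;
\item closedness of closures then gives completeness of $H^1(\Om;w)$ and $H_0^1(\Om;w)$;
\item for $H^2(\Om;w)$ you use $w\in L^1_{loc}$ to pass to the limit in $\int_\Om w\nabla y_n\cdot\nabla\vp\,\df x$.
\end{itemize}
This is essentially the mechanism behind the cited references: local integrability of $w^{-1/(p-1)}$ is the classical condition making weighted Sobolev spaces Banach spaces. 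Your version has two advantages. It makes explicit exactly where $A_2$ enters. It also covers the paper's graph-norm space $H^2(\Om;w)$, which those references do not literally treat, and it addresses the point that a closure taken inside $\mcH^1(\Om;w)$ is complete only because $\mcH^1(\Om;w)$ itself is. One point deserves a sentence: $w^{-1}\in L^1_{loc}$ forces $w>0$ a.e., so classes in $L^2(\Om;w)$ coincide with Lebesgue a.e.\ classes and the limit $G$ is an a.e.-defined function. Your Cauchy--Schwarz estimate already implicitly relies on this.
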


\begin{proof}
	This is a standard conclusion in Weighted Sobolev spaces. See \cite{GC,Heinonen,Trudinger}.
\end{proof}

\begin{assumption}\label{06.27.A1}\label{ass:2.2}
In addition to $w\in A_2$, we assume:
\begin{enumerate}
  \item (\textit{Non-degeneracy near the boundary}) There exist constants $\Lambda>0$ and $\beta>0$ such that
	\begin{equation}\label{08.13.1}
		w\in C^1(\ol{\mcO(\Ga;2\be)}) \mbox{ and } w\geq \La \mbox{ on } \ol{\mcO(\Ga;2\beta)},
	\end{equation}
	where $\mcO(\Ga;\beta)=\left\{x\in\Om\colon \dist(x,\Ga)<\beta\right\}$ and $\dist(x,\Ga)=\inf_{z\in \Ga}|x-z|$.
  \item (\textit{Weighted Poincar\'e inequality}) There exists $C>0$ such that for all $y\in H^1_0(\Omega;w)$,
\[
\int_{\Omega} |y(x)|^2\, dx \le C_P \int_{\Omega} w(x)|\nabla y(x)|^2\, dx.
\]

  \item (\textit{Compact embedding}) $H^1_0(\Omega;w)$ is compactly embedded in $L^2(\Omega)$.

  \item (\textit{Initial trace compatibility}) For well-posedness of boundary input problems, we assume $u(0,\xi) = y_0(\xi) = 0$ for $\xi\in \Gamma$.
\end{enumerate}
\end{assumption}

\begin{remark}
Assumption~\ref{06.27.A1}(1) enforces $w\ge \Lambda>0$ in a neighborhood of $\Gamma$, so the degeneracy is confined to the interior of $\Omega$ and the boundary remains non-degenerate. This ensures the standard boundary trace and conormal derivative tools needed for the Dirichlet map, lifting, and boundary control formulation. The case of boundary-degenerate weights requires different techniques and is not treated here.
\end{remark}

\begin{remark}\label{07.08.R1}\
\begin{enumerate}
\item Since $w>0$ on $\Gamma$, the weight is non-degenerate on the boundary. Thus, the assumption $w\geq \Lambda>0$ on $\overline{\mathcal{O}(\Gamma;2\beta)}$ in \eqref{08.13.1} of Assumption~\ref{06.27.A1} is reasonable, because we already assume $w\in C(\overline{\Omega})$ and $w>0$ on $\Gamma$. From this, it follows that $z\in H^1(\mathcal{O}(\Gamma;\beta))$ for all $z\in H^1_0(\Omega;w)$. Hence, the trace $z|_{\Gamma}$ of $z\in H^1(\Omega;w)$ coincides with the classical Sobolev trace. We will use the first part of \eqref{08.13.1} to obtain the last conclusion in Lemma~\ref{07.08.L1'}.
\item By Assumption~\ref{06.27.A1}, the seminorm
\[
\|y\|_{H^1(\Omega;w)}=\left(\int_\Omega |\nabla y|^2 w\, dx\right)^{1/2}
\]
is equivalent to the standard norm on $H^1_0(\Omega;w)$. Here and in what follows, we adopt this energy norm on $H^1_0(\Omega;w)$.

Note also that Poincaré inequalities are available for many degenerate equations. For example, the classical Grushin operator on a cube satisfies such an inequality. In general, Poincaré-type inequalities can often be derived from Hardy-type inequalities for degenerate equations.
\item There are many degenerate partial differential operators $A$ (or $\mathcal{A}$, see \eqref{07.08.02}) that satisfy Assumption~\ref{06.27.A1}. Examples include: the Grushin operator; the one-dimensional operator $\partial_x\!\big(|x|^\alpha \partial_x u\big)$ for $x\in (-1,1)$ (or $x\in (0,1)$) with $\alpha\in (0,1)$ (see \cite{Cannarsa1}); and the operator $\operatorname{div}(|x|^\alpha \nabla u)$ for $x\in \Omega\subset\mathbb{R}^N$ with $0\in \Omega$ and $\alpha\in (0,2)$ (see \cite{Yang1}).
\item Condition (4) in Assumption~\ref{06.27.A1} is a compatibility condition for the equation \eqref{07.08.01}. See, for example, Theorem~1.1 in \cite{Fattorini1}, p.~352, and also \cite{Lasiecka,Lasiecka1,Lasiecka2}.
\end{enumerate}
\end{remark}

We set $D(A) = H^2(\Omega;w)\cap H^1_0(\Omega;w)$.

\begin{definition}[Conormal derivative]\label{def:conormal}
For $A=-\Div(w\nabla\cdot)$ and $y$ with a trace on $\Gamma$, we define the conormal derivative
\[
\f{\pt y}{\pt \nu_\mcA}:=w\nabla y\cdot \nu \quad \text{on }\Gamma
\]
in the weak trace sense; namely, for $y\in H^1(\Omega;w)$ with $Ay\in L^2(\Omega)$, the boundary functional in Green's identity satisfies
\[
\int_\Omega (Ay)v\,dx+\int_\Omega w\nabla y\cdot\nabla v\,dx=\left\langle \f{\pt y}{\pt \nu_\mcA},\, v|_\Gamma\right\rangle_{H^{-1/2},H^{1/2}}
\quad \text{for all }v\in H^1(\Omega;w).
\]
\end{definition}

Under Assumption~\ref{ass:2.2}, $A:D(A)\to L^2(\Omega)$ is a positive self-adjoint operator with discrete spectrum
\[
0 < \lambda_1 \le \lambda_2 \le \cdots, \quad \lambda_n\to\infty,
\]
and there exists an orthonormal basis $\{\Phi_n\}$ of $L^2(\Omega)$ consisting of eigenfunctions of $A$. Moreover, $\{\Phi_n\}$ is orthogonal in $H^1_0(\Omega;w)$, with
\[
(\Phi_n,\Phi_m)_{H^1(\Omega;w)} = \lambda_n\delta_{nm}.
\]

\medskip

We now consider the homogeneous Dirichlet problem for the degenerate wave equation,
\begin{equation}
\begin{cases}
\partial_{tt}y + Ay = f(t,x), & (x,t)\in Q=\Omega\times(0,T),\\
y(0,x) = y_0(x),\quad \partial_t y(0,x) = y_1(x), & x\in \Omega,\\
y(t,\xi) = 0, & (\xi,t)\in \Sigma=\Gamma\times(0,T),
\end{cases}
\label{09.17.1}
\end{equation}
with data $f\in L^1(0,T;L^2(\Omega))$, $y_0\in H^1_0(\Omega;w)$, and $y_1\in L^2(\Omega)$. The definition below is the notion of solution used in Theorem~\ref{thm:2.12}.

\begin{definition}[Weak solution]\label{def:weak}
A function $y:[0,T]\to H^1_0(\Omega;w)$ is a \emph{weak solution} of \eqref{09.17.1} if
\[
y \in L^\infty(0,T;H^1_0(\Omega;w)), \quad \partial_t y \in L^\infty(0,T;L^2(\Omega)), \quad \partial_{tt}y\in L^1(0,T;H^{-1}(\Omega;w)),
\]
$y(0)=y_0$, $\partial_t y(0)=y_1$, and for all $v\in H^1_0(\Omega;w)$, almost every $t\in [0,T]$,
\[
\langle \partial_{tt}y,v\rangle_{H^{-1},H^1_0} + \int_\Omega w(x)\nabla y\cdot\nabla v\, dx = \int_\Omega f v\, dx.
\]
\end{definition}

\begin{theorem}[Existence and regularity]\label{thm:2.12}
Under Assumption~\ref{ass:2.2}:
\begin{enumerate}
\item For $y_0\in H^1_0(\Omega;w)$, $y_1\in L^2(\Omega)$, and $f\in L^1(0,T;L^2(\Omega))$, there exists a unique weak solution $y$ of \eqref{09.17.1} such that
\[
\esssup_{0\le t\le T} \big(\|y(t)\|_{H^1_0(\Omega;w)} + \|\partial_t y(t)\|_{L^2(\Omega)}\big)
\le C\big(\|y_0\|_{H^1_0(\Omega;w)} + \|y_1\|_{L^2(\Omega)} + \|f\|_{L^1(0,T;L^2(\Omega))}\big).
\]
\item If $y_0\in D(A)$, $y_1\in H^1_0(\Omega;w)$, and $\partial_t f \in L^1(0,T;L^2(\Omega))$, then
\[
y \in L^\infty(0,T;D(A)) \cap W^{1,\infty}(0,T;H^1_0(\Omega;w)), \quad \partial_{tt}y \in L^\infty(0,T;L^2(\Omega)),
\]
and
\[
\esssup_{0\le t\le T}\Big(\|y(t)\|_{D(A)} + \|\partial_t y(t)\|_{H^1_0(\Omega;w)} + \|\partial_{tt}y(t)\|_{L^2(\Omega)}\Big)
\le C\big(\|y_0\|_{D(A)} + \|y_1\|_{H^1_0(\Omega;w)} + \|f\|_{L^1} + \|\partial_t f\|_{L^1}\big).
\]
\end{enumerate}
\end{theorem}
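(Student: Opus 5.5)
I will prove the theorem by a Faedo--Galerkin argument built on the eigenbasis $\{\Phi_n\}$ of $A$, which is available under Assumption~\ref{ass:2.2}. For each $m$ I seek
\[
y_m(t)=\sum_{n=1}^m g_{n}^m(t)\Phi_n ,
\]
where the coefficients solve the decoupled ODE system
\[
\ddot g_n^m+\lambda_n g_n^m=(f(t),\Phi_n)_{L^2},
\qquad g_n^m(0)=(y_0,\Phi_n),\qquad \dot g_n^m(0)=(y_1,\Phi_n).
\]
Since $f\in L^1(0,T;L^2(\Omega))$, each equation has a unique absolutely continuous solution, with $\dot g_n^m$ absolutely continuous, given explicitly by Duhamel's formula with $\cos(\sqrt{\lambda_n}t)$ and $\sin(\sqrt{\lambda_n}t)/\sqrt{\lambda_n}$. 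The projections $P_m y_0\to y_0$ in $H^1_0(\Omega;w)$ and $P_m y_1\to y_1$ in $L^2$ converge because $\{\Phi_n\}$ is orthonormal in $L^2$ and orthogonal in $H^1_0(\Omega;w)$ with norms $\lambda_n$. Equivalently, one could write the solution directly as the cosine-operator formula $y(t)=C(t)y_0+S(t)y_1+\int_0^tS(t-s)f(s)\,ds$; the Galerkin route makes the weak formulation transparent.

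For the energy estimate I test the Galerkin equation with $\dot y_m$. Define
\[
E_m(t)=\|\dot y_m\|_{L^2}^2+\|y_m\|_{H^1_0(\Omega;w)}^2 ,
\]
using the energy norm from Remark~\ref{07.08.R1}(2). Testing gives $\tfrac12 E_m'(t)=(f,\dot y_m)\le \|f(t)\|\,E_m(t)^{1/2}$. Hence $\tfrac{d}{dt}E_m^{1/2}\le \|f(t)\|$, after the usual regularization $(E_m+\varepsilon)^{1/2}$. Integrating yields the uniform bound
\[
\sup_t E_m^{1/2}\le E_m(0)^{1/2}+\|f\|_{L^1(0,T;L^2)} .
\]
This is the $L^1$-in-time version of Gronwall that suits $f\in L^1(0,T;L^2)$. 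For the bound on $\partial_{tt}y_m$ in $L^1(0,T;H^{-1}(\Omega;w))$, I test against $v=P_m v+(I-P_m)v$. The part $(I-P_m)v$ drops out, and $\|P_m v\|_{H^1_0}\le\|v\|_{H^1_0}$ by orthogonality. Therefore
\[
\|\ddot y_m\|_{H^{-1}}\le \|y_m\|_{H^1_0}+C_P^{1/2}\|f\|_{L^2},
\]
using the Poincar\'e inequality of Assumption~\ref{ass:2.2}(2) to embed $L^2$ into $H^{-1}(\Omega;w)$. Weak-$*$ compactness then gives a subsequence with $y_m\rightharpoonup^* y$ in $L^\infty(0,T;H^1_0(\Omega;w))$ and $\dot y_m\rightharpoonup^*\dot y$ in $L^\infty(0,T;L^2)$. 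I pass to the limit in the weak formulation tested against $\phi(t)\Phi_k$ with $\phi\in C_c^\infty(0,T)$, and conclude by density of $\mathrm{span}\{\Phi_k\}$ in $H^1_0(\Omega;w)$. The initial conditions are recovered by the standard integration-by-parts argument with $\phi(0)=1$, $\phi(T)=0$. The estimate passes to the limit by weak lower semicontinuity.

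Uniqueness is where I expect the main obstacle. The difference $y$ of two solutions has zero data, but $\dot y$ is only in $L^2$, so testing with $\dot y$ is not admissible. I plan the Ladyzhenskaya trick: for fixed $s$, set $v(t)=\int_t^s y(\tau)\,d\tau$ for $t<s$ and $v(t)=0$ otherwise. Testing with $v$ yields
\[
\tfrac12\|y(s)\|_{L^2}^2+\tfrac12\|v(0)\|_{H^1_0}^2=0 ,
\]
hence $y\equiv0$. Alternatively, I can expand the difference in the basis $\Phi_k$: each coefficient satisfies $\ddot c_k+\lambda_k c_k=0$ distributionally with zero data, so $c_k\equiv 0$. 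I will use the latter, since it is cleaner given the spectral decomposition, and the pairing $\langle\partial_{tt}y,\Phi_k\rangle$ is legitimate because $\Phi_k\in H^1_0(\Omega;w)$.

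For part (2), I differentiate the Galerkin system in time. The function $z_m=\dot y_m$ solves the same equation with source $\partial_t f$. Its initial data are $z_m(0)=P_m y_1\in H^1_0(\Omega;w)$ and
\[
\dot z_m(0)=P_m\bigl(f(0)-Ay_0\bigr)\in L^2 ,
\]
where $\|f(0)\|\le \|f\|_{L^1}/T+\|\partial_t f\|_{L^1}$ since $f\in W^{1,1}(0,T;L^2)$. Here I use that $P_m$ commutes with $A$ on $D(A)$, so $\|AP_my_0\|\le\|Ay_0\|$. Applying the part (1) estimate to $z_m$ bounds $\dot y_m$ in $L^\infty H^1_0$ and $\ddot y_m$ in $L^\infty L^2$. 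Then $Ay_m=f-\ddot y_m$ gives the $L^\infty(0,T;D(A))$ bound, with $\|f\|_{L^\infty L^2}$ controlled by $\|f\|_{L^1}+\|\partial_t f\|_{L^1}$. Passing to the limit as before, and invoking uniqueness from part (1), gives the stated regularity and estimate.
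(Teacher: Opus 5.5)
Your proof is correct. The paper itself gives no proof of Theorem~\ref{thm:2.12}: it is stated and then invoked (for $h$ in the proof of Theorem~\ref{09.02.T1}). The only machinery it develops for the homogeneous problem is the semigroup/cosine framework of Subsection~2.3:
Lemma~\ref{08.13.L2} (a resolvent bound plus Pazy's generation theorem for $\mathfrak{A}$ on $H^1_0(\Omega;w)\times L^2(\Omega)$), the representation $y=C(t)y_0+S(t)y_1$, and Remark~\ref{08.14.R1}, which asserts that mild and weak solutions coincide. Your Faedo--Galerkin energy argument is therefore a genuinely different route.

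The semigroup route gives time continuity for free. Duhamel's formula with $f\in L^1(0,T;L^2(\Omega))$ yields $y\in C([0,T];H^1_0(\Omega;w))\cap C^1([0,T];L^2(\Omega))$, which is what the proof of Theorem~\ref{09.02.T1} actually uses and is more than Theorem~\ref{thm:2.12} states. Part (2) then follows from classical regularity for inhomogeneous Cauchy problems with data in $D(\mathfrak{A})$ and forcing in $W^{1,1}(0,T;L^2(\Omega))$. However, that route only produces a \emph{mild} solution. Uniqueness within the weak class of Definition~\ref{def:weak} still needs an argument such as your coefficient-wise ODE uniqueness. The paper's Remark~\ref{08.14.R1} obtains that identification by appeal to Theorem~\ref{09.02.T1}, whose proof itself invokes Theorem~\ref{thm:2.12}.

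Your route works directly in the weak class and is self-contained once the eigenbasis is available. It also gives an explicit constant independent of $T$, whereas the group bound $\|\mathcal{S}(t)\|\le e^{|t|}$ recorded in Lemma~\ref{08.13.L2} would give $C\sim e^{T}$. If you also want continuity in time, the decoupling of your Galerkin system gives it cheaply:
\begin{itemize}
\item the mode energies $e_n=\dot g_n^2+\lambda_n g_n^2$ satisfy $e_n(t)^{1/2}\le e_n(0)^{1/2}+\int_0^T|(f,\Phi_n)|\,ds$;
\item the tail $\sum_{n>m}e_n(0)$ tends to zero;
\item by Minkowski's inequality, $\sum_{n>m}\bigl(\int_0^T|(f,\Phi_n)|\,ds\bigr)^2\le\bigl(\int_0^T\|(I-P_m)f\|_{L^2}\,ds\bigr)^2$, which also tends to zero.
\end{itemize}
Hence $y_m\to y$ uniformly in $t$ in the energy norm.

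Two minor points. First, the $L^1(0,T;H^{-1}(\Omega;w))$ bound on $\ddot y_m$ yields no weak compactness. So $\partial_{tt}y\in L^1(0,T;H^{-1}(\Omega;w))$ should be read off from the limit identity $\partial_{tt}y=f-Ay$ rather than from that bound. Second, in part (2) the Galerkin relation is $Ay_m=P_mf-\ddot y_m$, not $f-\ddot y_m$. This is harmless since $\|P_mf\|_{L^2}\le\|f\|_{L^2}$.
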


\subsection{Energy estimates and well-posedness with non-homogeneous Dirichlet boundary input}

We now turn to boundary inputs $y|_{\Gamma}=u(t)$ for the degenerate operator $A$. The key ingredient is a Dirichlet lifting map $D$ that solves $Ay=0$ with prescribed boundary trace, allowing us to reduce the boundary-input problem to a homogeneous one plus a controlled lift. Lemmas below establish existence and boundedness of $D$, boundary/trace regularity, and the estimates needed to prove Theorem~\ref{09.02.T1}.

Denote 
\begin{equation*}
	L^2(\Om;w^{-1})=\left\{y\in \ms{D}'(\Om)\colon \int_\Om y^2w^{-1}\df x<\iy\right\}, 
\end{equation*}
and its inner product is 
\begin{equation*}
	(y,z)_{L^2(\Om;w^{-1})}=\int_\Om yz w^{-1}\df x. 
\end{equation*}

We say that the degenerate elliptic equation 
\begin{equation}\label{07.08.3}
	\begin{cases}
		\mcA u+Vu=f-\Div \bs{f}, &\mbox{in }\Om,\\ 
		u=0, &\mbox{on }\pt\Om 
	\end{cases}
\end{equation}
has a weak solution $u\in H_0^1(\Om;w)$ for $f\in L^2(\Om)$ and $|\bs{f}|\in L^2(\Om;w^{-1})$ with $\bs{f}=(f_1,\cdots, f_N)$ and $0\leq V\in L^\iy(\Om)$ if, for each $\vp\in C_0^\iy(\Om)$, we have
\begin{equation*}
	\int_\Om (\nabla u\cdot \nabla\vp)w\df x+\int_\Om V u\vp\df x=\int_\Om f\vp\df x-\sum_{i=1}^N \int_\Om f_i\f{\pt\vp}{\pt x_i}\df x. 
\end{equation*}

\begin{lemma}\label{07.08.L1'}\label{07.08.L1}
	Under Assumption \ref{06.27.A1}, the degenerate elliptic equation \eqref{07.08.3} has a unique weak solution in $H_0^1(\Om;w)$. Moreover, if $\bs{f}=0$, then $u\in \mcD(\mcA)$ and $u\in H^2(\mcO(\Ga;\be))$. 
\end{lemma}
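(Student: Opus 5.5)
The plan is to get existence and uniqueness from the Lax--Milgram theorem on $H_0^1(\Om;w)$, and then to obtain the two regularity statements for $\bs f=0$ from the definition of $\mcD(\mcA)$ and from classical interior-to-boundary elliptic regularity near $\Ga$, where the operator is uniformly elliptic. Consider the bilinear form
\[
a(u,\vp)=\int_\Om (\nabla u\cdot\nabla\vp)\,w\,\df x+\int_\Om Vu\vp\,\df x
\]
on $H_0^1(\Om;w)$, normed by the energy norm of Remark~\ref{07.08.R1}(2). Boundedness follows from Cauchy--Schwarz, $V\in L^\iy$, and the Poincar\'e inequality of Assumption~\ref{06.27.A1}(2), which controls $\|u\|_{L^2}$. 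Coercivity holds because $V\ge0$, so $a(u,u)\ge \int_\Om|\nabla u|^2w\,\df x$.

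The right-hand side $\ell(\vp)=\int_\Om f\vp\,\df x-\sum_i\int_\Om f_i\,\pt_i\vp\,\df x$ is a bounded functional on $H_0^1(\Om;w)$. For the first term use $|\int f\vp|\le \|f\|_{L^2}\|\vp\|_{L^2}\le C_P^{1/2}\|f\|_{L^2}\|\vp\|_{H_0^1(\Om;w)}$. For the second term write $f_i\pt_i\vp=(f_iw^{-1/2})(w^{1/2}\pt_i\vp)$, so that
\[
\Big|\sum_i\int_\Om f_i\,\pt_i\vp\,\df x\Big|\le \||\bs f|\|_{L^2(\Om;w^{-1})}\|\vp\|_{H_0^1(\Om;w)}.
\]
Lax--Milgram then gives a unique $u\in H_0^1(\Om;w)$ with $a(u,\vp)=\ell(\vp)$ for all $\vp\in H_0^1(\Om;w)$. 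Since $C_0^\iy(\Om)$ is dense in $H_0^1(\Om;w)$ by definition, this is equivalent to the weak formulation tested only against $C_0^\iy$. Hence existence and uniqueness of weak solutions hold in exactly the sense defined above.

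Now let $\bs f=0$. The weak formulation gives $\mcA u=f-Vu$ in $\ms D'(\Om)$. The right-hand side lies in $L^2(\Om)$ because $V\in L^\iy$. So $u\in H^2(\Om;w)\cap H_0^1(\Om;w)=\mcD(\mcA)$, with $\|\mcA u\|_{L^2}\le \|f\|_{L^2}+\|V\|_\iy\|u\|_{L^2}$.

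For $u\in H^2(\mcO(\Ga;\be))$, use \eqref{08.13.1}. On $\mcO(\Ga;2\be)$ we have $w\in C^1$ and $w\ge\La$, so $u\in H^1(\mcO(\Ga;2\be))$ by Remark~\ref{07.08.R1}(1), with zero trace on $\Ga$. Take a cutoff $\zeta\in C^\iy(\ol\Om)$ with $\zeta=1$ on $\mcO(\Ga;\be)$ and $\supp\zeta\s\ol{\mcO(\Ga;3\be/2)}$. Then $v=\zeta u$ vanishes on $\Ga$ and on the inner boundary of $\mcO(\Ga;2\be)$, and it satisfies
\[
-\Div(w\nabla v)=\zeta(f-Vu)-2w\nabla\zeta\cdot\nabla u-u\,\Div(w\nabla\zeta)
\]
in $\mcO(\Ga;2\be)$. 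The right-hand side is in $L^2$, because $w\in C^1$ there and $u\in H^1$ there. This is a uniformly elliptic divergence-form equation with $C^1$ (hence Lipschitz) coefficients, posed on a neighborhood of the smooth boundary $\Ga$, with homogeneous Dirichlet data on $\Ga$. Standard $H^2$ boundary regularity (Gilbarg--Trudinger Thm.~8.12, or Evans \S6.3 via difference quotients in boundary-flattening charts) gives $v\in H^2$ near $\Ga$, and therefore $u\in H^2(\mcO(\Ga;\be))$.

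The main obstacle is this localization step. One must make sure that the cutoff function vanishes before the region where $w$ may degenerate, so that only the $C^1$ portion of $w$ enters. One must also check that the Lipschitz regularity of $w$ suffices for the difference-quotient argument, which it does since only $\nabla w\in L^\iy$ is needed. Finally, the inner boundary of $\mcO(\Ga;2\be)$ is not assumed smooth. This is handled by applying the regularity only in boundary charts along $\Ga$, together with interior $H^2$ regularity, on the support of $\zeta$.
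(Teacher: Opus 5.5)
Your proposal is correct and follows the same route as the paper. Existence and uniqueness come from Lax--Milgram on $H_0^1(\Om;w)$ via the weighted Poincar\'e inequality of Assumption~\ref{06.27.A1}(2); then $\mcA u=f-Vu\in L^2(\Om)$ gives $u\in\mcD(\mcA)$, and standard boundary $H^2$ regularity (Evans, \S6.3) near $\Ga$, justified by the non-degeneracy and $C^1$ regularity of $w$ in Assumption~\ref{06.27.A1}(1), gives $u\in H^2(\mcO(\Ga;\be))$ — your cutoff and localization argument just supplies details the paper leaves implicit.
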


\begin{proof}
	It is the Lax-Milgram theorem by (2) in Assumption \ref{06.27.A1}.

	If $\bs{f}=0$, then $\mcA u+Vu\in L^2(\Om)$, and $u\in H^2(\mcO(\Ga;\be))$ by (1) in Assumption \ref{06.27.A1} and the standard elliptic regularity (see Theorem 4 in Chapter 6.3 in \cite{Evans} at p.\! 317).
\end{proof}

Now, we will introduce the so-called ``Dirichlet map" in the following. 

\begin{definition}\label{08.20.D1}
	Since for each $s>\f{1}{2}$, we have the Sobolev trace 
	\begin{equation*}
		T: H^s(\Om)\ra H^{s-\f{1}{2}}(\Ga)
	\end{equation*}
	is a surjection. Now, take $s=1$, define 
	\begin{equation*}
		\|u\|_{H^\f{1}{2}(\Ga)}=\inf\left\{\|\psi\|_{H^1(\Om)}\colon \psi\in H^1(\Om), \mbox{and }  T\psi=u\right\}, 
	\end{equation*}
	where $\|\psi\|_{H^1(\Om)}=(\|\nabla u\|_{L^2(\Om)}^2+\|u\|_{L^2(\Om)}^2)^\f{1}{2}$. Then 
	\begin{equation*}
		(H^\f{1}{2}(\Ga), \|\cdot\|_{H^\f{1}{2}(\Ga)})
	\end{equation*}
	is the classical Sobolev space $H^\f{1}{2}(\Ga)$. 
\end{definition}

Now, we introduce the Dirichlet map $D$ by 
\begin{equation}\label{08.14.5}
	D\psi=y, \mbox{ with } 
	\begin{cases}
		A y=0, &\mbox{in }\Om,\\
		y=\psi, &\mbox{on }\Ga, 
	\end{cases}
\end{equation}

\begin{lemma}\label{08.20.L1}
	The operator 
	\begin{equation*}
		D: H^\f{1}{2}(\Ga)\ra H^1(\Om;w).
	\end{equation*}
	that is defined in \eqref{08.14.5} is a bounded linear operator.
	Moreover, $D: H^1(\Ga)\ra H^1(\Om;w)$ is a bounded linear operator. 
\end{lemma}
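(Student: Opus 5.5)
The plan is the standard lifting argument, arranged so that the extension of the boundary datum lives only in the collar $\mcO(\Ga;2\be)$ where $w$ is non-degenerate. Given $\psi\in H^{\f12}(\Ga)$, Definition~\ref{08.20.D1} provides $\tilde\psi\in H^1(\Om)$ with $T\tilde\psi=\psi$ and $\|\tilde\psi\|_{H^1(\Om)}\le 2\|\psi\|_{H^{\f12}(\Ga)}$.

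\textbf{Step 1 (cut-off).} Fix $\chi\in C^\iy(\ol\Om)$ with $\chi\equiv1$ on $\mcO(\Ga;\be)$, $\supp\chi\s\ol{\mcO(\Ga;2\be)}$ and $0\le\chi\le1$. Set $\Psi=\chi\tilde\psi$. Then $T\Psi=\psi$ and $\Psi\in H^1(\Om)$. Since $w\in C^1(\ol{\mcO(\Ga;2\be)})$ is bounded there,
\[
\int_\Om|\nabla\Psi|^2w\,\df x\le \|w\|_{L^\iy(\mcO(\Ga;2\be))}\|\Psi\|_{H^1(\Om)}^2 .
\]
Hence $\Psi\in\mcH^1(\Om;w)$. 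Moreover $\Psi\in H^1(\Om;w)$: approximating $\tilde\psi$ by $C^\iy(\ol\Om)$ functions in $H^1(\Om)$ and multiplying by $\chi$ gives convergence in the weighted norm. We thus get $\|\Psi\|_{H^1(\Om;w)}\le C\|\psi\|_{H^{\f12}(\Ga)}$.

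\textbf{Step 2 (correction).} We have $A\Psi=-\Div(w\nabla\Psi)=-\Div\bs f$ with $\bs f=w\nabla\Psi$, understood distributionally, and
\[
\int_\Om|\bs f|^2w^{-1}\,\df x=\int_\Om|\nabla\Psi|^2w\,\df x<\iy ,
\]
so $|\bs f|\in L^2(\Om;w^{-1})$. By Lemma~\ref{07.08.L1} with $V=0$, $f=0$ and right-hand side $+\Div\bs f$, there is a unique $z\in H_0^1(\Om;w)$ with $Az=-A\Psi$ weakly. Testing with $z$ itself (legitimate by density of $C_0^\iy$) and using Assumption~\ref{06.27.A1}(2) gives $\|z\|_{H^1_0(\Om;w)}\le\|\Psi\|_{H^1(\Om;w)}$. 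Define $D\psi:=\Psi+z$. Then $A(D\psi)=0$ weakly, and its trace is $\psi$, since $z$ has zero trace and traces agree with classical Sobolev traces by Remark~\ref{07.08.R1}(1).

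\textbf{Step 3 (well-definedness and linearity).} If two lifts are used, their difference lies in $H_0^1(\Om;w)$ and solves $Ay=0$. Uniqueness in Lemma~\ref{07.08.L1} then forces it to vanish, so $D\psi$ does not depend on the choice of $\tilde\psi$. Linearity follows from this uniqueness, and the bound $\|D\psi\|_{H^1(\Om;w)}\le C\|\psi\|_{H^{\f12}(\Ga)}$ follows from Steps 1--2. The second claim is immediate, because $\Ga$ is compact and smooth, so $H^1(\Ga)\hookrightarrow H^{\f12}(\Ga)$ continuously; composing this embedding with the first part yields boundedness $H^1(\Ga)\to H^1(\Om;w)$.

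\textbf{Main obstacle.} The delicate step is Step 1: verifying that $\Psi$ genuinely belongs to the closure space $H^1(\Om;w)$ and that its trace is meaningful. The cut-off confining all gradients to the region where $w$ is bounded above and below is what makes this work, and it is the only place Assumption~\ref{06.27.A1}(1) is used. A secondary point to check is that the $L^2$ part of the $H^1(\Om;w)$ norm of $z$ is controlled; this is exactly the weighted Poincar\'e inequality.
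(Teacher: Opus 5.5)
Your proposal is correct and follows essentially the same route as the paper. You cut off an $H^1(\Omega)$ extension so that it is supported in the non-degenerate collar $\mathcal{O}(\Gamma;2\beta)$, correct it by solving for an $H_0^1(\Omega;w)$ function via Lemma~\ref{07.08.L1} (Lax--Milgram plus the weighted Poincar\'e inequality), and add the two; you also make explicit several points the paper leaves implicit: the divergence-form right-hand side, independence of the chosen lift, linearity, and the $H^1(\Gamma)$ case via $H^1(\Gamma)\hookrightarrow H^{1/2}(\Gamma)$.
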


\begin{proof} 
	It is a well-known result of elliptic theory and  \eqref{08.13.1}, there exists $\wt \psi\in H^1(\Om)$ such that $\|\wt \psi\|_{H^1(\Om)}\leq 2\|\psi\|_{H^\f{1}{2}(\Ga)}$. Choosing $\zeta\in C^\iy(\R^N)$ with 
	\begin{equation*}
		\zeta=1 \mbox{ on } \ol{\mcO(\Ga;\be)}, \quad \zeta=0 \mbox{ on } \Om\se \mcO(\Ga; 2\be), \mbox{ and } |\nabla \zeta|\leq C\be^{-1}. 
	\end{equation*} 
	Let $\wh \psi=\zeta \wt \psi$. Then  $\supp \wh \psi\s \ol{\mcO(\Ga; 2\be)}$ such that $\wh \psi=\psi$ on $\Ga$ in the sense of trace and $\|\wh \psi\|_{H^1(\Om)}\leq C\|\wt \psi\|_{H^1(\Om)}$. Note that  the equation 
	\begin{equation*}
		\begin{cases}
			\mcA \wh y=-A \wh \psi, &\mbox{in }\Om,\\
			\wh y=0, &\mbox{on }\Ga
		\end{cases}
	\end{equation*}
	has a unique solution $\wh y\in H_0^1(\Om;w)$ by Lemma \ref{07.08.L1}, and 
	\begin{equation*}
		\int_\Om |\nabla \wh y|^2w\df x\leq C\int_\Om |\nabla \wh \psi|^2\df x, 
	\end{equation*}
	where the constant $C>0$ depends only on $\sup_{x\in \Om}w, \be$ and $\Om$. 
	Taking $y=\wh y+\wh \psi$, then $D\psi=y$, and 
	\begin{equation}\label{08.15.1}
		\begin{split} 
			\int_\Om |\nabla y|^2w\df x+\int_\Om y^2\df x 
			&\leq C\int_\Om |\nabla \wh y|^2w\df x+C\int_\Om |\nabla \wh \psi|^2\df x+2\int_\Om |\wh \psi|^2\df x\\
			&\leq C\|\wh \psi\|_{H^1(\Om)}^2\leq C\|\psi\|_{H^\f{1}{2}(\Ga)}^2
		\end{split} 
	\end{equation}
	according to (2) in Assumption \ref{06.27.A1}, where the constants $C>0$ depends only on the constant in the Poincar\'e inequality, $\sup_{x\in \Om}w, \be$ and $\Om$. 
\end{proof}

\begin{lemma}\label{09.11.L1}
	Under Assumption \ref{06.27.A1}, the degenerate elliptic equation \begin{equation}\label{09.11.1}
		\begin{cases}
			Av=f, &\mbox{in }\Om,\\
			v=\psi, &\mbox{on }\pt\Ga, 
		\end{cases}
	\end{equation}
	has a weak solution in $H^1(\Om;w)$, where $f\in H^{-1}(\Om;w)$ and $\psi\in H^1(\Ga)$. Moreover,  
	\begin{equation}\label{09.12.4}
		\|v\|_{H^1(\Om;w)}\leq C\left(\|\psi\|_{H^1(\Ga)}+\|f\|_{H^{-1}(\Om;w)}\right). 
	\end{equation}
	Furthermore, if $f\in L^2(\Om)$, then  $v\in \mcD(A)$ and $v\in H^2(\mcO(\Ga;\be))$. 
\end{lemma}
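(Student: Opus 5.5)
We reduce \eqref{09.11.1} to a homogeneous problem by subtracting a lifting of the boundary datum, in the same way as in the proof of Lemma~\ref{08.20.L1}. Since $H^1(\Ga)\hookrightarrow H^{\f12}(\Ga)$ continuously, $\|\psi\|_{H^{\f12}(\Ga)}\le C\|\psi\|_{H^1(\Ga)}$. Lemma~\ref{08.20.L1} then gives $D\psi\in H^1(\Om;w)$ with $AD\psi=0$, $D\psi|_\Ga=\psi$, and
\[
\|D\psi\|_{H^1(\Om;w)}\le C\|\psi\|_{H^1(\Ga)}.
\]
We seek $v=D\psi+z$, where $z\in H_0^1(\Om;w)$ solves $Az=f$ in $\Om$ and $z=0$ on $\Ga$. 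In weak form this reads
\[
\int_\Om w\nabla z\cdot\nabla\vp\,\df x=\langle f,\vp\rangle_{H^{-1}(\Om;w),H_0^1(\Om;w)}\quad\text{for all }\vp\in H_0^1(\Om;w).
\]

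\textbf{Solving for $z$.} The bilinear form on the left is bounded on $H_0^1(\Om;w)$. It is coercive because of the Poincar\'e inequality in Assumption~\ref{06.27.A1}(2), or directly because we use the energy norm of Remark~\ref{07.08.R1}(2). The right-hand side is a bounded functional, since $f\in H^{-1}(\Om;w)$ is by definition the dual of $H_0^1(\Om;w)$. Lax--Milgram, which is the argument behind Lemma~\ref{07.08.L1} with $V=0$ and with $f-\Div\bs f$ replaced by a general element of the dual, gives a unique $z$ with $\|z\|_{H_0^1(\Om;w)}\le C\|f\|_{H^{-1}(\Om;w)}$.

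\textbf{The estimate \eqref{09.12.4}.} We have $\|z\|_{L^2}\le C_P^{1/2}\|z\|_{H^1_0(\Om;w)}$. Adding this to the bound for $D\psi$ gives \eqref{09.12.4} for $v=D\psi+z$. Moreover $Av=AD\psi+Az=f$ in the distributional sense, and $v|_\Ga=\psi$, because $z$ has zero trace by Remark~\ref{07.08.R1}(1).

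\textbf{Regularity when $f\in L^2(\Om)$.} Lemma~\ref{07.08.L1} with $\bs f=0$ and $V=0$ gives $z\in\mcD(A)$ and $z\in H^2(\mcO(\Ga;\be))$. For $D\psi$ we have $AD\psi=0\in L^2(\Om)$ and $D\psi\in H^1(\Om;w)$, so $D\psi\in H^2(\Om;w)$ and hence $v\in H^2(\Om;w)$; this is how we read ``$v\in\mcD(A)$'' here, since $v$ has nonzero trace.

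\textbf{Main obstacle: $D\psi\in H^2(\mcO(\Ga;\be))$.} This is the step I expect to be hardest. On $\mcO(\Ga;2\be)$ the operator $A$ is uniformly elliptic with $C^1$ coefficients by \eqref{08.13.1}, and $D\psi\in H^1(\mcO(\Ga;2\be))$. The difficulty is that boundary $H^2$ regularity needs trace data in $H^{3/2}(\Ga)$, whereas we only have $\psi\in H^1(\Ga)$. The first thing I would try is to localize with the cutoff $\zeta$ from Lemma~\ref{08.20.L1}. The function $\zeta D\psi$ solves a uniformly elliptic Dirichlet problem on the collar, with right-hand side involving $\nabla\zeta\cdot w\nabla D\psi$ and $D\psi$, both in $L^2$, and with boundary trace $\psi$. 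Interior regularity handles $\{\be/2<\dist(x,\Ga)<2\be\}$. Near $\Ga$, I would invoke Evans, Theorem~6.3.4, after subtracting an $H^2$ extension of $\psi$. This works only if $\psi$ has $H^{3/2}$ regularity, so I would either impose that extra regularity in this last assertion, or settle for $H^{3/2}(\mcO(\Ga;\be))$, which is the natural regularity for $H^1(\Ga)$ trace data.
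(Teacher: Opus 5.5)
Your argument is the paper's: $v=z+D\psi$, with $z\in H_0^1(\Om;w)$ obtained from Lemma~\ref{07.08.L1} (Lax--Milgram) and the bound on $D\psi$ taken from \eqref{08.15.1}. Your treatment of existence and of \eqref{09.12.4} is complete.

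The paper's proof ends after the estimate and never addresses the ``furthermore'' clause. Your diagnosis of that clause is correct. Since $\mcD(A)\s H_0^1(\Om;w)$, the claim $v\in\mcD(A)$ forces $\psi=0$, so $v\in H^2(\Om;w)$ is the only tenable reading, and you prove it. For $N\ge 2$ the claim $v\in H^2(\mcO(\Ga;\be))$ is in fact false for general $\psi\in H^1(\Ga)$. An $H^2$ function near $\Ga$ has trace in $H^{3/2}(\Ga)$, so taking $f=0$ and $\psi\in H^1(\Ga)\se H^{3/2}(\Ga)$ gives a counterexample.

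Your first fallback does go through. For $\psi\in H^{3/2}(\Ga)$:
\begin{enumerate}
\item Take an extension $\Psi\in H^2(\Om)$ of $\psi$.
\item Note that $A(\zeta\Psi)\in L^2(\Om)$, because $\zeta\Psi$ is supported in $\ol{\mcO(\Ga;2\be)}$, where $w\in C^1$.
\item Solve $A\tilde z=f-A(\zeta\Psi)$ in $H_0^1(\Om;w)$ by Lemma~\ref{07.08.L1}, which also gives $\tilde z\in H^2(\mcO(\Ga;\be))$.
\end{enumerate}
Then $\tilde z+\zeta\Psi$ solves \eqref{09.11.1} and lies in $H^2(\mcO(\Ga;\be))$. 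It coincides with $z+D\psi$, since their difference is an $H_0^1(\Om;w)$ solution of the homogeneous problem.

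So the step you could not close is a defect of the statement, not a gap in your proposal.
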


\begin{proof}
	From Lemma \ref{07.08.L1'}, let $z\in H_0^1(\Om;w)$ satisfies the equation
	\begin{equation*}
		\begin{cases}
			\mcA z=f, &\mbox{in }\Om,\\
			z=0, &\mbox{on }\Ga, 
		\end{cases}
	\end{equation*}
	take $v=z+D\psi$, then   $v$ is the solution of \eqref{09.11.1}, where  $D\psi$ is defined in \eqref{08.14.5}.    Moreover, we have the estimate \eqref{09.12.4} from  Lemma \ref{07.08.L1'} and \eqref{08.15.1}. 
\end{proof}

In this subsection, we estimate the solution of \eqref{09.02.1} with non-homogeneous boundary condition. This follows the ideas of \cite{Bellassoued,Lasiecka1,Lasiecka2,Lions} with minor modifications.

We consider the following equation
\begin{equation}\label{09.02.1}
	\begin{cases}
		\pt_{tt}y+A y=f, &\mbox{in }Q,\\
		y(0)=y_0, \pt_ty(0)=y_1, &\mbox{in }\Om,\\
		y=u, &\mbox{on }\Sigma, 
	\end{cases}
\end{equation}
where $f, y_0, y_1, u$ will be given later.

\begin{theorem}\label{09.02.T1}
	Under Assumption \ref{06.27.A1}. Suppose that 
	\begin{equation*}
		f\in L^1(0,T; L^2(\Om)),\quad y_0\in H_0^1(\Om;w),\quad y_1\in L^2(\Om), \mbox{ and } u\in H^1(\Sigma). 
	\end{equation*} 
	Let $y$ be the solution of \eqref{09.02.1}. 	
	Then the solution
	\begin{equation*}
		y\in C([0,T]; H^1(\Om;w))\cap C^1([0,T]; L^2(\Om))
	\end{equation*}
	of \eqref{09.02.1} satisfies: there exists a constant $C>0$, depends only on $w, T$ and $\Om$,  such that 
	\begin{equation}\label{09.04.1}
		\begin{split} 
		&\|y(t)\|_{C([0,T];H^1(\Om;w))}+\|\pt_t y(t)\|_{C([0,T]; L^2(\Om))}\\
		&\leq C\left(\|u\|_{H^1(\Sigma)}+\|y_0\|_{H_0^1(\Om;w)}+\|y_1\|_{L^2(\Om)}+\|f\|_{L^1(0,T; L^2(\Om))}\right).
		\end{split} 
	\end{equation}
	Furthermore, we have 
	\begin{equation*}
		\f{\pt y}{\pt \nu_\mcA}\in L^2(\Sigma), 
	\end{equation*}
	and there exists a constant $C=C(T,\Om)>0$, such that 
	\begin{equation}\label{09.04.2}
		\|\pt_{\nu_\mcA} y\|_{L^2(\Sigma)}\leq C\left(\|u\|_{H^1(\Sigma)}+\|y_0\|_{H^1(\Om;w)}+\|y_1\|_{L^2(\Om)}+\|f\|_{L^1(0,T; L^2(\Om))}\right).
	\end{equation}
\end{theorem}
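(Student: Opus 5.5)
The plan is to lift the boundary data with the Dirichlet map of Lemma~\ref{08.20.L1}, reduce to the homogeneous problem of Theorem~\ref{thm:2.12}, and get the conormal trace bound from a multiplier argument localized in the collar $\mcO(\Ga;2\be)$, where $w\geq\La$.

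\textbf{Step 1 (reduction).} Set $z(t)=y(t)-(Du)(t)$, where $(Du)(t)=D(u(\cdot,t))$. By Lemma~\ref{08.20.L1}, since $u\in H^1(\Sigma)\s H^1(0,T;L^2(\Ga))\cap L^2(0,T;H^1(\Ga))$, we have $Du\in L^2(0,T;H^1(\Om;w))\cap H^1(0,T;H^1(\Om;w))$. Because $ADu=0$, formally
\[
\pt_{tt}z+Az=f-D\pt_{tt}u,\qquad z|_\Sigma=0,
\]
with data $z(0)=y_0-Du(0)$ and $\pt_tz(0)=y_1-D\pt_tu(0)$. The compatibility condition (4) of Assumption~\ref{06.27.A1} gives $u(0)=0$, hence $z(0)=y_0\in H_0^1(\Om;w)$.

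The term $D\pt_{tt}u$ is not in $L^1(0,T;L^2)$ when $u$ is only $H^1$ in time. So I would not apply Theorem~\ref{thm:2.12} literally. Instead I would write the solution in the cosine/sine operator form (variation of parameters) with $C(t)=\cos(A^{1/2}t)$ and $S(t)=A^{-1/2}\sin(A^{1/2}t)$, and integrate by parts once in time:
\[
\int_0^tS(t-s)D\pt_{tt}u(s)\,ds=S(0)\ldots-\,C(t-s)\ldots=\Big[S(t-s)D\pt_su(s)\Big]_0^t+\int_0^tC(t-s)D\pt_su(s)\,ds.
\]
Each term is then bounded in $H_0^1(\Om;w)\ts L^2$ using $\|D\pt_su\|_{L^2(\Om)}\le C\|\pt_su\|_{H^{1/2}(\Ga)}$. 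The weakest point is the boundary term at $s=t$, which needs $\pt_tu(t)$ pointwise in time. I expect to handle it by first proving the estimate for smooth $u$ and then passing to the limit by density. I would also use the identity $AD=0$ in the extrapolated sense, $y=-A\int S(t-s)\ldots$, following Lasiecka--Lions--Triggiani. Combining this with the energy estimate of Theorem~\ref{thm:2.12} for the $f,y_0,y_1$ parts yields \eqref{09.04.1}, with $Du\in C([0,T];H^1(\Om;w))$ and continuity in time of $z$.

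\textbf{Step 2 (normal trace).} This is the main obstacle. The degeneracy prevents a global multiplier argument, so I would localize. Take the cutoff $\zeta$ from the proof of Lemma~\ref{08.20.L1} and a smooth field $h$ extending $\nu$ near $\Ga$. For smooth data, apply the multiplier $\zeta h\cdot\nabla y$ on $Q$. Since $w\in C^1$ and $w\geq\La$ on $\mcO(\Ga;2\be)$, the standard Rellich--Lions computation is valid there. It gives $\tfrac12\int_\Sigma w|\pt_\nu y|^2$ on the left. On the right it produces interior terms supported in $\mcO(\Ga;2\be)$, bounded by the energy of Step~1 times $T$; boundary terms involving the tangential gradient $|\nabla_\Ga u|^2$ and $|\pt_tu|^2$, bounded by $\|u\|_{H^1(\Sigma)}^2$; time-endpoint terms $\int_\Om\pt_ty\,\zeta h\cdot\nabla y$, bounded by the energy; and $\int_Q f\zeta h\cdot\nabla y\le \|f\|_{L^1L^2}\sup_t\|\nabla y\|_{L^2(\mcO)}$. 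Because $w\ge\La$ there, the unweighted gradient on the collar is controlled by the weighted energy. Interior $H^2(\mcO(\Ga;\be))$ regularity from Lemma~\ref{07.08.L1} justifies the computation for regular data, and density extends \eqref{09.04.2} to the stated class.
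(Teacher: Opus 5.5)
Your Step~2 is essentially the paper's localized multiplier (Lemma~\ref{09.04.L3}) extended to $u\neq 0$, and it is fine once an energy bound is available. The gap is in Step~1, on which Step~2 explicitly relies.

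First, $u\in H^1(\Sigma)$ only gives $\pt_tu\in L^2(\Sigma)$. So $D\pt_tu$ is not covered by Lemma~\ref{08.20.L1}, which needs $H^{1/2}(\Ga)$ data, and the claim $Du\in H^1(0,T;H^1(\Om;w))$ is false.

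More seriously, your formula does not give energy-level bounds even if you grant $D\pt_su(s)\in L^2(\Om)$. Carry out your integration by parts. The endpoint $s=t$ vanishes because $S(0)=0$, so it is not the weak point. The endpoint $s=0$ cancels the $-S(t)D\pt_tu(0)$ coming from $\pt_tz(0)$. You get
\[
y(t)=C(t)y_0+S(t)y_1+\int_0^tS(t-s)f(s)\,\df s+Du(t)-\int_0^tC(t-s)D\pt_su(s)\,\df s,
\]
and after differentiating, $\pt_ty(t)$ contains $(L\pt_tu)(t)=\mcA\int_0^tS(t-s)D\pt_su(s)\,\df s$.

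Since $D\pt_su(s)$ has nonzero trace, it is not in $\mcD(\mcA^{1/2})=H_0^1(\Om;w)$. Hence $C(t-s)$ does not carry it into $H_0^1(\Om;w)$, and $\mcA S(t-s)$ maps it only into $H^{-1}(\Om;w)$. An $L^2$ bound on $D\pt_su$ therefore controls the last term of $y$ only in $L^2(\Om)$, and controls $L\pt_tu$ only in $H^{-1}(\Om;w)$. The bounds you actually need are
\[
\Big\|\mcA^{1/2}\int_0^tC(t-s)Dg(s)\,\df s\Big\|_{L^2(\Om)}+\|(Lg)(t)\|_{L^2(\Om)}\le C\|g\|_{L^2(\Sigma)}.
\]
These are hidden-regularity statements: they follow by duality from the conormal trace estimate for the homogeneous problem, and are essentially equivalent to it. Approximating $u$ by smooth functions does not help, because the issue is the size of the constant, not the justification of the computation.

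This duality is the missing idea, and it is what the paper uses. The paper splits $y$ into an $f$-driven part with zero boundary data and a part $z$ with $f=0$. Lemma~\ref{09.04.L3} is transposed into Lemma~\ref{09.04.L1}: boundary data in $L^2(\Sigma)$ give solutions in $C([0,T];L^2(\Om))\cap C^1([0,T];H^{-1}(\Om;w))$. This is applied to $\pt_tz$, with boundary datum $\pt_tu$ and initial data $(y_1,-\mcA y_0)$. Then $z(t)\in H^1(\Om;w)$ is recovered from $\mcA z(t)=-\pt_{tt}z(t)\in H^{-1}(\Om;w)$ and $z(t)|_\Ga=u(t)$ via Lemma~\ref{09.11.L1}.

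Alternatively, you can close your own argument without the cosine calculus. For smooth compatible data, use the energy identity
\[
\f{\df}{\df t}\,\f12\Big(\|\pt_ty\|_{L^2(\Om)}^2+\int_\Om w|\nabla y|^2\,\df x\Big)=\int_\Om f\,\pt_ty\,\df x+\int_\Ga \pt_{\nu_\mcA}y\,\pt_tu\,\df S .
\]
Call the quantity in parentheses $2E(t)$. Combine this identity with your Step~2 bound $\|\pt_{\nu_\mcA}y\|_{L^2(\Sigma)}^2\le C_T\big(\sup_tE(t)+\|u\|_{H^1(\Sigma)}^2+\|f\|_{L^1(0,T;L^2(\Om))}^2\big)$. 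Absorb the boundary term with Young's inequality and pass to the limit. This yields \eqref{09.04.1} and \eqref{09.04.2} together.

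Your Step~2 also supplies the conormal estimate for the boundary-driven part. The paper's proof records that estimate only for the component with $u=0$.
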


\begin{lemma}\label{09.04.L5}
	There exists a smooth vector field $\bs{n}$ such that 
	\begin{equation*}
		\bs{n}(x)=\nu(x) \mbox{ for } x\in \pt\Om,  \mbox{ and } |\bs{n}(x)|\leq 1 \mbox{ for } x\in \Om, 
	\end{equation*}
	where $\nu$ is the unit outward normal vector to $\pt\Om$. 
\end{lemma}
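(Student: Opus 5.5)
The plan is to build $\bs{n}$ from the signed distance function to $\Ga$ and a cutoff. Since $\Om$ is bounded with smooth boundary $\Ga$, there is $\delta_0>0$ (standard tubular neighborhood theorem) such that the distance function $d(x)=\dist(x,\Ga)$ is smooth on the closed collar $\ol{\mcO(\Ga;\delta_0)}$. Moreover, each $x$ in the collar has a unique nearest point $\pi(x)\in\Ga$, and
\[
\nabla d(x)=-\nu(\pi(x)), \qquad |\nabla d(x)|=1 .
\]
In particular, on $\Ga$ we have $-\nabla d=\nu$. If one prefers, $\delta_0$ can be taken $\le\be$, though this is not needed.

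Next choose a cutoff $\eta\in C^\iy(\R)$ with $0\le\eta\le 1$, $\eta(s)=1$ for $s\le\delta_0/3$, and $\eta(s)=0$ for $s\ge 2\delta_0/3$. Define
\[
\bs{n}(x)=-\eta(d(x))\,\nabla d(x) \quad \text{for } x\in\ol{\mcO(\Ga;\delta_0)}, \qquad \bs{n}(x)=0 \quad \text{otherwise in } \ol\Om .
\]
Three properties then need checking:
\begin{enumerate}
\item Smoothness: inside the collar, $\bs{n}$ is a product of smooth functions. It vanishes identically on $\{d>2\delta_0/3\}$, so it glues smoothly with the zero field across $\{d=\delta_0\}$. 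Since all derivatives are bounded up to $\Ga$, $\bs{n}\in C^\iy(\ol\Om;\R^N)$, and it can be extended to $\R^N$ by Whitney extension if needed.
\item Boundary values: on $\Ga$ we have $d=0$, so $\eta=1$ and $\bs{n}=-\nabla d=\nu$.
\item Bound: $|\bs{n}(x)|=\eta(d(x))\,|\nabla d(x)|\le 1$.
\end{enumerate}

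The only nontrivial point is the smoothness of $d$ near $\Ga$, together with the identity $\nabla d=-\nu\circ\pi$. For this I would cite the classical result (e.g. Gilbarg--Trudinger, Lemma 14.16): for a $C^k$ boundary with $k\ge2$, $d\in C^k$ in a collar of width $\mu$ less than the reciprocal of the maximal principal curvature. An alternative route avoids the distance function altogether. Extend $\nu$ locally via boundary charts, patch the pieces with a partition of unity, and normalize by replacing $V$ with $V/\max(1,|V|)$ smoothed. However, this normalization step is awkward for smoothness, so the distance-function construction is the cleaner choice.
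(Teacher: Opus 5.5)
Your construction is correct, and it takes a different route from the paper, which gives no argument of its own and simply cites Lemma 2.3 of \cite{Bellassoued}.

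Take $\delta_0$ strictly below the width $\mu$ in Gilbarg--Trudinger's Lemma 14.16. Then $\bs n=-\eta(d)\nabla d$ has the three required properties:
\begin{itemize}
\item it is smooth on $\ol\Om$, since it agrees with the zero field on the relatively open set $\{d>2\delta_0/3\}$;
\item it equals $\nu$ on $\Ga$, because the up-to-the-boundary gradient of $d$ is the inner normal;
\item it satisfies $|\bs n|\le 1$, since $|\nabla d|=1$.
\end{itemize}

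Your proof replaces a black-box reference with an explicit construction whose only external input is the regularity of the distance function near a smooth boundary. The payoff shows up in the proof of Lemma~\ref{09.04.L3}, where the paper needs extra properties of $\bs n$ and gets them by an additional cutoff. In your construction, choosing also $\delta_0\le\be$ already gives $\supp\bs n\subseteq\ol{\mcO(\Ga;\be)}$. Moreover, $|\nabla d|=1$ yields the explicit formula $\Div\bs n=-\eta'(d)-\eta(d)\Delta d$, hence $|\Div\bs n|\le C\delta_0^{-1}+\sup|\Delta d|$. Here $\Delta d$ is controlled by the principal curvatures of $\Ga$, so that constant depends on $\Om$ and not only on $\be$.

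Your concern about the partition-of-unity alternative is unfounded. Take local extensions $\nu_i$ with $|\nu_i|\le 1$, for example normalized gradients of local defining functions. Take cutoffs $\phi_i\ge 0$ with $\sum_i\phi_i\le 1$ everywhere and $\sum_i\phi_i=1$ on $\Ga$. Then $V=\sum_i\phi_i\nu_i$ satisfies $|V|\le 1$ by convexity and $V=\nu$ on $\Ga$, so no normalization step is needed.
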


\begin{proof}
	This is Lemma 2.3 (p.\! 61) in Chapter 2.6 in \cite{Bellassoued}. 
\end{proof}

\begin{lemma}\label{09.04.L3}
	Under the conditions in Theorem \ref{09.02.T1} with $u=0$ on $\Sigma$. Then the mapping 
	\begin{equation*}
		H_0^1(\Om;w)\ts L^2(\Om)\ts L^1(0,T; L^2(\Om))\ra L^2(\Sigma), \ (y_0,y_1,f)\mapsto \f{\pt y}{\pt\nu_\mcA}
	\end{equation*}
	is a continuous linear operator, where $y$ is the solution of \eqref{09.02.1} with $u=0$ on $\Sigma$. Furthermore, there exists a constant $C>0$ such that 
	\begin{equation}\label{09.04.6}
		\|\pt_{\nu_\mcA} y\|_{L^2(\Sigma)}\leq C \left(\|y_0\|_{H_0^1(\Om;w)}+\|y_1\|_{L^2(\Om)}+\|f\|_{L^1(0,T; L^2(\Om))}\right).
	\end{equation}
\end{lemma}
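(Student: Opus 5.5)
We prove \eqref{09.04.6} by the classical multiplier method with the vector field $\bs{n}$ of Lemma~\ref{09.04.L5}, localized near $\Ga$ where the operator is uniformly elliptic. By density and the continuity in Theorem~\ref{thm:2.12}, it suffices to treat smooth data: $y_0\in D(A)$, $y_1\in H_0^1(\Om;w)$, and $f$ with $\pt_t f\in L^1(0,T;L^2(\Om))$. For such data Theorem~\ref{thm:2.12}(2) gives $y\in L^\iy(0,T;D(A))$. Lemma~\ref{07.08.L1'}, applied to $Ay(t)=f(t)-\pt_{tt}y(t)\in L^2(\Om)$, gives $y(t)\in H^2(\mcO(\Ga;\be))$. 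Hence $\pt_{\nu_\mcA}y=w\,\pt_\nu y$ is a genuine $L^2$ trace, and all integrations by parts below are justified. Once the estimate holds for smooth data, the map extends by continuity to $H_0^1(\Om;w)\ts L^2(\Om)\ts L^1(0,T;L^2(\Om))$, which gives the linearity and continuity claims.

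Choose the cutoff $\zeta$ from the proof of Lemma~\ref{08.20.L1}: $\zeta=1$ on $\mcO(\Ga;\be)$ and $\supp\zeta\s\ol{\mcO(\Ga;2\be)}$. Set $\bs{h}=\zeta\bs{n}$, which is smooth, supported where $w\ge\La$ and $w\in C^1$, and equal to $\nu$ on $\Ga$. Multiply the equation by $\bs{h}\cdot\nabla y$ and integrate over $Q$. Three terms arise.
\begin{itemize}
\item The time term gives $\big[\int_\Om \pt_t y\,\bs{h}\cdot\nabla y\big]_0^T$ plus $\f12\int_Q \Div(\bs{h})|\pt_t y|^2$. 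The boundary contribution $\f12\int_\Sigma|\pt_t y|^2\bs{h}\cdot\nu$ vanishes because $y=0$ on $\Sigma$.
\item The elliptic term gives, after Rellich-type integration by parts, $\f12\int_\Sigma w|\pt_\nu y|^2$. This uses $\nabla y=(\pt_\nu y)\nu$ on $\Sigma$, since the tangential derivatives vanish. The remaining interior terms involve $\nabla\bs{h}$, $\nabla w$ and $w$ on $\supp\bs{h}$, each bounded by $C\int_Q w|\nabla y|^2$ because $w\ge\La$ and $w\in C^1$ there.
\item The source term $\int_Q f\,\bs{h}\cdot\nabla y$ is bounded by $C\|f\|_{L^1(0,T;L^2)}\sup_t\|\nabla y(t)\|_{L^2(\supp \bs{h})}$, and $\|\nabla y\|_{L^2(\supp\bs{h})}\le \La^{-1/2}\|y\|_{H_0^1(\Om;w)}$.
\end{itemize}

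Collecting these and using $w\ge\La$ on $\Ga$, we get
\[
\La\int_\Sigma|\pt_\nu y|^2\le C\Big(\sup_t E(t)+ T\sup_t E(t)+\|f\|_{L^1(0,T;L^2)}\sup_t E(t)^{1/2}\Big),
\qquad E(t)=\|y(t)\|_{H_0^1(\Om;w)}^2+\|\pt_t y(t)\|_{L^2}^2.
\]
The energy estimate of Theorem~\ref{thm:2.12}(1) bounds $\sup_tE(t)$ by the squared data norm. Since $|\pt_{\nu_\mcA}y|\le(\sup w)|\pt_\nu y|$, this yields \eqref{09.04.6}.

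The main obstacle is justifying the Rellich identity when only the weighted operator is available. The key is that $\bs{h}$ is supported in $\ol{\mcO(\Ga;2\be)}$. There we need $H^2$ regularity up to $\supp\bs{h}$, not just on $\mcO(\Ga;\be)$. If Lemma~\ref{07.08.L1'} only gives $H^2(\mcO(\Ga;\be))$, we shrink the support of $\zeta$ to lie inside $\mcO(\Ga;\be)$ and take $\zeta=1$ near $\Ga$; then the cutoff gradient terms are supported in $\mcO(\Ga;\be)$, where $y\in H^2$, and they are controlled by the energy. The degeneracy of $w$ in the interior never enters.
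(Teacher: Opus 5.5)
Your proposal is correct and follows essentially the same route as the paper: density reduction to data in $D(A)\ts H_0^1(\Om;w)$ with $\pt_t f\in L^1(0,T;L^2(\Om))$, the multiplier $\bs{n}\cdot\nabla y$ with $\bs{n}$ cut off to $\ol{\mcO(\Ga;\be)}$ where $w\ge\La$ and $w\in C^1$, and the Rellich identity. That identity produces the boundary term $-\f12\int_\Sigma w|\pt_\nu y|^2$; your sign is off, but this does not matter since only absolute values are used. Your version is also slightly more careful than the paper's, since you take the energy bound from Theorem~\ref{thm:2.12} rather than from \eqref{09.04.1} and you justify the $H^2$ regularity near $\Ga$ and bound the source term explicitly.
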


\begin{proof}
	By the density argument, it suffices to prove \eqref{09.04.6} for $y_0\in \mcD(A), y_1\in H_0^1(\Om;w)$ and $f\in L^1(0,T;L^2(\Om))$ with $\pt_t f\in L^1(0,T; L^2(\Om))$. By Lemma \ref{09.04.L5}, there exists a smooth vector field $\bs{n}$ on $\Om$ such that 
	\begin{equation*}
		\bs{n}(x)=\nu(x) \mbox{ for } x\in\pt\Om,  \mbox{ and } \supp \bs{n}\s \ol{\mcO({\Ga;\be})},
	\end{equation*}
	and 
	\begin{equation*}
		|\bs{n}(x)|\leq 1 \mbox{ for } x\in \Om,\mbox{ and } |\Div \bs{n}(x)|\leq C\be^{-1} \mbox{ for } x\in \Om, 
	\end{equation*}
	where the constant $C>0$ is an absolute constant, for otherwise, we multiply $\bs{n}$ by a cut-off function $\zeta\in C_0^\iy(\R^N)$ with $\zeta=1$ on $\mcO(\Ga;\f{1}{2}\be)$ and $\zeta=0$ on $\Om\se \mcO(\Ga;\be)$, and then $\zeta\bs{n}$ is the desired function. 
	Multiplying $\bs{n}\cdot \nabla y$ on the both sides of \eqref{09.02.1}, integrating on $Q$, we have 
	\begin{equation*}
		\begin{split}
			I:=\iint_Q f(\bs{n}\cdot\nabla y)\df x\df t=\iint_Q (\pt_{tt}y)(\bs{n}\cdot \nabla y)\df x\df t-\iint_Q \Div(w\nabla y) (\bs{n}\cdot \nabla y)\df x\df t=:I_1+I_2. 
		\end{split}
	\end{equation*}
	
	Integrating by parts, remembering $\supp \bs{n}\s \ol{\mcO(\Ga;\be)}$, we have
	\begin{equation*}
		\begin{split}
			I_1
			&=\int_\Om (\pt_ty)(\bs n\cdot \nabla y)\df x\bigg|_{t=0}^{t=T}-\f{1}{2}\iint_Q \bs{n}\cdot \nabla |\pt_ty|^2\df x\df t\\
			&=\int_\Om (\pt_ty)(\bs n\cdot \nabla y)\df x\bigg|_{t=0}^{t=T}-\f{1}{2}\iint_\Sigma |\pt_ty|^2\df S\df t+\f{1}{2}\iint_Q |\pt_ty|^2\Div\bs{n}\df x\df t, 
		\end{split}
	\end{equation*}
	and then, by $\pt_ty=0$ on $\Sigma$ and \eqref{09.04.1}, we have 
	\begin{equation}\label{09.05.1}
		\begin{split}
			|I_1|
			&\leq C\left(\|y_0\|_{H_0^1(\Om;w)}^2+\|y_1\|_{L^2(\Om)}^2+\|f\|_{L^1(0,T; L^2(\Om))}^2\right). 
		\end{split}
	\end{equation}
	
	Integrating by parts, and $y=0$ on $\Sigma$, we have 
	\begin{equation*}
		\begin{split}
			I_2
			&=-\iint_\Sigma \f{1}{w}|\pt_{\nu_\mcA} y|^2\df S\df t+\iint_Q w\nabla y\cdot \nabla (\bs{n}\cdot \nabla y)\df x\df t\\
			&=-\f{1}{2}\iint_\Sigma \f{1}{w}|\pt_{\nu_\mcA} y|^2\df S\df t+\iint_Q w \left(\nabla y\cdot [(D\bs{n})\nabla y]\right) \df x\df t\\
			&\hspace{4.5mm}-\f{1}{2}\iint_Q (\bs{n}\cdot \nabla w)|\nabla y|^2\df x\df t-\f{1}{2}\iint_Q w|\nabla y|^2\Div \bs{n}\df x\df t. 
		\end{split}
	\end{equation*}
	Combining \eqref{09.05.1} and $I=I_1+I_2$, and using $w\geq \Lambda$ on $\Gamma$, we obtain 
	\begin{equation*}
		\begin{split}
			\iint_\Sigma |\pt_{\nu_\mcA} y|^2\df S\df t
			&\leq C\left(\|y_0\|_{H_0^1(\Om;w)}+\|y_1\|_{L^2(\Om)}+\|f\|_{L^1(0,T;L^2(\Om))}\right)^2,
		\end{split}
	\end{equation*}
	where the constant $C>0$ depends only on $\bs{n}, w, \La, \be, T$ and $\Om$ and the constant in (2) in Assumption \ref{06.27.A1}. 
\end{proof}

\begin{definition}\label{09.04.D1}
	Under Assumption \ref{06.27.A1}, assume that 
	\begin{equation*}
		f=0 \mbox{ in }Q, \quad y_0\in L^2(\Om), \quad y_1\in H^{-1}(\Om;w), \mbox{ and } u\in L^2(\Sigma). 
	\end{equation*}
	We call 
	\begin{equation*}
		y\in C([0,T]; L^2(\Om)), \mbox{ with } \pt_ty \in C([0,T]; H^{-1}(\Om;w)), 
	\end{equation*} 
	is a solution of \eqref{09.02.1}  if,  for each $t\in [0,T]$, and for all  $z_0\in H_0^1(\Om),z_1\in L^2(\Om)$, we have 
	\begin{equation}\label{09.12.1}
		M(z_0,z_1)=(y(t), \pt_t z(t))_{L^2(\Om)}-\lg \pt_ty(t), z(t)\rg_{H^{-1}(\Om;w), H_0^1(\Om;w)}, 
	\end{equation}
	where 
	\begin{equation*}
		M(z_0,z_1)=\int_0^t\int_\Ga  \f{\pt z}{\pt \nu_\mcA}(x,s) u(x,s)\df S\df s+(y_0, z_1)_{L^2(\Om)}-\lg y_1, z_0\rg_{H^{-1}(\Om;w), H_0^1(\Om;w)}, 
	\end{equation*}
	and $z$ is a solution of 
	\begin{equation*}
		\begin{cases}
			\pt_{tt}z+\mcA z=0, &\mbox{in }Q,\\
			z(0)=z_0, \pt_tz(0)=z_1, &\mbox{in }\Om,\\
			z=0, &\mbox{on }\Sigma. 
		\end{cases}
	\end{equation*} 
\end{definition}

\begin{lemma}\label{09.04.L1}
	Under the conditions in Definition \ref{09.04.D1}. Then there exists a unique solution 
	\begin{equation*}
		y\in C([0,T]; L^2(\Om))\cap C^1([0,T]; H^{-1}(\Om;w))
	\end{equation*}
	of \eqref{09.02.1}, and there exists a constant $C>0$, depends only on $w$, such that for $t\in (0,T)$ we have 
	\begin{equation*}
		\|y(t)\|_{L^2(\Om)}+\|\pt_t y(t)\|_{H^{-1}(\Om;w)}\leq C\left(\|y_0\|_{L^2(\Om)}+\|y_1\|_{H^{-1}(\Om;w)}+\|u\|_{L^2(\Sigma)}\right). 
	\end{equation*} 
\end{lemma}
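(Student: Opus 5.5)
The plan is a transposition (duality) argument in the spirit of Lions and Lasiecka--Triggiani: Definition~\ref{09.04.D1} defines $y(t)$ through the right-hand side of \eqref{09.12.1}. For fixed $t\in[0,T]$ we introduce the backward problem
\begin{equation*}
\pt_{tt}z+\mcA z=0 \text{ in }\Om\times(0,t),\quad z(t)=z_0^t,\ \pt_tz(t)=z_1^t,\quad z=0 \text{ on }\Ga\times(0,t),
\end{equation*}
with $(z_0^t,z_1^t)\in H_0^1(\Om;w)\times L^2(\Om)$. By time reversibility, Theorem~\ref{thm:2.12} applies to this problem. It gives $(z(0),\pt_tz(0))$ in the same energy space, with
\begin{equation*}
\|z(0)\|_{H_0^1(\Om;w)}+\|\pt_tz(0)\|_{L^2(\Om)}\le C\big(\|z_0^t\|_{H_0^1(\Om;w)}+\|z_1^t\|_{L^2(\Om)}\big).
\end{equation*}
Lemma~\ref{09.04.L3} with $f=0$, applied on $(0,t)\subset(0,T)$, gives
\begin{equation*}
\|\pt_{\nu_\mcA}z\|_{L^2(\Ga\times(0,t))}\le C\big(\|z_0^t\|_{H_0^1(\Om;w)}+\|z_1^t\|_{L^2(\Om)}\big).
\end{equation*}
The constant is uniform in $t\le T$, since the energy is conserved and the multiplier constants depend only on $T$.

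We then consider the linear functional
\begin{equation*}
\ell_t(z_0^t,z_1^t)=\int_0^t\int_\Ga \pt_{\nu_\mcA}z\,u\,\df S\df s+(y_0,\pt_tz(0))_{L^2(\Om)}-\lg y_1,z(0)\rg_{H^{-1}(\Om;w),H_0^1(\Om;w)}.
\end{equation*}
By Cauchy--Schwarz and the two bounds above, it satisfies
\begin{equation*}
|\ell_t|\le C\big(\|u\|_{L^2(\Sigma)}+\|y_0\|_{L^2(\Om)}+\|y_1\|_{H^{-1}(\Om;w)}\big)\big(\|z_0^t\|_{H_0^1(\Om;w)}+\|z_1^t\|_{L^2(\Om)}\big).
\end{equation*}
So $\ell_t$ is continuous on $H_0^1(\Om;w)\times L^2(\Om)$, whose dual with pivot $L^2$ is $H^{-1}(\Om;w)\times L^2(\Om)$. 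Riesz representation yields a unique pair $(-\pt_ty(t),y(t))\in H^{-1}(\Om;w)\times L^2(\Om)$ with $\ell_t(z_0^t,z_1^t)=(y(t),z_1^t)-\lg\pt_ty(t),z_0^t\rg$. This is exactly \eqref{09.12.1}, read with the $z$ prescribed at time $t$; equivalently, every $(z_0,z_1)$ at time $0$ is reached by solving backward. The dual-norm bound is the claimed estimate, and uniqueness follows because the representing element is unique.

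It remains to show that the second component is indeed the time derivative of the first, and that $y\in C([0,T];L^2(\Om))\cap C^1([0,T];H^{-1}(\Om;w))$. First assume smooth data: $u\in C_0^\iy(\Sigma)$, $y_0\in\mcD(A)$, $y_1\in H_0^1(\Om;w)$. Then $y=D u+\tilde y$, where $D$ is the Dirichlet map from Lemma~\ref{08.20.L1}, and $\tilde y$ solves $\pt_{tt}\tilde y+A\tilde y=-D\pt_{tt}u$ with zero boundary data, so Theorem~\ref{thm:2.12} applies. Green's formula from Definition~\ref{def:conormal}, applied to $\frac{d}{ds}\big[(y,\pt_tz)-(\pt_ty,z)\big]$, shows that this classical solution satisfies \eqref{09.12.1} and is regular; hence it coincides with the transposition solution. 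For general data we approximate in $L^2(\Om)\times H^{-1}(\Om;w)\times L^2(\Sigma)$. The uniform estimate makes the approximating solutions Cauchy in $C([0,T];L^2)\times C([0,T];H^{-1})$, so the limit inherits continuity and the relation between the two components.

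The main obstacle is the uniform-in-$t$ hidden regularity bound for $\pt_{\nu_\mcA}z$, together with the justification of the Green identity in the degenerate setting. Both are handled by the localization near $\Ga$, where $w\ge\La$ (Assumption~\ref{06.27.A1}(1)), and by Lemma~\ref{09.04.L3}. The density of the smooth data is standard, since $\mcD(A)$ is dense in $L^2(\Om)$ and $H_0^1(\Om;w)$ is dense in $H^{-1}(\Om;w)$.
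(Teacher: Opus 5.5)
Your proposal is correct and follows the same transposition argument as the paper: you bound the functional $M$ using the hidden-regularity estimate of Lemma~\ref{09.04.L3} together with time reversibility (your backward problem from time $t$ is the paper's isomorphism $(z(t),\pt_tz(t))\mapsto(z_0,z_1)$ on $H_0^1(\Om;w)\times L^2(\Om)$), and then you represent the functional by duality. Your extra step does more than the paper: you treat smooth data lifted by $D$ and then pass to uniform limits, which identifies the second component as $\pt_ty$ and yields the continuity in time, whereas the paper's proof asserts these without argument.
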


\begin{proof}
	It is easily verified that 
	\begin{equation}\label{09.12.2}
		|M(z_0,z_1)|\leq C\left(\|y_0\|_{L^2(\Om)}+\|y_1\|_{H^{-1}(\Om;w)}+\|u\|_{L^2(\Sigma)}\right)\left(\|z_0\|_{H_0^1(\Om;w)}+\|z_1\|_{L^2(\Om)}\right) 
	\end{equation}
	by Lemma \ref{09.04.L3}, note that 
	\begin{equation}\label{09.12.3}
		H_0^1(\Om;w)\ts L^2(\Om)\ra H_0^1(\Om;w)\ts L^2(\Om), \ (z(t), \pt_tz(t))\mapsto (z_0,z_1) 
	\end{equation}
	is an isomorphism, Hence the linear form 
	\begin{equation*}
		(z(t), \pt_tz(t))\mapsto M(z_0,z_1)
	\end{equation*}
	is also bounded on $H_0^1(\Om;w)\ts L^2(\Om)$. Hence, there exists a unique pair $(y(t), \pt_ty(t))\in L^2(\Om)\ts H^{-1}(\Om;w)$ satisfying \eqref{09.12.1}. 
	
	Finally, from \eqref{09.12.1} and \eqref{09.12.2}, we have 
	\begin{equation*}
		\begin{split}
			&\left|(y(t), \pt_t z(t))_{L^2(\Om)}-\lg \pt_ty(t), z(t)\rg_{H^{-1}(\Om;w), H_0^1(\Om;w)}\right|\\
			&\leq C\left(\|y_0\|_{L^2(\Om)}+\|y_1\|_{H^{-1}(\Om;w)}+\|u\|_{L^2(\Sigma)}\right)\left(\|z_0\|_{H_0^1(\Om;w)}+\|z_1\|_{L^2(\Om)}\right), 
		\end{split}
	\end{equation*}
	Note again that \eqref{09.12.3} is an isomorphism; then we get
	\begin{equation*}
		\|y(t)\|_{L^2(\Om)}+\|\pt_t y(t)\|_{H^{-1}(\Om;w)}\leq C\left(\|y_0\|_{L^2(\Om)}+\|y_1\|_{H^{-1}(\Om;w)}+\|u\|_{L^2(\Sigma)}\right). 
	\end{equation*} 
	This completes the proof of the lemma. 
\end{proof}

Now, we are in position to prove Theorem \ref{09.02.T1}. 

\begin{proof}[Proof of Theorem \ref{09.02.T1}]
	It is easy to see that the solution of the equation \eqref{09.02.1} can be written as 
	\begin{equation*}
		y=h+z, 
	\end{equation*}
	where $h$ is the solution of the following system
	\begin{equation}\label{09.12.5}
		\begin{cases}
			\pt_{tt}h+\mcA h=f, &\mbox{in }Q, \\
			h(0)=\pt_th(0)=0, &\mbox{in }\Om,\\
			h=0, &\mbox{on }\Sigma, 
		\end{cases}
	\end{equation}
	and $z$ is the solution of the following system
	\begin{equation}\label{09.12.6}
		\begin{cases}
			\pt_{tt}z+A z=0, &\mbox{in }Q, \\
			z(0)=y_0, \pt_tz(0)=y_1, &\mbox{in }\Om,\\
			z=u, &\mbox{on }\Sigma. 
		\end{cases}
	\end{equation}
	
	On one hand, for the system \eqref{09.12.5}, we have Theorem \ref{thm:2.12} and Lemma \ref{09.04.L3}. i.e., 
	\begin{equation*}
		\|h(t)\|_{C([0,T];H^1(\Om;w))}+\|\pt_t h(t)\|_{C([0,T]; L^2(\Om))}\leq C\|f\|_{L^1(0,T; L^2(\Om))}, 
	\end{equation*}
	and 
	\begin{equation*} 
		\|\pt_{\nu_\mcA} h\|_{L^2(\Sigma)}\leq C \|f\|_{L^1(0,T; L^2(\Om))}.
	\end{equation*}
	
	On the other hand, for system \eqref{09.12.6}, we set $\wh z=\pt_tz$, and 
	\begin{equation*}
		\begin{cases}
			\pt_{tt}\wh z+A \wh z=0, &\mbox{in }Q, \\
			\wh z(0)=y_1, \pt_t\wh z(0)=\mcA y_0, &\mbox{in } \Om,\\
			\wh z=\pt_t u, &\mbox{on }\Sigma, 
		\end{cases}
	\end{equation*}
	since $\pt_tu\in L^2(\Sigma), y_1\in L^2(\Om)$, and $\mcA y_0\in H^{-1}(\Om;w)$ by Corollary \ref{09.03.C1} in the following, by Lemma \ref{09.04.L1}, we have 
	\begin{equation*}
		\wh z\in C([0,T]; L^2(\Om))\cap C^1([0,T]; H^{-1}(\Om;w)), 
	\end{equation*}
	and there exists a constant $C>0$ such that for all $t\in (0,T)$, 
	\begin{equation*}
		\|\wh z\|_{L^2(\Om)}+\|\pt_t\wh z(t)\|_{H^{-1}(\Om;w)}\leq C\left(\|y_0\|_{H_0^1(\Om;w)}+\|y_1\|_{L^2(\Om)}+\|u\|_{H^1(\Sigma)}\right)
	\end{equation*}
	by Corollary \ref{09.03.C1} in the following. 
	These imply that 
	\begin{equation*}
		z\in C^1([0,T]; L^2(\Om))\cap C^2([0,T]; H^{-1}(\Om;w)), 
	\end{equation*}
	and from $\pt_{tt}z+A z=0$ we have 
	\begin{equation*}
		Az\in C([0,T]; H^{-1}(\Om;w)). 
	\end{equation*}
	Note that $u(\cdot, t)\in H^1(\Ga)$, from Lemma \ref{09.11.L1}, we have 
	\begin{equation*}
		z\in C([0,T]; H^1(\Om;w))\cap C^1([0,T]; L^2(\Om)). 
	\end{equation*}
	Moreover, there exists $C>0$ such that for all $t\in (0,T)$ we have 
	\begin{equation*}
		\|\pt_tz\|_{L^2(\Om)}+\|Az\|_{H^{-1}(\Om;w)}\leq C\left(\|y_0\|_{H_0^1(\Om;w)}+\|y_1\|_{L^2(\Om)}+\|u\|_{H^1(\Sigma)}\right). 
	\end{equation*}
	Using Lemma \ref{09.11.L1} again, we obtain 
	\begin{equation*}
		\|z(t)\|_{H^1(\Om;w)}+\|\pt_t z(t)\|_{L^2(\Om)}\leq C\left(\|y_0\|_{H_0^1(\Om;w)}+\|y_1\|_{L^2(\Om)}+\|u\|_{H^1(\Sigma)}\right). 
	\end{equation*}
	
	Overall, we have proved Theorem \ref{09.02.T1}. 
\end{proof}

\begin{remark}\label{rem:lifted-setting}
The a priori estimates \eqref{09.04.1}--\eqref{09.04.2} yield continuous dependence and uniform boundedness of solutions with respect to the data $(y_0,y_1,f,u)$, and we will cite them as the stability interface for the boundary-input problem. In the sequel, when treating non-homogeneous Dirichlet inputs, we work with the lifted representation $y=z+Du$, where the Dirichlet map $D$ is defined in Subsection 2.2 (see \eqref{08.14.5}). This reduction places the boundary-input problem into the homogeneous-boundary plus forcing framework, and it aligns directly with the mild-solution representations developed in Subsections 2.3 and 2.4.
\end{remark}

\subsection{Mild solution with Dirichlet boundary condition}

In this subsection we cast the homogeneous Dirichlet problem into an abstract evolution equation generated by the degenerate elliptic operator. This yields the cosine operator framework and the standard mild solution representation used later.

In this section, we shall establish the mild solution for equation
\begin{equation}\label{08.14.3}
	\begin{cases} 
		\pt_{tt}y-\Div(w\nabla y)=0, &\mbox{in }Q,\\
		y(0)=y_0, \pt_t y(0)=y_1, &\mbox{in }\Om,\\
		y=0, &\mbox{on }\Sigma, 
	\end{cases}
\end{equation}
where $y_0\in H_0^1(\Om;w)$ and $y_1\in L^2(\Om)$. 

Before establishing the mild solution for the equation \eqref{08.14.3}, some preparatory steps are necessary.

\begin{definition}\label{08.13.D1}
	For each $0\leq \theta \in\R$, we denote 
	\begin{equation*}
		\mcD(\mcA^\theta)=\left\{u\in L^2(\Om)\colon \sum_{n\in\N}\la_n^{2 \theta}(u,\Phi_n)_{L^2(\Om)}^2<+\iy \right\}.
	\end{equation*}  
\end{definition}

\begin{definition}\label{08.13.D2}
	For $\theta\in [0,\f{1}{2}]$, we define 
	\begin{equation*}
		\mcD(\mcA^{-\theta})=\left\{u\in\ms{D}'(\Om)\colon \sum_{n\in\N}\la_n^{-2\theta}\lg u,\Phi_n\rg_{\mcD'(\mcA^\theta), \mcD(A^\theta)}^2<+\iy\right\}, 
	\end{equation*}
	where  $\ms{D}(\Om)=C_0^\iy(\Om)$ and $\ms{D}'(\Om)$ is the space of distributions, and $\mcD'(\mcA^\theta)$ is the dual space of $\mcD(\mcA^\theta)$. 
\end{definition}

\begin{remark}\label{08.13.R1}
	From Definition \ref{08.13.D1} and Lemma \ref{06.28.L1}, it is obviously that 
	\begin{equation*}
		\mcD(\mcA^0)=L^2(\Om),\quad \mcD(\mcA^1)=\mcD(\mcA), \mbox{ and } \mcD(\mcA^\f{1}{2})=H_0^1(\Om;w), 
	\end{equation*}
	and 
	\begin{equation*}
		\Phi_n\in \ba_{n=0}^\iy \mcD(\mcA^n) \mbox{ for all } n\in\N. 
	\end{equation*}
	Moreover, (1) For all $0\leq \theta\leq \eta$, we have
	\begin{equation*}
		\mcD(\mcA^\eta)\s \mcD(\mcA^\theta).
	\end{equation*}
	This is a conclusion of Lemma \ref{07.08.L1} and 
	\begin{equation*}
		\sum_{n\in\N}\la_n^{2\eta}(u,\Phi_n)_{L^2(\Om)}^2\geq \la_1^{2(\eta-\theta)}\sum_{n\in\N}\la_n^{2\theta}(u,\Phi_n)_{L^2(\Om)}^2. 
	\end{equation*}
	
	(2) $\mcD(\mcA)$ is dense in $L^2(\Om)$ and $H_0^1(\Om;w)$. Indeed, for each $u=\sum_{n=1}^\iy u_n\Phi_n\in L^2(\Om)$ with $u_n=(u, \Phi_n)_{L^2(\Om)}$, take $u_m=\sum_{n=1}^m u_n\Phi_n$, then $u_m\in \mcD(\mcA)$ and $\|u_m-u\|_{L^2(\Om)}\ra 0$ as $m\ra\iy$. Next, for each $u=\sum_{n=1}^\iy u_n\Phi_n\in H_0^1(\Om;w)$ with $u_n=(u,\Phi_n)_{L^2(\Om)}$,  take $u_m=\sum_{n=1}^m u_n\Phi_n$, then $u_m\in\mcD(\mcA)$ and from Lemma \ref{06.28.L1} we have 
	\begin{equation*}
		\|u_m-u\|_{H_0^1(\Om;w)}^2=\int_\Om \left|\sum_{n=m+1}^\iy \la_n^\f{1}{2}u_n \left(\la_n^{-\f{1}{2}}\nabla \Phi_n\right)\right|^2w\df x=\sum_{n=m+1}^\iy u_n^2\la_n \ra 0 \mbox{ as } m\ra\iy. 
	\end{equation*}
	
	(3) Eigenfunction $\Phi_n\ (n\in\N)$ may not belong to $H^1(\Om)$. And $C_0^\iy(\Om)$ is not necessarily included in  $\mcD(\mcA)$. 
\end{remark}

\begin{remark}\label{08.13.R2}
	(1) For $\theta\in [0,\f{1}{2}]$, we have $\mcD(\mcA^{-\theta})=\mcD'(\mcA^\theta)$,  especially,  $H^{-1}(\Om;w)=\mcD(\mcA^{-\f{1}{2}})$. 
	
	Indeed, denote  $u=\sum_{n=1}^\iy u_n\Phi_n\in L^2(\Om)$ with $u_n=(u,\Phi_n)_{L^2(\Om)}$ for all $n\in\N$.  From Definition \ref{08.13.D1}, we have $u\in \mcD(\mcA^\theta)$ if and only if $\|u\|_{\mcD(\mcA^\theta)}^2=\sum_{n=1}^\iy u_n^2\la_n^{2\theta}<\iy$. 
	
	Now, for each $\vp\in \mcD'(\mcA^{\theta})$, taking $m\in\N$, then 
	\begin{equation*}
		u_n^m=\f{\la_n^{-2\theta}\lg \vp,\Phi_n\rg_{\mcD'(\mcD^\theta), \mcD(\mcA^\theta)}}{\left(\sum_{n=1}^m \la_n^{-2\theta}\lg\vp,\Phi_n\rg_{\mcD'(\mcA^\theta), \mcD(\mcA^\theta)}^2\right)^\f{1}{2}} \mbox{ for all } n\leq m, \quad u^m=\sum_{n=1}^m u_n^m\Phi_n, 
	\end{equation*}
	then $u^m\in L^2(\Om)$  for all $m\in\N$, and from Definition \ref{08.13.D1} we have 
	\begin{equation*}
		\sum_{n=1}^m (u_n^m)^2\la_n^{2\theta}=1\Lra u^m\in \mcD(\mcA^\theta). 
	\end{equation*}
	And hence for all $m\in\N$, 
	\begin{equation*}
		\begin{split}
			\|\vp\|_{\mcD'(\mcA^{\theta})}
			&=\sup_{
				\sum_{n=1}^\iy u_n^2\la_n^{2\theta}\leq 1}\lg \vp, u\rg_{\mcD'(\mcA^\theta), \mcD(\mcA^\theta)}\\
			&\geq \lg\vp, u^m\rg_{\mcD'(\mcA^\theta), \mcD(\mcA^\theta)}=\left(\sum_{n=1}^m \la_n^{-2\theta}\llg \vp, \Phi_n\rrg_{\mcD'(\mcA^\theta), \mcD(\mcA^\theta)}^2\right)^\f{1}{2}.
		\end{split}
	\end{equation*}
	This implies that 
	\begin{equation*}
		\|\vp\|_{\mcD'(A^\theta)}\geq \left(\sum_{n=1}^\iy \la_n^{-2\theta}\llg \vp, \Phi_n\rrg_{\mcD'(\mcA^\theta), \mcD(\mcA^\theta)}^2\right)^\f{1}{2}
	\end{equation*}
	by the arbitrary of $m\in\N$. 
	Hence $\mcD'(\mcA^\theta)\s \mcD(\mcA^{-\theta})$. 
	
	On the other hand, for each $\vp\in \mcD(\mcA^{-\theta})$,  
	we define
	\begin{equation}\label{08.14.1}
		\begin{split}
			\vp: \mcD(\mcA^\theta)\ra \R, \ \lg \vp, u\rg 
			&=\sum_{n=1}^\iy u_n\lg\vp, \Phi_n \rg_{\mcD'(\mcA^{\theta}),\mcD(\mcA^\theta)}, 
		\end{split}
	\end{equation}
	where $u=\sum_{n=1}^\iy u_n\Phi_n$ with $u_n=(u,\Phi_n)_{L^2(\Om)}$ for all $n\in\N$, 
	it is easily verified that  
	\begin{equation*}
		\begin{split}
			\left|\lg \vp, u\rg\right| 
			&= \left|\sum_{n=1}^\iy u_n\la_n^\theta\la_n^{-\theta}\llg \vp, \Phi_n\rrg_{\mcD'(\mcA^{\theta}),\mcD(\mcA^\theta)}\right|\leq \|u\|_{\mcD(\mcA^\theta)}\left(\sum_{n=1}^\iy \la_n^{-2\theta}\lg \vp, \Phi_n\rg_{\mcD'(\mcA^{\theta}),\mcD(\mcA^\theta)}^2\right)^\f{1}{2}
		\end{split}
	\end{equation*}
	from Cauchy-Schwarz inequality, then 
	$\vp$ is a bounded linear functional on $\mcD(\mcA^\theta)$, and hence $\mcD(\mcA^{-\theta})\s\mcD'(\mcA^\theta)$. Moveover,  
	\begin{equation*}
		\begin{split}
			\|\vp\|_{\mcD'(\mcA^{\theta})}
			&=\sup_{
				\sum_{n=1}^\iy u_n^2\la_n^{2\theta}\leq 1}\sum_{n=1}^\iy u_n\la_n^\theta\la_n^{-\theta}\llg \vp, \Phi_n\rrg_{\mcD'(\mcA^{\theta}),\mcD(\mcA^\theta)}\leq \left(\sum_{n=1}^\iy \la_n^{-2\theta}\llg \vp, \Phi_n\rrg_{\mcD'(\mcA^{\theta}),\mcD(\mcA^\theta)}^2\right)^\f{1}{2}.
		\end{split}
	\end{equation*}

	From above, we get 
	\begin{equation*}
		\|\vp\|_{\mcD(\mcA^{-\theta})}=\left(\sum_{n=1}^\iy \la_n^{-2\theta}\llg \vp, \Phi_n\rrg_{\mcD'(\mcA^{\theta}),\mcD(\mcA^\theta)}^2\right)^\f{1}{2}.
	\end{equation*} 
	
	(2) For $\theta>\f{1}{2}$, it is hard to define $\mcD(\mcA^{-\theta})$ since $C_0^\iy(\Om)\not\s \mcD(\mcA^\theta)$ in general.  This highlights a key distinction between classical Sobolev spaces and weighted Sobolev spaces.
\end{remark}

The following lemma is an extended version of Lemma \ref{06.28.L1}.

\begin{lemma}\label{09.03.L2}\label{06.28.L1}
	Let $\f{1}{2}\leq \theta\in\R$ be a fixed constant. Then the operator 
	\begin{equation*}
		\mcA: \mcD(\mcA^\theta)\ra \mcD(\mcA^{\theta-1}), 
	\end{equation*}
	is isometric linear isomorphism. Moreover, for each $u=\sum_{n=1}^\iy (u,\Phi_n)_{L^2(\Om)}\Phi_n\in \mcD(\mcA^\theta)$, we have 
	\begin{equation}\label{09.03.7}
		\mcA\left(\sum_{n=1}^\iy (u,\Phi_n)_{L^2(\Om)}\Phi_n\right)=\sum_{n=1}^\iy (u, \Phi_n)_{L^2(\Om)}\la_n\Phi_n. 
	\end{equation}
\end{lemma}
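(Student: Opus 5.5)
The plan is to work entirely in the spectral coordinates given by the orthonormal eigenbasis $\{\Phi_n\}$ of $L^2(\Om)$. By Definition \ref{08.13.D1}, membership in $\mcD(\mcA^\theta)$ for $\theta\ge 0$ is membership of the coefficient sequence in a weighted $\ell^2$. The natural norm is $\|u\|_{\mcD(\mcA^\theta)}^2=\sum_n\la_n^{2\theta}u_n^2$ with $u_n=(u,\Phi_n)_{L^2(\Om)}$. For $\theta-1\in[-\f12,0)$, Remark \ref{08.13.R2} identifies $\mcD(\mcA^{\theta-1})$ with $\mcD'(\mcA^{1-\theta})$, normed by $\big(\sum_n\la_n^{2(\theta-1)}\lg\vp,\Phi_n\rg^2\big)^{1/2}$. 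So in every case the target space is the weighted sequence space with weight $\la_n^{2(\theta-1)}$. The lemma then reduces to showing two things: $\mcA$ acts diagonally by multiplication by $\la_n$, and the map $(u_n)\mapsto(\la_nu_n)$ is an isometric bijection from $\ell^2(\la_n^{2\theta})$ onto $\ell^2(\la_n^{2(\theta-1)})$. The latter is immediate, since $\sum\la_n^{2(\theta-1)}(\la_nu_n)^2=\sum\la_n^{2\theta}u_n^2$, and the inverse is division by $\la_n>0$.

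The first key step is formula \eqref{09.03.7}, i.e.\ making sense of $\mcA u$ for $u\in\mcD(\mcA^\theta)$ and identifying its coefficients. For $\theta\ge 1$ I would use self-adjointness of $\mcA$ on $D(A)$. Together with the identification $\mcD(\mcA^1)=\mcD(\mcA)$, this gives $(\mcA u,\Phi_n)=(u,\mcA\Phi_n)=\la_n u_n$. Hence $\mcA u=\sum\la_nu_n\Phi_n$ in $L^2$, and this lies in $\mcD(\mcA^{\theta-1})$ by the computation above.

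For $\f12\le\theta<1$, $u$ lies only in $H_0^1(\Om;w)=\mcD(\mcA^{1/2})$, and $\mcA u$ must be read weakly as the functional $v\mapsto\int_\Om w\nabla u\cdot\nabla v\,dx$ on $H_0^1(\Om;w)$. Testing against $v=\Phi_n$ and using the weak eigen-equation gives $\lg\mcA u,\Phi_n\rg=\int w\nabla u\cdot\nabla\Phi_n=\la_n u_n$. This requires the $H_0^1(\Om;w)$-orthogonality $(\Phi_n,\Phi_m)_{H^1(\Om;w)}=\la_n\delta_{nm}$ stated before Theorem \ref{thm:2.12}, and convergence of the partial sums $u^m\to u$ in $H_0^1(\Om;w)$ from Remark \ref{08.13.R1}(2). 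Then $\mcA u$, as an element of $H^{-1}(\Om;w)\subseteq\mcD'(\mcA^{1-\theta})$, has coefficients $\la_nu_n$, and the series $\sum\la_nu_n\Phi_n$ converges to it in the $\mcD(\mcA^{\theta-1})$-norm.

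Surjectivity follows by constructing $u=\sum\la_n^{-1}g_n\Phi_n$ for a given target $g$ with coefficients $g_n$. Injectivity follows from $\la_n\ge\la_1>0$.

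The main obstacle is the case $\f12\le\theta<1$, together with the borderline identification $\mcD(\mcA^{1/2})=H_0^1(\Om;w)$ with equality of norms. This identification is used implicitly in Remark \ref{08.13.R1}, which itself cites this lemma. To avoid circularity I would prove it directly. Completeness of $\{\la_n^{-1/2}\Phi_n\}$ in $H_0^1(\Om;w)$ comes from the fact that $v\perp\Phi_n$ in the energy inner product for all $n$ implies $\la_n(v,\Phi_n)_{L^2}=0$ for all $n$, hence $v=0$. This gives $\|u\|_{H_0^1(\Om;w)}^2=\sum\la_nu_n^2$, and with it the weak-form argument above.
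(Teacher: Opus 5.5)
Your argument is correct and uses the same core idea as the paper: $\mcA$ is diagonal with multipliers $\la_n$, and both $\mcD(\mcA^\theta)$ and $\mcD(\mcA^{\theta-1})$ are weighted $\ell^2$ spaces of eigen-coefficients. The execution differs in useful ways.

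The paper works with partial sums $u_m=\sum_{n\le m}u_n\Phi_n$. It shows that $\mcA u_m$ is Cauchy in $\mcD(\mcA^{\theta-1})$ and identifies the limit with $\mcA u$ by testing against $\vp\in C_0^\iy(\Om)$. For $\theta=\f12$ it gets the isometry from a sup over the unit ball of $H_0^1(\Om;w)$. It never checks surjectivity. It also takes $\mcD(\mcA^{1/2})=H_0^1(\Om;w)$ with equal norms for granted, even though Remark \ref{08.13.R1} derives that identity from this very lemma.

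You instead read off the coefficients of $\mcA u$ directly by pairing with $\Phi_n$. That makes isometry, injectivity and surjectivity (through the explicit inverse $g\mapsto\sum\la_n^{-1}g_n\Phi_n$) immediate. Your completeness argument for $\{\la_n^{-1/2}\Phi_n\}$ in the energy space removes the circularity. For the reverse inclusion $\mcD(\mcA^{1/2})\s H_0^1(\Om;w)$, which you need when you say $u\in\mcD(\mcA^\theta)$ lies in $H_0^1(\Om;w)$, add one line. If $\sum\la_nu_n^2<\iy$, the partial sums are Cauchy in $H_0^1(\Om;w)$, and their limit is $u$ because $H_0^1(\Om;w)\hookrightarrow L^2(\Om)$ continuously.

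One slip needs fixing. For $\f12<\theta<1$ the inclusion runs the other way: $\mcD'(\mcA^{1-\theta})\s H^{-1}(\Om;w)$, because $H_0^1(\Om;w)=\mcD(\mcA^{1/2})$ is a dense subspace of $\mcD(\mcA^{1-\theta})$. So knowing $\mcA u\in H^{-1}(\Om;w)$ does not by itself place it in the target space. You need the bound $|\lg\mcA u,v\rg|=|\sum\la_nu_nv_n|\le\|u\|_{\mcD(\mcA^\theta)}\|v\|_{\mcD(\mcA^{1-\theta})}$ for $v\in H_0^1(\Om;w)$, and then you extend by density. This is exactly the step the paper's Cauchy-sequence argument supplies. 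With that correction your coefficient description of the target norm is justified.
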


\begin{proof}
	Take $\theta=\f{1}{2}$. For each $u,v\in \mcD(\mcA^\f{1}{2})= H_0^1(\Om;w)$, note that $\mcD(\mcA^{-\f{1}{2}})=\mcD'(\mcA^\f{1}{2})=H^{-1}(\Om;w)$,  we have 
	\begin{equation*}
		\begin{split} 
			\lg \mcA u, v\rg_{\mcD(\mcA^{-\f{1}{2}}),\mcD(\mcA^\f{1}{2})}
			&=\int_\Om (\nabla u\cdot \nabla v)w\df x
			=\sum_{n=1}^\iy (u_n,\Phi_n)_{L^2(\Om)}(v_n,\Phi_n)_{L^2(\Om)}\la_n\\
			&\leq \|u\|_{H_0^1(\Om;w)}\|v\|_{H_0^1(\Om;w)}, 
		\end{split}
	\end{equation*}
	and hence 
	\begin{equation}\label{09.03.5}
		\|\mcA u\|_{\mcD(\mcA^{-\f{1}{2}})}\leq \|u\|_{H_0^1(\Om;w)}. 
	\end{equation}
	Moreover, we have 
	\begin{equation}\label{09.03.6}
		\mcA u_m\ra \mcA u \mbox{ strongly in } \mcD(\mcA^{-\f{1}{2}}) \mbox{ with } u_m=\sum_{n=1}^m (u,\Phi_n)_{L^2(\Om)}\Phi_n \mbox{ for } m\in\N. 
	\end{equation}
	Hence, for $0\neq u\in \mcD(\mcA^\f{1}{2})$, for $m\in\N$ large enough, we have 
	\begin{equation*}
		\begin{split}
			\|\mcA u_m\|_{\mcD(\mcA^{-\f{1}{2}})}
			&=\sup_{\|v\|_{H_0^1(\Om;w)}\leq 1}\lg \mcA u_m, v\rg_{\mcD(\mcA^{-\f{1}{2}}),\mcD(\mcA^\f{1}{2})}\geq \llg \mcA u_m, \f{u_m}{\|u\|_{H_0^1(\Om;w)}}\rrg_{\mcD(\mcA^{-\f{1}{2}}),\mcD(\mcA^\f{1}{2})}\\
			&=\f{1}{\|u\|_{H_0^1(\Om;w)}}\sum_{n=1}^m (u_n,\Phi_n)^2\la_n\geq \left(\sum_{n=1}^m (u_n,\Phi_n)^2\la_n\right)^\f{1}{2}=\|u_m\|_{H_0^1(\Om;w)}.
		\end{split}
	\end{equation*}
	This shows that $\|\mcA u\|_{\mcD(\mcA^{-\f{1}{2}})}\geq \|u\|_{H^1(\Om;w)}$. This together with \eqref{09.03.5} we get $\|u\|_{\mcD(\mcA^{-\f{1}{2}})}=\|u\|_{\mcD(\mcA^\f{1}{2})}$ for all $u\in \mcD(\mcA^\f{1}{2})$. Again, from 
	\begin{equation*}
		\mcA u_m=\sum_{n=1}^m (u,\Phi_n)_{L^2(\Om)}\la_n\Phi_n
	\end{equation*}
	and \eqref{09.03.6} we obtain \eqref{09.03.7}.
	
	Let $\theta\geq \f{1}{2}$. For $u=\sum_{n=1}^\iy (u, \Phi_n)_{L^2(\Om)}\Phi_n\in \mcD(\mcA^\theta)$, take $u_m=\sum_{n=1}^m (u,\Phi_n)_{L^2(\Om)}\Phi_n$ for all $m\in\N$,   we have $\mcA u_m=\sum_{n=1}^m (u,\Phi_n)_{L^2(\Om)}\la_n \Phi_n$, and for $k< m$ we have 
	\begin{equation*}
		\begin{split} 
			\|\mcA u_m-\mcA u_k\|_{\mcD(\mcA^{\theta-1})}^2=\sum_{n=k+1}^m(u,\Phi_n)_{L^2(\Om)}^2\la_n^2 \la_n^{2\theta-2}= \|u_m-u_k\|_{\mcD(\mcA^\theta)}^2\ra 0 \mbox{ as } k,m\ra \iy. 
		\end{split} 
	\end{equation*}
	This implies that $\mcA u_m\ra h$ strongly in $\mcD(\mcA^{\theta-1})$. Now,  by $\mcD(\mcA^\theta)\s \mcD(\mcA^\f{1}{2})$ since $\theta\geq \f{1}{2}$, we have $u_m\ra u$ strongly in $\mcD(\mcA^\f{1}{2})$ from (1) in Remark \ref{08.13.R1}, and then  for each $\vp\in C_0^\iy(\Om)$, we have 
	\begin{equation*}
		\begin{split}
			\lg h, \vp\rg
			&=\lim_{m\ra\iy}\lg \mcA u_m, \vp\rg=\lim_{m\ra\iy}\int_\Om (\nabla u_m\cdot \nabla \vp)w\df x=\int_\Om (\nabla u\cdot\nabla\vp)w\df x=\lg \mcA u, \vp\rg 
		\end{split}
	\end{equation*}
	in the sense of distribution.  i.e., $\mcA u=\lim_{m\ra\iy}\mcA u_m$ strongly in $\mcD(\mcA^{\theta-1})$. And hence \eqref{09.03.7} holds. Finally, we have 
	\begin{equation*}
		\begin{split}
			\|\mcA u\|_{\mcD(\mcA^{\theta-1})}^2=\sum_{n=1}^\iy (u,\Phi_n)_{L^2(\Om)}^2\la_n^2\la_n^{2\theta-2}=\sum_{n=1}^\iy (u,\Phi_n)_{L^2(\Om)}^2\la_n^{2\theta}=\|u\|_{\mcD(\mcA^\theta)}^2. 
		\end{split}
	\end{equation*}

	It is obvious that $\mcA$ is a linear operator. 
\end{proof}

\begin{remark}\label{09.03.R1}
	For $\theta > \frac{1}{2}$, we still do not know whether Lemma \ref{09.03.L2} holds, since $\mathcal{D}(\mathcal{A}^{-\theta})$ may not be contained in $\ms{D}'(\Omega)$.
\end{remark}

\begin{lemma}\label{08.13.L3}
	For each  $0\leq \theta\in\R$ and $f\in \mcD(\mcA^\theta)$. Then, for each $0\neq\la\in\R$, the solution $z\in\mcD(\mcA)$ of 
	\begin{equation*}
		z+\la^2\mcA z=f
	\end{equation*}
	satisfies $z\in \mcD(\mcA^{\theta+1})$.
\end{lemma}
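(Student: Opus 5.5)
The plan is to diagonalize everything in the eigenbasis $\{\Phi_n\}$ of $\mcA$ and read the regularity off the Fourier coefficients.

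\emph{Existence and uniqueness.} For $\la\neq 0$, the operator $I+\la^2\mcA$ is positive and self-adjoint on $L^2(\Om)$ with $I+\la^2\mcA\geq I$. It is therefore invertible from $\mcD(\mcA)$ onto $L^2(\Om)$. Equivalently, one can apply Lemma \ref{07.08.L1} with $V=\la^{-2}$ and right-hand side $\la^{-2}f$, which also gives $z\in\mcD(\mcA)$. Uniqueness is clear from the positivity of $\mcA$.

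\emph{Fourier coefficients of $z$.} Write $f=\sum_n f_n\Phi_n$ and $z=\sum_n z_n\Phi_n$, where $f_n=(f,\Phi_n)_{L^2(\Om)}$ and $z_n=(z,\Phi_n)_{L^2(\Om)}$. Pair the equation with $\Phi_n$ in $L^2(\Om)$. Since $z\in\mcD(\mcA)$ and $\Phi_n\in\mcD(\mcA)$, the self-adjointness of $\mcA$ gives
\[
(\mcA z,\Phi_n)_{L^2(\Om)}=(z,\mcA\Phi_n)_{L^2(\Om)}=\la_n z_n.
\]
This step could also be justified through formula \eqref{09.03.7} of Lemma \ref{09.03.L2} with $\theta=1$. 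Hence
\[
(1+\la^2\la_n)z_n=f_n, \qquad\text{that is,}\qquad z_n=\f{f_n}{1+\la^2\la_n}.
\]

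\emph{The regularity estimate.} By Definition \ref{08.13.D1}, we must show $\sum_n\la_n^{2\theta+2}z_n^2<\iy$. We compute
\[
\la_n^{2\theta+2}z_n^2=\la_n^{2\theta}f_n^2\cdot\f{\la_n^2}{(1+\la^2\la_n)^2}\leq \la^{-4}\la_n^{2\theta}f_n^2,
\]
since $\la_n/(1+\la^2\la_n)\leq\la^{-2}$. Summing over $n$ gives
\[
\|z\|_{\mcD(\mcA^{\theta+1})}\leq\la^{-2}\|f\|_{\mcD(\mcA^\theta)}<\iy,
\]
so $z\in\mcD(\mcA^{\theta+1})$.

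The only point needing care is identifying the coefficients of $\mcA z$ with $\la_n z_n$ for a general $z\in\mcD(\mcA)$, as opposed to finite sums. Self-adjointness handles this directly. I therefore expect no real obstacle; the rest is the elementary bound on the multiplier $\la_n/(1+\la^2\la_n)$.
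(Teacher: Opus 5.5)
Your proposal is correct and follows essentially the same route as the paper: expand $f$ and $z$ in the eigenbasis $\{\Phi_n\}$, obtain $f_n=(1+\lambda^2\lambda_n)z_n$, and bound $\sum_n \lambda_n^{2(\theta+1)} z_n^2$ by $\lambda^{-4}\sum_n\lambda_n^{2\theta}f_n^2$. The differences are cosmetic: you justify the coefficient identity by self-adjointness rather than by Lemma~\ref{09.03.L2}, and you also verify existence and uniqueness of $z$, which the paper takes for granted.
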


\begin{proof}
	Let $f=\sum_{n=1}^\iy f_n\Phi_n$ with $f_n=(f,\Phi_n)_{L^2(\Om)}$,  and $z=\sum_{n=1}^\iy z_n \Phi_n$ with $z_n=(z,\Phi_n)_{L^2(\Om)}$. Then, from   Lemma \ref{09.03.L2}, we have 
	\begin{equation*}
		\sum_{n=1}^\iy f_n\Phi_n=\sum_{n=1}^\iy z_n\Phi_n+\la^2\sum_{n=1}^\iy z_n\la_n\Phi_n=\sum_{n=1}^\iy z_n\left(1+\la^2\la_n\right)\Phi_n.
	\end{equation*}
	This implies that $f_n=z_n(1+\la^2\la_n)$ for all $n\in\N$. And hence
	\begin{equation*}
		\begin{split} 
		\iy>\sum_{n=1}^\iy f_n^2\la_n^{2\theta}
		&=\sum_{n=1}^\iy z_n^2(1+\la^2\la_n)^2\la_n^{2\theta}\\
		&=\sum_{n=1}^\iy z_n^2\left(\la^4\la_n^{2(\theta+1)}+2  \la^2\la_n^{1+2\theta}+ \la_n^{2\theta}\right)\geq \la^4\sum_{n=1}^\iy z_n^2\la_n^{2(\theta+1)}. 
		\end{split} 
	\end{equation*}
	Thus $z\in\mcD(\mcA^{\theta+1})$. 
\end{proof}

\begin{lemma} \label{08.13.L2}
	The operator
	\begin{equation*}
		\mf{A}:=
		\begin{pmatrix}
			0 & I\\
			-\mcA &0 
		\end{pmatrix}, \mbox{with domain}=\mcD(\mcA)\ts \mcD(\mcA^\f{1}{2})
	\end{equation*}
	generates a strongly continuous (s.c.) unitary group on $H_0^1(\Om;w)\ts L^2(\Om)$, where $\mcD(\mcA)$ and $\mcD(\mcA^\f{1}{2})$ is defined in Definition \ref{08.13.D1}. 
\end{lemma}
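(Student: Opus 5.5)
The plan is to use Stone's theorem: a densely defined operator $\mathfrak{A}$ on a Hilbert space generates a strongly continuous unitary group if and only if $\mathfrak{A}$ is skew-adjoint, $\mathfrak{A}^*=-\mathfrak{A}$. As a cross-check, I would also verify the Lumer--Phillips conditions for both $\mathfrak{A}$ and $-\mathfrak{A}$. The underlying Hilbert space is $\mathcal{H}:=H_0^1(\Omega;w)\times L^2(\Omega)$. By Remark~\ref{07.08.R1}(2), we equip it with the inner product
\[
\big((y_1,z_1),(y_2,z_2)\big)_{\mathcal{H}}=\int_\Omega w\,\nabla y_1\cdot\nabla y_2\,dx+\int_\Omega z_1z_2\,dx .
\]
By Remark~\ref{08.13.R1}, this first term equals $(\mathcal{A}^{1/2}y_1,\mathcal{A}^{1/2}y_2)_{L^2(\Omega)}=\sum_n\lambda_n y_{1,n}y_{2,n}$ in eigen-coordinates. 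Density of the domain $\mathcal{D}(\mathcal{A})\times\mathcal{D}(\mathcal{A}^{1/2})$ in $\mathcal{H}$ follows from Remark~\ref{08.13.R1}(2): $\mathcal{D}(\mathcal{A})$ is dense in $H_0^1(\Omega;w)$, and $\mathcal{D}(\mathcal{A}^{1/2})=H_0^1(\Omega;w)\supset\mathcal{D}(\mathcal{A})$ is dense in $L^2(\Omega)$.

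\emph{Step 1: skew-symmetry.} Take $U_i=(y_i,z_i)$ in the domain. Then
\[
(\mathfrak{A}U_1,U_2)_{\mathcal{H}}=\int_\Omega w\nabla z_1\cdot\nabla y_2\,dx-(\mathcal{A}y_1,z_2)_{L^2}.
\]
Since $z_2\in H_0^1(\Omega;w)$ and $y_1\in\mathcal{D}(\mathcal{A})$, the weak formulation (Definition~\ref{def:conormal}, with zero trace of $z_2$) gives $(\mathcal{A}y_1,z_2)_{L^2}=\int_\Omega w\nabla y_1\cdot\nabla z_2\,dx$. Hence $(\mathfrak{A}U_1,U_2)=-(U_1,\mathfrak{A}U_2)$, and in particular $(\mathfrak{A}U,U)=0$. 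Alternatively, the whole computation can be done with the eigen-expansions of Lemma~\ref{09.03.L2}.

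\emph{Step 2: range condition.} I will show that $I\pm\mathfrak{A}$ are surjective. For $(f,g)\in\mathcal{H}$, solving $U\mp\mathfrak{A}U=(f,g)$ amounts to $y\mp z=f$ and $z\pm\mathcal{A}y=g$. Substituting $z=\pm(y-f)$ reduces this to
\[
y+\mathcal{A}y=f\pm g\in L^2(\Omega).
\]
By Lemma~\ref{07.08.L1} with $V=1$ and $\bs f=0$ (or Lemma~\ref{08.13.L3} with $\lambda=1$, $\theta=0$), this has a unique solution $y\in\mathcal{D}(\mathcal{A})$. Then $z=\pm(y-f)\in H_0^1(\Omega;w)=\mathcal{D}(\mathcal{A}^{1/2})$, because both $y$ and $f$ lie there. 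So $U=(y,z)$ is in the domain.

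\emph{Step 3: conclusion.} Skew-symmetry together with surjectivity of $I\pm\mathfrak{A}$ gives skew-adjointness. Indeed, if $V\in\mathcal{D}(\mathfrak{A}^*)$, pick $W$ in the domain with $(I-\mathfrak{A})W=(I+\mathfrak{A}^*)V$. Since $\mathfrak{A}^*\supset-\mathfrak{A}$, we get $(I+\mathfrak{A}^*)(V-W)=0$. Injectivity of $I+\mathfrak{A}^*$ follows from the surjectivity of $I-\mathfrak{A}$, so $V=W$. Stone's theorem then yields the unitary group. Equivalently, both $\pm\mathfrak{A}$ are m-dissipative, so Lumer--Phillips gives two contraction semigroups that combine into an isometric, hence unitary, group.

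I expect no serious obstacle. The only delicate point is Step~1: justifying the Green identity $(\mathcal{A}y,z)=\int w\nabla y\cdot\nabla z\,dx$ for $z\in H_0^1(\Omega;w)$ rather than for smooth functions. I would handle it either by density of $C_0^\infty(\Omega)$ in $H_0^1(\Omega;w)$ (this is the definition of that space), or more cleanly by the spectral representation. In spectral form the group is explicit: for $U=(y,z)$ with coefficients $(y_n,z_n)$ in the basis $\{\Phi_n\}$, each mode evolves by the rotation
\[
\begin{pmatrix}\cos(\sqrt{\lambda_n}t) & \lambda_n^{-1/2}\sin(\sqrt{\lambda_n}t)\\ -\lambda_n^{1/2}\sin(\sqrt{\lambda_n}t) & \cos(\sqrt{\lambda_n}t)\end{pmatrix}\begin{pmatrix}y_n\\ z_n\end{pmatrix}.
\]
This map preserves $\lambda_n y_n^2+z_n^2$, and from it strong continuity can be checked directly by dominated convergence on the series.
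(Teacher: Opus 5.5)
Your proof is correct, but it takes a different route from the paper.

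\textbf{The paper's route.} The paper works through the Hille--Yosida theorem for groups (Pazy, Thm.~1.6.3). For $\bs f\in\mcD(\mcA)\ts\mcD(\mcA^{1/2})$ and real $\la\neq0$ it constructs the resolvent explicitly:
\begin{itemize}
\item it solves the two scalar problems $w_i+\la^2\mcA w_i=f_i$ and sets $u_1=w_1+\la w_2$, $u_2=w_2-\la\mcA w_1$;
\item it uses Lemma~\ref{08.13.L3} to place $(u_1,u_2)$ in $\mcD(\mcA^{3/2})\ts\mcD(\mcA)$;
\item it derives the identity $\|\bs f\|_E^2=\|\bs u\|_E^2+\la^2\|u_2\|_{H_0^1(\Om;w)}^2+\la^2\|\mcA u_1\|_{L^2(\Om)}^2$;
\item it extends to all data by density and closedness;
\item from $\|(\mu I-\mf{A})^{-1}\|\le(|\mu|-1)^{-1}$ it concludes that $\mf{A}$ generates a group with $\|\mcS(t)\|\le e^{|t|}$.
\end{itemize}

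\textbf{Your route.} Stone's theorem, via skew-symmetry plus the range condition, needs only one scalar elliptic solve, $y+\mcA y=f\pm g$, from Lemma~\ref{07.08.L1}. It needs no higher-regularity lemma and no density extension. It also delivers unitarity directly.

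That last point is genuinely in your favor. The bound $\|\mcS(t)\|\le e^{|t|}$ that the paper records does not by itself give a unitary group. To get unitarity one must keep the sharper inequality $\|\bs u\|_E\le\|\bs f\|_E$, which the paper's computation yields for every real $\la$. That inequality is exactly your observation that both $\pm\mf{A}$ are dissipative. What the paper's approach buys in return is an explicit formula for the resolvent.

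\textbf{Two small corrections.}
\begin{itemize}
\item In Step~3, injectivity of $I+\mf{A}^*$ comes from surjectivity of $I+\mf{A}$, not of $I-\mf{A}$, because $\ker(I+\mf{A}^*)=\mathrm{Ran}(I+\mf{A})^\perp$. This is harmless, since you proved both range conditions.
\item For the Green identity in Step~1, rely on your density argument (or on Lemma~\ref{09.03.L2}) rather than citing Definition~\ref{def:conormal} literally. The identity printed there has the sign of the $\int_\Om(\mcA y)v\,\df x$ term reversed. With $v|_\Ga=0$ it would give $(\mcA y,v)_{L^2(\Om)}=-\int_\Om w\nabla y\cdot\nabla v\,\df x$.
\end{itemize}
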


\begin{proof}
	 Take $E=H_0^1(\Om;w)\ts L^2(\Om)$. 
	
	{\it Step 1}.  It is clear that $\mf{A}$ is closed since $\mcA$ is a self-adjoint operator. 
	From (2) in Remark \ref{08.13.R1}, we know that $\mcD(\mcA)\ts\mcD(\mcA^\f{1}{2})$ is dense in $E$. 
	
	{\it Step 2}. For each $\bs f\in \mcD(\mcA^\f{1}{2})\ts L^2(\Om)$ and real $\la$ satisfying $0<|\la|<1$, the equation 
	\begin{equation*}
		\bs u-\la \mf{A}\bs u=f
	\end{equation*}
	has a unique solution $\bs u\in \mcD(\mcA)\ts \mcD(\mcA^\f{1}{2})$ and 
	\begin{equation}\label{08.14.2}
		\|\bs f\|_E^2\geq (1-|\la|)\|\bs u\|_E^2. 
	\end{equation}
	
	Indeed, let $\bs{f}=(f_1,f_2)\in \mcD(\mcA)\ts \mcD(\mcA^\f{1}{2})$.  Let $0\neq \la\in \R$, and let $w_1,w_2$ be the solutions of 
	\begin{equation*}
		w_i+\la^2\mcA w_i=f_i, \ i=1,2. 
	\end{equation*}
	Taking $u_1=w_1+\la w_2, u_2=w_2-\la \mcA w_1$, then from Lemma \ref{08.13.L3} we have $\bs{u}=(u_1,u_2)\in \mcD(\mcA^{\f{3}{2}})\ts \mcD(\mcA)$ is a solution of $\bs{u}-\la \mf{A}\bs{u}=\bs{f}$ with $\bs{f}=(f_1,f_2)$, and 
	\begin{equation*}
		\begin{split}
			u_1-\la u_2=f_1, \quad u_2+\la \mcA u_1=f_2. 
		\end{split}
	\end{equation*}
	From which, for all $0<|\la|<1$,  we have 
	\begin{equation*}
		\begin{split}
			\|\bs{f}\|_E^2
			&=\int_\Om |\nabla f_1|^2w\df x+\int_\Om f_2^2\df x=\int_\Om f_1\mcA f_1\df x+\int_\Om f_2^2\df x\\
			&=\int_\Om \left[(u_1-\la u_2)(\mcA u_1-\la \mcA u_2)+(u_2+\la \mcA u_1)^2\right]\df x\\
			&=\int_\Om|\nabla u_1|^2w\df x+\int_\Om u_2^2\df x+\la^2\int_\Om |\nabla u_2|^2w\df x+\la^2\int_\Om \left(\mcA u_1\right)^2\df x\\
			&\geq \|\bs{u}\|_E^2\geq (1-|\la|)^2 \|\bs{u}\|_E^2. 
		\end{split}
	\end{equation*}
	From (2) in Remark \ref{08.13.R1} we know that $\mcD(\mcA)\ts \mcD(\mcA^{\f{1}{2}})$ is dense in $\mcD(\mcA^\f{1}{2})\ts L^2(\Om)$, hence, for each $0<|\la|<1$, we have  \eqref{08.14.2}. 
	
	{\it Step 3}. 
	Note that $\bs u=(I-\la \mf{A})^{-1}\bs f$ for $0<|\la|<1$ from \eqref{08.14.2}, then, for each $|\mu|>1$, we have 
	\begin{equation*}
		\left\|(\mu I-\mf{A})^{-1}\right\|\leq \f{1}{|\mu|-1}. 
	\end{equation*}
	From Theorem 1.6.3 in \cite{Pazy} at pp.\! 23, it follows that $\mf{A}$ is the infinitesimal generator  of a group $\mcS(t)$ satisfying 
	\begin{equation*}
		\|\mcS(t)\|\leq e^{|t|}. 
	\end{equation*}
	
	{\it Step 4}. Return to the initial value problem \eqref{08.14.3}, 
	set 
	\begin{equation}\label{08.16.1}
		(y(x,t), z(x,t))=\mcS(t)(y_0(x),y_1(x)),
	\end{equation}
	then we have 
	\begin{equation*}
		\f{\pt}{\pt t}(y,z)=\mf{A}(y,z)=(z, -\mcA y), 
	\end{equation*}
	and hence $y$ is a solution of \eqref{08.14.3}. 
\end{proof}

\begin{remark}\label{08.14.R1}
	From Theorem \ref{09.02.T1} and Lemma \ref{08.13.L2}, we know that the mild solution of \eqref{08.14.3} and the weak solution of \eqref{08.14.3} are the same. 
\end{remark}

\subsection{Mild solution with non-homogeneous boundary condition}

We combine the lifting via the Dirichlet map from Subsection 2.2 with the semigroup/cosine formulation from Section 2.3 to obtain a mild solution representation for the boundary-input problem.

In this section, we shall establish the mild solution of the equation 
\begin{equation}\label{09.18.1}
	\begin{cases} 
		\pt_{tt}y-\Div(w\nabla y)=0, &\mbox{in }Q,\\
		y(0)=y_0, \pt_t y(0)=y_1, &\mbox{in }\Om,\\
		y=u, &\mbox{on }\Sigma, 
	\end{cases}
\end{equation}
where $u\in H^1(\Sigma)$. This follows from \cite{Lasiecka1,Lasiecka2}.

The operator 
\begin{equation*}
	\mcA: L^2(\Om)\sps \mcD(\mcA)\ra L^2(\Om)
\end{equation*}
generates a strongly continuous (s.c.) cosine operator $C(t)$ (see Lemma \ref{08.13.L2}, or Theorem 5.9 in \cite{Fattorini} at pp.\! 91) on $L^2(\Om)$ with 
\begin{equation*}
	S(t)z=\int_0^tC(\tau)z\df\tau, \ z\in L^2(\Om). 
\end{equation*}
Indeed, we have 
\begin{equation*}
	\mcS(t)=
	\begin{pmatrix}
		C(t) & S(t) \\
		\mcA S(t) &C(t)
	\end{pmatrix}, 
\end{equation*}
where $\mcS$ is defined in \eqref{08.16.1}. 
And the solution of \eqref{08.14.3} is 
\begin{equation}\label{08.15.7}
	y=C(t)y_0+S(t)y_1. 
\end{equation}
Moreover, we have 
\begin{equation}\label{08.19.1}
	\begin{split}
		C(t)y_0
		&=\sum_{n=1}^\iy \cos \la_n^\f{1}{2}t(y_0, \Phi_n)_{L^2(\Om)}\Phi_n, \ y_0\in H_0^1(\Om;w),\\
		S(t)y_1
		&=\sum_{n=1}^\iy \la_n^{-\f{1}{2}}\sin \la_n^\f{1}{2}t (y_1,\Phi_n)_{L^2(\Om)}\Phi_n, \ y_1\in L^2(\Om). 
	\end{split}
\end{equation}
It is easily verified that for all $\theta\geq 0$ and  $z\in \mcD(\mcA^\theta)$ we have 
\begin{equation}\label{08.18.3}
	C(t)z=\sum_{n=1}^\iy \cos\la_n^\f{1}{2} t(z, \Phi_n)_{L^2(\Om)}\Phi_n\in \mcD(\mcA^\theta).
\end{equation}
An important property is 
\begin{equation}\label{08.15.4}
	\begin{split}
		&S(t)L^2(\Om)\s \mcD(\mcA^\f{1}{2}), \mbox{and }\\
		&\mbox{the map}: t\mapsto \mcA^\f{1}{2}S(t) z \mbox{ is   continuous}, z\in L^2(\Om), \mbox{and }\\ &\|\mcA^\f{1}{2}S(t)\|_{\mcL(L^2(\Om))}\leq M_\ga e^{\ga t}, t\in\R, 
	\end{split}
\end{equation}
easily proved for $\mcA$ self-adjoint. Moreover, we have 
\begin{equation}\label{09.03.3}
	\f{\df }{\df t}C(t)z=-\mcA S(t)z \mbox{ for } z\in \mcD(\mcA^\f{1}{2}),\quad \f{\df^2}{\df t^2}C(t)z=-\mcA C(t)z \mbox{ for } z\in \mcD(\mcA), 
\end{equation}
and 
\begin{equation*}
	C(t)z-z=-\mcA\int_0^t\tau C(t-\tau)z\df \tau \mbox{ for } z\in L^2(\Om), 
\end{equation*}
and 
\begin{equation*}
	C(t)z-z=-\mcA\int_0^tS(t-\tau)z\df\tau=-\mcA\int_0^tS(\si)z\df\si \mbox{ for } z\in L^2(\Om). 
\end{equation*}

\subsection{Non-homogeneous Dirichlet boundary input and Dirichlet map}

We now consider the degenerate wave equation with non-homogeneous Dirichlet boundary input. Unlike the homogeneous Dirichlet case treated above, the boundary datum enters the evolution through an unbounded control action, and a direct reduction $y=z+Du$ would formally produce a forcing term involving time derivatives of $Du(t)$. Since our standing assumption on the boundary input is $u\in H^1(\Sigma)$, we do \emph{not} rely on second time derivatives of $Du(t)$. Instead, we follow the standard semigroup/cosine-operator approach for boundary control (see \cite{Lasiecka1,Lasiecka2}) and define the contribution of the boundary input through an admissible operator expressed in terms of the Dirichlet map $D$ and the sine family $S(t)$ generated by $\mcA$.

Recall that $D: H^{\f{1}{2}}(\Ga)\to H^1(\Om;w)$ is defined in Subsection 2.2 (see \eqref{08.14.5}).

From above, we obtain the solution of \eqref{07.08.01} in the following 

\begin{lemma}\label{08.19.L1}
	Under Assumption \ref{06.27.A1}, let $\mcA$ be the self-adjoint operator introduced above, and let $\mcA$ generate a cosine operator denoted by $C(t)$. Assume $u\in H^1(\Sigma)$. Then for each $y_0\in H_0^1(\Om;w)$ and $y_1\in L^2(\Om)$, the equation \eqref{07.08.01} has 
	\begin{equation}\label{08.15.6}
		y=C(t)y_0+S(t)y_1+(Lu)(t), 
	\end{equation}
	where 
	\begin{equation}\label{09.03.2}
		(Lu)(t)=\mcA\int_0^tS(t-\tau)Du(\tau)\df\tau. 
	\end{equation}
	The integral belongs to the natural energy scale associated with $\mcA$ and the action of $\mcA$ in \eqref{09.03.2} is understood in the extrapolation/dual space (equivalently in $V'$), as in the boundary-control framework of \cite{Lasiecka1,Lasiecka2}.
	Moreover, we have 
	\begin{equation}\label{09.03.4}
		\begin{split} 
			\pt_ty 
			&=\mcA S(t) y_0+C(t) y_1+\mcA \int_0^t C(t-\tau)Du(\tau)\df \tau,\\
			(L_tu)(t)
			&=\left(\f{\df L}{\df t}u\right)(t)=\mcA\int_0^t C(t-\tau)Du(\tau)\df\tau. 
		\end{split} 
	\end{equation}
\end{lemma}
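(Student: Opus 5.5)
We derive the representation \eqref{08.15.6} in the classical Lasiecka--Triggiani way: first for smooth inputs through the lifting $y=z+Du$, then by density using the a priori estimates of Theorem~\ref{09.02.T1} and Lemma~\ref{09.04.L1}. The plan has four steps.

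\emph{Step 1: smooth inputs.} Assume first that $u\in C^2([0,T];H^1(\Ga))$ with $u(0)=\pt_tu(0)=0$ on $\Ga$; this is compatible with (4) of Assumption~\ref{06.27.A1}. Set $z=y-Du$. Since $ADu(t)=0$ and $Du(t)|_\Ga=u(t)$, the function $z$ satisfies, with zero Dirichlet data,
\[
\pt_{tt}z+\mcA z=-D\pt_{tt}u,\qquad z(0)=y_0-Du(0),\quad \pt_tz(0)=y_1-D\pt_tu(0).
\]
By Lemma~\ref{08.20.L1} the forcing lies in $L^1(0,T;L^2(\Om))$. Theorem~\ref{thm:2.12} together with the cosine formulas \eqref{08.15.7}--\eqref{08.19.1} then gives the variation of constants formula
\[
z(t)=C(t)z(0)+S(t)\pt_tz(0)-\int_0^tS(t-\tau)D\pt_{tt}u(\tau)\df\tau .
\]

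\emph{Step 2: integration by parts in time.} We integrate by parts twice in $\tau$, using $\f{\df}{\df\tau}S(t-\tau)=-C(t-\tau)$ and $\f{\df}{\df \tau}C(t-\tau)=\mcA S(t-\tau)$ from \eqref{09.03.3}. The latter identity must be read in $\mcD(\mcA^{-1/2})=H^{-1}(\Om;w)$, since $Du(\tau)\in H^1(\Om;w)$ need not lie in $\mcD(\mcA^{1/2})$; this is where Lemma~\ref{09.03.L2} and the extrapolation scale of Remark~\ref{08.13.R2} enter. The boundary terms at $\tau=t$ produce $Du(t)$, which cancels the lifting. The terms at $\tau=0$ combine with $C(t)z(0)+S(t)\pt_tz(0)$ and vanish because of the compatibility conditions. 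What remains is
\[
y(t)=C(t)y_0+S(t)y_1+\mcA\int_0^tS(t-\tau)Du(\tau)\df\tau .
\]
To justify the identity
\[
Du-C(t-\tau)Du=\mcA\int S\,Du
\]
when $Du\notin\mcD(\mcA)$, we verify it coefficientwise against each $\Phi_n$ using the eigenexpansions \eqref{08.19.1}, where it reduces to scalar calculus.

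\emph{Step 3: the time derivative.} Differentiating \eqref{08.15.6} and using $\f{\df}{\df t}S(t)=C(t)$, $S(0)=0$ and \eqref{09.03.3} gives \eqref{09.03.4}. Again we check this mode by mode: the $n$-th coefficient of $(Lu)(t)$ is
\[
\la_n^{1/2}\int_0^t\sin\big(\la_n^{1/2}(t-\tau)\big)\,(Du(\tau),\Phi_n)\df\tau,
\]
and differentiating it in $t$ gives the corresponding coefficient of $\mcA\int_0^tC(t-\tau)Du\df\tau$.

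\emph{Step 4: general $u\in H^1(\Sigma)$.} We approximate $u$ by smooth $u_k$ satisfying the compatibility condition. The maps $u\mapsto Lu$ and $u\mapsto L_tu$ converge in $C([0,T];H^{-1}(\Om;w))$ and in $C([0,T];\mcD(\mcA^{-1}))$ (coefficientwise) by Lemma~\ref{09.04.L1} applied to the solution with zero initial data. Meanwhile $y_k\to y$ in the energy space by \eqref{09.04.1}. Hence the formula passes to the limit and identifies the solution of Theorem~\ref{09.02.T1}.

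The main obstacle is Step 2: making sense of $\mcA$ applied to $\int S(t-\tau)Du\,d\tau$ when only $\mcD(\mcA^{-\theta})$ with $\theta\le\f12$ is available (Remark~\ref{09.03.R1}). We handle it by working with Fourier coefficients throughout and showing that the resulting series converges in $H^{-1}(\Om;w)$. For $(Lu)(t)$ this uses the integration by parts in $\tau$, which trades one power of $\la_n^{1/2}$ for $\pt_tu\in L^2$.
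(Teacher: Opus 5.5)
Your argument is sound, and it is more explicit than the paper's. The paper proves the lemma only by citation: it points to (3.5) in \cite{Lasiecka1} and Theorem~3.1 in \cite{Lasiecka2}, noting that the latter is purely semigroup-theoretic. It never checks how that argument sits in the weighted setting.

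You actually carry out the Lasiecka--Triggiani derivation here. You lift $z=y-Du$ for smooth compatible inputs, use the variation-of-constants formula from Theorem~\ref{thm:2.12}, integrate by parts twice in $\tau$, and pass to the limit by density. This shows that the main degenerate-specific inputs are the boundedness of $D$ (Lemma~\ref{08.20.L1}) and the identity $\frac{\df}{\df\tau}C(t-\tau)x=\mcA S(t-\tau)x$, read in $\mcD(\mcA^{-1/2})=H^{-1}(\Om;w)$. Working coefficientwise against $\Phi_n$ sidesteps the obstruction in Remark~\ref{09.03.R1}, namely that $\mcD(\mcA^{-\theta})$ for $\theta>\frac12$ is not a space of distributions. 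The citation gains brevity and access to the Lasiecka--Triggiani regularity theory for $L$, but leaves this transfer unverified.

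Three points need fixing.

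\emph{The sign in Step~3.} Step~3 does not yield \eqref{09.03.4} as printed. By \eqref{09.03.3}, $\frac{\df}{\df t}C(t)y_0=-\mcA S(t)y_0$, so differentiating \eqref{08.15.6} gives $\pt_t y=-\mcA S(t)y_0+C(t)y_1+(L_tu)(t)$. The plus sign in the first line of \eqref{09.03.4}, and in the lower-left entry of the paper's matrix $\mcS(t)$, comes from the Lasiecka--Triggiani convention, where the generator is negative. Your mode-by-mode check of the $L_tu$ line is correct. You should state the corrected first line rather than claim the printed one.

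\emph{The role of compatibility.} The $\tau=0$ boundary terms cancel against $C(t)z(0)+S(t)\pt_tz(0)$ identically, whatever $u(0)$ and $\pt_tu(0)$ are. What $u(0)=0$ actually buys is $z(0)=y_0-Du(0)\in H_0^1(\Om;w)$, so that Theorem~\ref{thm:2.12} applies. The condition $\pt_tu(0)=0$ is unnecessary, though harmless.

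\emph{Circularity in Step~4.} You invoke \eqref{09.04.1}, but the paper's proof of Theorem~\ref{09.02.T1} appeals to Corollary~\ref{09.03.C1}, which is itself deduced from the present lemma. To keep the argument non-circular, identify the limit using Lemma~\ref{09.04.L1} alone:
\begin{itemize}
\item By Green's formula, valid since $w\in C^1$ and $w\ge\La$ near $\Ga$, the smooth solutions $y_k=z_k+Du_k$ satisfy the transposition identity \eqref{09.12.1}.
\item Hence, by Lemma~\ref{09.04.L1}, $y_k$ converges in $C([0,T];L^2(\Om))$ to the transposition solution.
\item The convergence of $Lu_k$ needs no appeal to Lemma~\ref{09.04.L1}. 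It follows from the bound $\sum_n\la_n^{-1}c_n(t)^2\le t\,\|Du\|^2_{L^2(0,t;L^2(\Om))}$ on the coefficients $c_n(t)=\la_n^{1/2}\int_0^t\sin\la_n^{1/2}(t-\tau)\,(Du(\tau),\Phi_n)_{L^2(\Om)}\df\tau$, together with Lemma~\ref{08.20.L1}.
\end{itemize}
This gives $Lu_k\to Lu$ in $C([0,T];H^{-1}(\Om;w))$, which identifies the limit.
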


\begin{proof}
	This representation is a standard consequence of the cosine-operator approach for Dirichlet boundary control; see (3.5) in \cite{Lasiecka1} at p.\! 172, or (a) at p.\! 39 and Theorem 3.1 at p.\! 42 in \cite{Lasiecka2}. It should be emphasized that the proof of Theorem 3.1 in \cite{Lasiecka2} makes use exclusively of semigroup theory. 
\end{proof}

\begin{corollary}\label{08.19.C2}
	Under conditions in Lemma \ref{08.19.L1}, we have 
	\begin{equation*}
		\begin{split}
			(Lu)(t)
			&=-\sum_{n=1}^\iy \left\{\la_n^{-\f{1}{2}}\int_0^t \sin \la_n^\f{1}{2}(t-\tau) \left(u(\tau), \f{\pt\Phi_n}{\pt\nu_\mcA}\right)_{L^2(\Ga)}\df\tau\right\} \Phi_n,\\
			(L_tu)(t)
			&=-\sum_{n=1}^\iy \left\{\int_0^t\cos \la_n^\f{1}{2}(t-\tau)\left(u(\tau), \f{\pt\Phi_n}{\pt \nu_\mcA}\right)_{L^2(\Ga)} \df\tau\right\}\Phi_n. 
		\end{split}
	\end{equation*}
\end{corollary}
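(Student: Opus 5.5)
The plan is to expand $(Lu)(t)$ and $(L_tu)(t)$ from \eqref{09.03.2} and \eqref{09.03.4} in the eigenbasis $\{\Phi_n\}$. This means computing the Fourier coefficients $\lg (Lu)(t),\Phi_n\rg$, with the pairing taken in the extrapolation sense of Lemma~\ref{08.19.L1}. Since $\mcA$ is self-adjoint and $\mcA\Phi_n=\la_n\Phi_n$, moving $\mcA$ onto $\Phi_n$ gives
\[
\lg (Lu)(t),\Phi_n\rg=\la_n\int_0^t\big(S(t-\tau)Du(\tau),\Phi_n\big)_{L^2(\Om)}\df\tau .
\]
By \eqref{08.19.1}, $(S(t-\tau)z,\Phi_n)=\la_n^{-1/2}\sin\la_n^{1/2}(t-\tau)\,(z,\Phi_n)$. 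Therefore everything reduces to computing $(Du(\tau),\Phi_n)_{L^2(\Om)}$.

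The key identity, which is the standard one from Lasiecka--Triggiani, is
\[
\la_n (D\psi,\Phi_n)_{L^2(\Om)}=-\Big(\psi,\f{\pt\Phi_n}{\pt\nu_\mcA}\Big)_{L^2(\Ga)}.
\]
I will obtain it by applying Green's formula (Definition~\ref{def:conormal}) twice. First take $y=\Phi_n$, which lies in $H^2(\Om;w)$ with $A\Phi_n=\la_n\Phi_n\in L^2$, and test against $v=D\psi\in H^1(\Om;w)$:
\[
\la_n(\Phi_n,D\psi)+\int_\Om w\nabla\Phi_n\cdot\nabla D\psi\,\df x=\Big\lg\f{\pt\Phi_n}{\pt\nu_\mcA},\psi\Big\rg .
\]
Next take $y=D\psi$, with $AD\psi=0$, and test against $v=\Phi_n$, whose trace vanishes:
\[
\int_\Om w\nabla D\psi\cdot\nabla\Phi_n\,\df x=0 .
\]
Subtracting the two gives the identity, up to the sign convention for $\mcA=-\Div(w\nabla\cdot)$. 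I will check the sign carefully against the stated result; the minus sign should come from the conormal term.

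For $\pt_{\nu_\mcA}\Phi_n$ to be a genuine $L^2(\Ga)$ function, I will use Lemma~\ref{07.08.L1}. With $V=0$ and $f=\la_n\Phi_n$ it gives $\Phi_n\in H^2(\mcO(\Ga;\be))$, so the conormal derivative lies in $H^{1/2}(\Ga)\subset L^2(\Ga)$. The pairing then becomes an $L^2(\Ga)$ inner product, because $u(\tau)\in H^1(\Ga)$ for a.e.\ $\tau$, since $u\in H^1(\Sigma)$. I also need $D\psi\in H^1(\Om;w)$, which is Lemma~\ref{08.20.L1}.

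Substituting the identity, the factor $\la_n\cdot\la_n^{-1/2}\cdot\la_n^{-1}$ collapses to $\la_n^{-1/2}$, which yields the first formula. The second formula follows either in the same way using $(C(t-\tau)z,\Phi_n)=\cos\la_n^{1/2}(t-\tau)(z,\Phi_n)$, or by differentiating the first series term by term in $t$; the boundary term at $\tau=t$ vanishes because $\sin 0=0$.

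The main obstacle is rigor in moving $\mcA$ across. The object $\mcA\int_0^t S(t-\tau)Du\,\df\tau$ lives only in an extrapolation space, and $D\psi\notin\mcD(\mcA^{1/2})$, so the pairing with $\Phi_n$ has to be justified. I would handle this through the duality definition: for each fixed $n$, the functional $\lg \mcA g,\Phi_n\rg:=(g,\mcA\Phi_n)$ is well defined because $\Phi_n\in\mcD(\mcA)$. The series expansion then converges in $\mcD(\mcA^{-1/2})$, or in a weaker extrapolation space, by the bound on $\|\pt_{\nu_\mcA}\Phi_n\|_{L^2(\Ga)}$ coming from Lemma~\ref{09.04.L3}. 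As a fallback, one can first prove the formula for smooth $u$ and then pass to the limit.
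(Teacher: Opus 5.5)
Your proposal is correct and is essentially the paper's argument: expand via \eqref{08.19.1}, let $\mcA$ act as multiplication by $\la_n$ on the $n$-th coefficient, and evaluate $(Du(\tau),\Phi_n)_{L^2(\Om)}=-\la_n^{-1}\big(u(\tau),\pt_{\nu_\mcA}\Phi_n\big)_{L^2(\Ga)}$ by Green's formula, with $\Phi_n\in H^2(\mcO(\Ga;\be))$ supplied by Lemma~\ref{07.08.L1}; you also work out $L_tu$, which the paper only cites.

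Two points to tidy. On the sign you deferred: Definition~\ref{def:conormal} as printed gives the $\int_\Om (Ay)v\,\df x$ term the wrong sign. The correct identity is $\int_\Om (Ay)v\,\df x=\int_\Om w\nabla y\cdot\nabla v\,\df x-\lg \pt_{\nu_\mcA}y, v|_\Ga\rg$, and with it your two-step subtraction yields exactly the minus sign in the statement. On convergence: the conormal bound from Lemma~\ref{09.04.L3} only gives bounded coefficients, which is not enough. Convergence of the series in $\mcD(\mcA^{-\f{1}{2}})$ follows directly from $\int_0^tS(t-\tau)Du(\tau)\,\df\tau\in\mcD(\mcA^{\f{1}{2}})$ together with Lemma~\ref{09.03.L2}.
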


\begin{proof}
	This is (5.8a) and (5.8b) in \cite{Lasiecka2} at pp.\! 51. We just verify $Lu$ in the following. 
	
	From Lemma \ref{07.08.L1'} we have $\Phi_n\in H^2(\mcO(\Ga; \be))$, by \eqref{08.19.1} and the definition of $Du(\tau)$,   then
	\begin{equation*}
		\begin{split}
			(Du(\tau), \Phi_n)_{L^2(\Om)}
			&=\la_n^{-1}\int_\Om [Du(\tau)]\mcA\Phi_n\df x=-\la_n^{-1}\int_\Ga \f{\pt \Phi_n}{\pt\nu_\mcA} u(\tau)\df S.
		\end{split}
	\end{equation*}
	Hence, we obtain 
	\begin{equation}\label{08.20.1}
		\begin{split} 
		(Lu)(t)
		&=\sum_{n=1}^\iy\la_n^{-\f{1}{2}} \mcA\left(\int_0^t \sin\la_n^\f{1}{2}(t-\tau)\left(Du(\tau),\Phi_n\right)_{L^2(\Om)}\df \tau\hspace{1mm}\Phi_n\right)\\
		&=\sum_{n=1}^\iy \la_n^\f{1}{2}\int_0^t\sin \la_n^\f{1}{2}(t-\tau)(Du(\tau), \Phi_n)_{L^2(\Om)}\df\tau\Phi_n\\
		&=-\sum_{n=1}^\iy \left\{\la_n^{-\f{1}{2}}\int_0^t \sin \la_n^\f{1}{2}(t-\tau) \left(u(\tau), \f{\pt\Phi_n}{\pt\nu_\mcA}\right)_{L^2(\Ga)}\df\tau\right\} \Phi_n.
		\end{split} 
	\end{equation}
	This completes the proof. 
\end{proof}

\begin{corollary}\label{09.03.C1}
	Under Assumption \ref{06.27.A1}. Let $u\in H^{1}(\Sigma)$. Then  
	\begin{equation*}
		Lu\in C([0,T]; H^1(\Om;w)),\quad L_tu\in C([0,T]; L^2(\Om)), 
	\end{equation*}
	where $Lu, L_tu$ are defined in \eqref{08.15.6} and \eqref{09.03.4} respectively. 
\end{corollary}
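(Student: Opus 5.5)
The plan is to integrate by parts in time inside the convolution defining $Lu$, so that the unbounded $\mcA$ acting on the Dirichlet lift is traded for a time derivative of $u$. The compatibility condition $u(0)=0$ on $\Ga$ from Assumption \ref{06.27.A1}(4) removes the boundary term at $\tau=0$.

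Since $u\in H^1(\Sigma)$, we have $u\in C([0,T];L^2(\Ga))$ and $\pt_t u\in L^2(0,T;L^2(\Ga))$. I will work with $u$ smooth in $(\xi,t)$ and pass to the limit by density at the end. The key identity comes from $\f{\df}{\df\tau}S(t-\tau)z=-C(t-\tau)z$ together with $\mcA\int_0^t S(\si)z\,\df\si=z-C(t)z$. These give
\begin{equation*}
(Lu)(t)=\mcA\int_0^tS(t-\tau)Du(\tau)\df\tau=Du(t)-C(t)Du(0)-\int_0^tC(t-\tau)D\pt_tu(\tau)\df\tau .
\end{equation*}
Because $u(0)=0$, the middle term vanishes. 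Similarly,
\begin{equation*}
(L_tu)(t)=\mcA\int_0^tC(t-\tau)Du(\tau)\df\tau=\mcA S(t)Du(0)+\int_0^t\mcA S(t-\tau)D\pt_tu(\tau)\df\tau ,
\end{equation*}
and the first term again vanishes. The same formulas can be checked mode by mode on the series in Corollary \ref{08.19.C2}: integrating $\sin\la_n^{1/2}(t-\tau)$ or $\cos\la_n^{1/2}(t-\tau)$ by parts against $(u(\tau),\pt_{\nu_\mcA}\Phi_n)_{L^2(\Ga)}$, and using $(D\psi,\Phi_n)=-\la_n^{-1}(\psi,\pt_{\nu_\mcA}\Phi_n)_{L^2(\Ga)}$.

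With these representations the conclusions follow. For $L_tu$, the bound \eqref{08.15.4} gives $\|\mcA^{1/2}S(s)\|\le M$, so $\mcA S(t-\tau)D\pt_tu(\tau)=\mcA^{1/2}\,\mcA^{1/2}S(t-\tau)\,D\pt_tu(\tau)$. It therefore lies in $L^2(\Om)$ provided $D\pt_tu(\tau)\in\mcD(\mcA^{1/2})$, which is false since $D\psi$ does not vanish on $\Ga$. I expect this to be the main obstacle.

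My first attempt would use $\pt_t u\in L^2(0,T;L^2(\Ga))$ only, and proceed by duality via Lemma \ref{09.04.L3}. For $\phi\in L^2(\Om)$,
\begin{equation*}
\Big(\int_0^t\mcA C(t-\tau)D\pt_tu\,\df\tau,\phi\Big)=-\int_0^t\big(\pt_tu(\tau),\pt_{\nu_\mcA}S(t-\tau)\phi\big)_{L^2(\Ga)}\df\tau .
\end{equation*}
Lemma \ref{09.04.L3}, applied to the homogeneous solution with data $(0,\phi)$, bounds $\|\pt_{\nu_\mcA}S(\cdot)\phi\|_{L^2(\Sigma)}\le C\|\phi\|_{L^2}$. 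Hence $\|L_tu(t)\|_{L^2}\le C\|\pt_tu\|_{L^2(\Sigma)}$. (The same duality works for $L_tu$ written directly with $u$, if one prefers.)

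For $Lu$, I take $Du(t)\in C([0,T];H^1(\Om;w))$ from Lemma \ref{08.20.L1}, using $u\in C([0,T];H^{1/2}(\Ga))$, which holds for $H^1(\Sigma)$ by interpolation. It then remains to show that $\int_0^tC(t-\tau)D\pt_tu\,\df\tau\in H^1(\Om;w)$. Equivalently, $\mcA^{1/2}$ applied to it should be bounded in $L^2$, and this is handled by the same duality argument with $\mcA^{1/2}$ in place of $\mcA$. The relevant boundary observation is that of $\mcA^{-1/2}$-regularized data, which Lemma \ref{09.04.L3} controls with room to spare.

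Continuity in $t$ follows from strong continuity of $C,S$ and dominated convergence for smooth $u$. The uniform estimate in terms of $\|u\|_{H^1(\Sigma)}$ then carries continuity over to general $u$ through density of smooth compatible data in $\{u\in H^1(\Sigma): u(0)=0\}$.
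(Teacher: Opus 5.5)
Your argument is correct in substance, but it takes a genuinely different route from the paper's. The paper's proof is one line. With $y_0=y_1=0$ it identifies $Lu$, via Lemma \ref{08.19.L1}, with the solution of \eqref{09.02.1} for $f=0$ and boundary datum $u$. It then reads off $C([0,T];H^1(\Om;w))\cap C^1([0,T];L^2(\Om))$ from Theorem \ref{09.02.T1}.

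You instead work directly with \eqref{09.03.2}--\eqref{09.03.4}. Integration by parts in time, with $u(0)=0$ removing the endpoint terms, trades $\mcA$ for $\pt_tu$. The terms $\int_0^t\mcA S(t-\tau)D\pt_tu\,\df\tau$ and $\mcA^{1/2}\int_0^tC(t-\tau)D\pt_tu\,\df\tau$ are then bounded in $L^2(\Om)$ by duality against the hidden-regularity estimate of Lemma \ref{09.04.L3}, for data $(0,\phi)$ and $(\mcA^{-1/2}\phi,0)$. Finally, $Du\in C([0,T];H^1(\Om;w))$ follows from Lemma \ref{08.20.L1} and $H^1(\Sigma)\subset C([0,T];H^{1/2}(\Ga))$.

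The paper's route is shorter. However, as written, the proof of Theorem \ref{09.02.T1} itself cites this very corollary, so the two proofs lean on each other. It also relies on the imported identification in Lemma \ref{08.19.L1}. Your route uses only Lemma \ref{09.04.L3} (which rests on the homogeneous estimate of Theorem \ref{thm:2.12}), Lemma \ref{08.20.L1} and the cosine identities. It therefore breaks that loop and shows exactly where the compatibility condition enters. It also yields the explicit bounds $\|L_tu(t)\|_{L^2(\Om)}\le C\|\pt_tu\|_{L^2(\Sigma)}$ and $\|Lu(t)\|_{H^1(\Om;w)}\le C\|u\|_{H^1(\Sigma)}$. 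This is essentially the Lasiecka--Lions--Triggiani argument carried over to the weighted setting.

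Some details need fixing.
\begin{itemize}
\item In your duality display the integrand must be $\mcA S(t-\tau)D\pt_tu$, as in your formula for $L_tu$, not $\mcA C(t-\tau)D\pt_tu$. With $C$, the right-hand side would involve $\pt_{\nu_\mcA}C(\cdot)\phi$, which Lemma \ref{09.04.L3} controls only for $\phi\in H_0^1(\Om;w)$.
\item For the same reason your parenthetical remark is wrong. Duality applied to $\mcA\int_0^tC(t-\tau)Du\,\df\tau$ without moving the derivative onto $u$ gives only an $H^{-1}(\Om;w)$ bound, which is Lemma \ref{09.04.L1}. The $L^2$ bound genuinely needs the time integration by parts and $u(0)=0$.
\item The data $(\mcA^{-1/2}\phi,0)$ sit exactly at the energy level required by Lemma \ref{09.04.L3}, not ``with room to spare''.
\item Even for smooth $u$, the integrand $\mcA S(t-\tau)D\pt_tu(\tau)$ is not in general $L^2(\Om)$-valued, so dominated convergence does not apply directly. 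Integrate by parts once more, e.g.\ $\int_0^t\mcA S(t-\tau)D\pt_tu\,\df\tau=D\pt_tu(t)-C(t)D\pt_tu(0)-\int_0^tC(t-\tau)D\pt_{tt}u\,\df\tau$. Similarly, $\int_0^tC(t-\tau)D\pt_tu\,\df\tau=S(t)D\pt_tu(0)+\int_0^tS(t-\tau)D\pt_{tt}u\,\df\tau$ handles continuity in $\mcD(\mcA^{1/2})$ via \eqref{08.15.4}. After that, strong continuity of $C,S$ and your density argument finish the proof.
\end{itemize}
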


\begin{proof}
	Let $y_0=y_1=0$. Then we have $y=(Lu)(t)$ is the solution of the following equation
	\begin{equation*}
		\begin{cases}
			\pt_{tt}y+\mcA y=0, &\mbox{in }Q, \\
			y(0)=\pt_ty(0)=0, &\mbox{in }\Om,\\
			y=u, &\mbox{on }\Sigma. 
		\end{cases}
	\end{equation*}
	From Theorem \ref{09.02.T1}, we obtain 
	\begin{equation*}
		y\in C([0,T]; H^1(\Om;w))\cap C^1([0,T]; L^2(\Om)), 
	\end{equation*}
	and hence we have prove the corollary. 
\end{proof}

We emphasize that the representation \eqref{09.03.2}--\eqref{09.03.4} defines the control-to-state map $L$ for boundary inputs $u\in H^1(\Sigma)$ without requiring higher time regularity. In Section~3, this operator will be paired with solutions of the adjoint system through the boundary trace/conormal derivative, yielding the duality identity underlying the HUM criterion.

In summary, Section 2 has established the weak-solution framework and functional setting for the degenerate wave equation (Subsection 2.1). It then provides well-posedness and energy estimates for non-homogeneous Dirichlet boundary inputs via Theorem \ref{09.02.T1} and the bounds \eqref{09.04.1}--\eqref{09.04.2}. The semigroup/cosine formulation in Subsections 2.3 and 2.4 yields mild-solution representations compatible with these estimates. The Dirichlet map and lifting mechanism in Subsection 2.5 supply the bridge between boundary inputs and the homogeneous-boundary evolution framework. Sections 3 and 4 will build on these tools to develop the controllability analysis.

\section{Application: Approximate controllability}\label{S3}

Define $D^*$ as the adjoint operator of $D$ (see \eqref{08.14.5}) via the duality pairing
\begin{equation*}
	(Dv, z)_{L^2(\Om)}=\langle v, D^*z\rangle_{H^{1/2}(\Ga),\,H^{-1/2}(\Ga)}, 
\end{equation*}
then
\begin{equation*}
	D^*: L^2(\Om)\ra H^{-\f{1}{2}}(\Ga)
\end{equation*}
is a continuous operator, and  
\begin{equation*}
	-D^*\mcA z=\f{\pt z}{\pt\nu_\mcA}, \ z\in \mcD(\mcA)\cap H^2(\mcO(\Ga;\be)), 
\end{equation*}
in $H^{-1/2}(\Ga)$. By Theorem \ref{09.02.T1} we know that 
\begin{equation*}
	-D^*\mcA y=\f{\pt y}{\pt\nu_\mcA}
\end{equation*}
in $H^{-1/2}(\Ga)$ for each solution of \eqref{09.02.1}. 

Under Assumption \ref{06.27.A1}, for each $t>0$, denote  $E=H_0^1(\Om;w)\ts L^2(\Om)$, and 
\begin{equation*}
	K_t=\left\{
	\begin{pmatrix}
		y(t)\\
		\pt_ty(t)
	\end{pmatrix}\in E\colon y \mbox{ is the solution of  } \eqref{07.08.01} \mbox{ with } u\in H^1(\Sigma) \mbox{ and } y_0=y_1=0\right\}, 
\end{equation*}
and 
\begin{equation*}
	K=\bu_{t>0}K_t. 
\end{equation*}
It is clear that $K$ is the {\it attainable set} of equation \eqref{07.08.01}. 

\begin{definition}\label{09.14.D1}
	We say system \eqref{07.08.01} is {\it approximately controllable} if and only if $K$ is dense in $E$, i.e., $\overline{K}=E$. 
\end{definition}

\begin{lemma}\label{09.14.L1}
	$(z_0^*,z_1^*)\in K_t^\bot$ if and only if 
	\begin{equation*}
		\begin{split}
			D^*\mcA S(t-s)z_0^*+D^*\mcA C(t-s)z_1^*=0
		\end{split}
	\end{equation*}
	for all $s\in [0,t]$. 
\end{lemma}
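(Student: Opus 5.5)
Starting from the zero-data representation in Lemma \ref{08.19.L1}, every element of $K_t$ has the form $\big((Lu)(t),(L_tu)(t)\big)$ with $u\in H^1(\Sigma)$. Here
\[
(Lu)(t)=\mcA\int_0^tS(t-s)Du(s)\df s,\qquad (L_tu)(t)=\mcA\int_0^tC(t-s)Du(s)\df s .
\]
By Corollary \ref{09.03.C1} this pair lies in $E=H_0^1(\Om;w)\ts L^2(\Om)$. I will take the orthogonal complement with respect to the pairing of $E$ with its dual $E'=H^{-1}(\Om;w)\ts L^2(\Om)$, realized through the pivot $L^2(\Om)$. Thus $(z_0^*,z_1^*)\in K_t^\bot$ means
\[
\lg (Lu)(t),z_0^*\rg+\big((L_tu)(t),z_1^*\big)_{L^2(\Om)}=0\quad\text{for all }u\in H^1(\Sigma).
\]
The plan is to move all operators onto $(z_0^*,z_1^*)$ by duality. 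This should give an identity of the form $\int_0^t\lg u(s),G(s)\rg_{\Ga}\df s=0$ with
\[
G(s)=D^*\mcA S(t-s)z_0^*+D^*\mcA C(t-s)z_1^*,
\]
after which the density of $H^1(\Sigma)$ yields the equivalence. (If the complement is taken in the Hilbert inner product of $E$ instead, first apply the Riesz isomorphism $\mcA:H_0^1\to H^{-1}$ from Lemma \ref{09.03.L2} to convert the first component.)

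\textbf{Step 1 (duality on eigenfunction expansions).} I would compute with the series of Corollary \ref{08.19.C2} rather than with formal adjoints, since the action of $\mcA$ there is only defined in the extrapolation sense. For $z_0^*=\sum a_n\Phi_n$ and $z_1^*=\sum b_n\Phi_n$, the pairing equals
\[
-\sum_n\int_0^t\Big(u(s),\f{\pt\Phi_n}{\pt\nu_\mcA}\Big)_{L^2(\Ga)}\Big[a_n\la_n^{-1/2}\sin\la_n^{1/2}(t-s)+b_n\cos\la_n^{1/2}(t-s)\Big]\df s .
\]
Next I use $-D^*\mcA\Phi_n=\pt_{\nu_\mcA}\Phi_n$, valid because $\Phi_n\in\mcD(\mcA)\cap H^2(\mcO(\Ga;\be))$ by Lemma \ref{07.08.L1'}, together with \eqref{08.19.1}. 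The bracket summed over $n$ is then exactly $-\pt_{\nu_\mcA}$ of the homogeneous solution $\zeta(s)$ with data $(z_0^*,z_1^*)$ at time $t$, run backward, and this equals $G(s)$ in $H^{-1/2}(\Ga)$. Concretely, the $\sin$ term comes from $\mcA S(t-s)z_0^*$ acting in $H^{-1}$, and the $\cos$ term from $\mcA C(t-s)z_1^*$.

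\textbf{Step 2 (justifying the interchange).} This is the main obstacle. For $z^*$ in a finite span of the $\Phi_n$ everything is a finite sum, so the identity is immediate. For general $(z_0^*,z_1^*)\in E'$, I would pass to the limit using Lemma \ref{09.04.L3}: the map from data to $\pt_{\nu_\mcA}$ of the homogeneous solution is continuous into $L^2(\Sigma)$ for data in $H_0^1\ts L^2$. Applied one level down, via $\mcA^{-1}$ (equivalently, applied to $\mcA^{-1}$ of the data), it gives continuity of $s\mapsto G(s)$ into $H^{-1}(0,t;L^2(\Ga))$ or a comparable space. That is enough to pair against $u\in H^1$ in time. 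Together with the continuity of $u\mapsto((Lu)(t),(L_tu)(t))$ from Theorem \ref{09.02.T1}, both sides pass to the limit.

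\textbf{Step 3 (conclusion).} Given $\int_0^t\lg u(s),G(s)\rg\df s=0$ for all $u\in H^1(\Sigma)$, density of $H^1(\Sigma)$ in $L^2(\Sigma)$, and indeed in the relevant predual, gives $G\equiv 0$. Continuity in $s$, obtained from the strong continuity of $C$ and $S$ in \eqref{08.15.4}, then upgrades this to $G(s)=0$ for every $s\in[0,t]$. Conversely, if $G\equiv0$, the identity of Step 1 shows that every element of $K_t$ is annihilated, so $(z_0^*,z_1^*)\in K_t^\bot$.
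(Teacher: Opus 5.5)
Your proposal is essentially the paper's proof: the paper likewise expands $(Lu)(t)$ and $(L_tu)(t)$ in the eigenbasis via \eqref{08.20.1}, moves $\mcA S(t-\tau)$ and $\mcA C(t-\tau)$ onto $z_0^*,z_1^*$ in the $L^2$ pivot pairing to get $\int_0^t\langle D^*\mcA S(t-\tau)z_0^*+D^*\mcA C(t-\tau)z_1^*,u(\tau)\rangle\,\df\tau$, and concludes by density of $H^1(\Sigma)$; your Step~2 only adds an approximation-in-the-data justification that the paper leaves implicit. In that extra layer, three points need care: $(Lu)(t)\in H_0^1(\Om;w)$ requires $u(t)=0$, since Corollary~\ref{09.03.C1} only gives $H^1(\Om;w)$; pairing the time-distribution $G$ with $u$ uses this together with $u(0)=0$ from Assumption~\ref{06.27.A1}(4); and strong continuity of $C$ and $S$ does not survive the unbounded operator $D^*\mcA$, so for general data ``$G(s)=0$ for all $s$'' can only be read distributionally in $s$.
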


\begin{proof}
	Since $(z_0^*,y(t))_{L^2(\Om)}+(z_1^*,\pt_ty(t))_{L^2(\Om)}=0$, and from \eqref{08.20.1} we obtain 
	\begin{equation*}
		\begin{split}
			(z_0^*,y(t))_{L^2(\Om)}
			&=(z_0^*,(Lu)(t))_{L^2(\Om)}=\sum_{n=1}^\iy \la_n^{\f{1}{2}}\int_0^t\sin \la_n^\f{1}{2}(t-\tau)(Du(\tau), \Phi_n)_{L^2(\Om)}(z_0^*, \Phi_n)_{L^2(\Om)}\df \tau\\
			&=\int_\Om\int_0^t\left(\mcA \sum_{n=1}^\iy \la_n^{-\f{1}{2}}\sin \la_n^\f{1}{2}(t-\tau)(z_0^*,\Phi_n)_{L^2(\Om)}\Phi_n\right)Du(\tau)\df x\df \tau \\
			&=\int_0^t\int_\Om \mcA S(t-\tau)z_0^* Du(\tau)\df x\df \tau=\int_0^t\left\langle D^*\mcA S(t-\tau)z_0^*,\, u(\tau)\right\rangle_{H^{-1/2},H^{1/2}}\df \tau, 
		\end{split}
	\end{equation*}
	and by the same argument as above we have 
	\begin{equation*}
		(z_1^*,\pt_ty(t))_{L^2(\Om)}=\int_0^t\left\langle D^*\mcA C(t-\tau)z_1^*,\, u(\tau)\right\rangle_{H^{-1/2},H^{1/2}}\df \tau, 
	\end{equation*}
	then we have proved the lemma since $H^1(\Sigma)$ is dense in $H^{1/2}(\Sigma)$. 
\end{proof}

\begin{theorem}\label{09.14.T1}
	The equation \eqref{07.08.01} is approximately controllable at time $T>0$ if and only if 
	\begin{equation*}
		\f{\pt z}{\pt \nu_\mcA}=0 \mbox{ on } \Sigma \ \Ra\ z_0=z_1=0 \mbox{ in }\Om, 
	\end{equation*}
	where $z$ is a solution of the following equation 
	\begin{equation}\label{09.14.1}
		\begin{cases}
			\pt_{tt}z+\mcA z=0, &\mbox{in }Q, \\
			z(T)=z_0, \pt_tz(T)=z_1, &\mbox{in }\Om,\\
			z=0, &\mbox{on }\Sigma, 
		\end{cases}
	\end{equation}
	and $z_0\in H_0^1(\Om;w), z_1\in L^2(\Om)$. 
\end{theorem}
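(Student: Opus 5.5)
Approximate controllability at time $T$ means $\overline{K_T}=E$, which is equivalent to $K_T^\perp=\{0\}$ in $E$. The plan is to combine the characterization of $K_T^\perp$ in Lemma~\ref{09.14.L1} with a duality identity that turns the condition there into the vanishing of the conormal derivative of the backward adjoint solution $z$ of \eqref{09.14.1}.

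\emph{Step 1: identify the annihilated functional.} Take $(z_0^*,z_1^*)$ orthogonal to $K_T$, using the $L^2\times L^2$ pairing employed in the proof of Lemma~\ref{09.14.L1}. If the $E$-inner product is used instead, I will apply the Riesz isomorphism via $\mcA$ on the first component; Lemma~\ref{09.03.L2} makes this an isometry $H_0^1(\Om;w)\to H^{-1}(\Om;w)$. By Lemma~\ref{09.14.L1}, orthogonality holds if and only if
\[
D^*\mcA S(T-s)z_0^*+D^*\mcA C(T-s)z_1^*=0,\qquad s\in[0,T].
\]

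\emph{Step 2: rewrite this expression as an adjoint trace.} Define $\zeta(s)=S(T-s)z_0^*+C(T-s)z_1^*$. By \eqref{09.03.3}, $\zeta$ solves $\pt_{ss}\zeta+\mcA\zeta=0$ with $\zeta=0$ on $\Sigma$. Its final data are $\zeta(T)=z_1^*$ and $\pt_s\zeta(T)=-z_0^*$. So $\zeta$ is exactly a solution of \eqref{09.14.1} with $(z_0,z_1)=(z_1^*,-z_0^*)$; the components swap because of the pivot pairing.

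Next I use the identity $-D^*\mcA y=\pt_{\nu_\mcA}y$, stated at the start of Section~\ref{S3}, together with Lemma~\ref{09.04.L3}. For smooth data, $(z_1^*,-z_0^*)\in\mcD(\mcA)\times H_0^1(\Om;w)$, the solution has $\zeta(s)\in\mcD(\mcA)\cap H^2(\mcO(\Ga;\be))$ by Lemma~\ref{07.08.L1'}. Hence the condition of Step~1 reads $\pt_{\nu_\mcA}\zeta=0$ on $\Sigma$. I then extend this equivalence to all finite-energy data by density, using the continuity of $(z_0,z_1)\mapsto\pt_{\nu_\mcA}z\in L^2(\Sigma)$ from Lemma~\ref{09.04.L3} and the fact that \eqref{09.12.3} is an isomorphism.

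\emph{Step 3: conclude.} Suppose the uniqueness implication holds. Then any $(z_0^*,z_1^*)\in K_T^\perp$ gives an adjoint solution with $\pt_{\nu_\mcA}\zeta=0$, hence $(z_1^*,-z_0^*)=0$, so $K_T^\perp=\{0\}$ and $K$ is dense. Conversely, suppose a nontrivial $(z_0,z_1)$ produces $\pt_{\nu_\mcA}z=0$. Reversing Step~2 yields a nonzero element of $K_T^\perp$, so $K_T$ is not dense. For this direction to give a genuine failure of approximate controllability, I must also check that enlarging to $K=\bigcup_{t>0}K_t$ does not help for controllability \emph{at time $T$}. I will interpret the statement as concerning $K_T$ itself. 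Alternatively, $K_t\subset K_T$ for $t\le T$ by time-translation with zero control on $[0,T-t]$; this requires the compatibility condition (4) of Assumption~\ref{06.27.A1} so that the concatenated control stays in $H^1(\Sigma)$.

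\emph{Main obstacle.} The delicate point is the density/regularity bookkeeping in Step~2. The pairing $\langle D^*\mcA\zeta(s),u(s)\rangle$ is only defined in $H^{-1/2}(\Ga)$ for general finite-energy data. I must show that vanishing of this pairing for all $u\in H^1(\Sigma)$ is equivalent to $\pt_{\nu_\mcA}\zeta=0$ in $L^2(\Sigma)$. For this I will rely on the hidden regularity \eqref{09.04.6}, which places $\pt_{\nu_\mcA}\zeta$ in $L^2(\Sigma)$, and on the density of $H^1(\Sigma)$ in $L^2(\Sigma)$, already invoked at the end of Lemma~\ref{09.14.L1}.
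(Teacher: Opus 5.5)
Your route is the same as the paper's. You use Lemma~\ref{09.14.L1}, recognize $S(T-s)z_0^*+C(T-s)z_1^*$ as a backward solution of \eqref{09.14.1}, and apply $-D^*\mcA z=\pt_{\nu_\mcA}z$. Your final data $(z_1^*,-z_0^*)$ even corrects the paper's sign, since $C(T-t)z_0+S(T-t)z_1$ has final velocity $-z_1$.

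However, the ``if'' direction has a genuine gap, which the paper's one-line proof shares. It sits exactly at the pairing issue you raise in Step~1. Density of $K_T$ in $E=H_0^1(\Om;w)\times L^2(\Om)$ must be tested against all of $E'$. Under the $E$ inner product, $(a,b)\perp K_T$ gives $(z_0^*,z_1^*)=(\mcA a,b)$, so $\zeta$ has final data $(b,-\mcA a)\in L^2(\Om)\times H^{-1}(\Om;w)$. That is one level below finite energy, and three things then fail:
\begin{enumerate}
\item your density step only reaches finite-energy data;
\item \eqref{09.04.6} gives no $L^2(\Sigma)$ trace for such $\zeta$;
\item the hypothesis of the theorem says nothing about such $\zeta$.
\end{enumerate}
Equivalently, $\zeta=\pt_s\eta$, where $\eta(s)=C(T-s)a-S(T-s)b$ is the finite-energy solution with final data $(a,b)$. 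Controls in $K_T$ must satisfy $u(\cdot,0)=u(\cdot,T)=0$ so that $y(T)\in H_0^1(\Om;w)$. For such controls, annihilating $K_T$ only says that $\pt_{\nu_\mcA}\eta$ is independent of time, not that it vanishes.

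This is not a technicality. Take $N=1$, $\Om=(0,1)$, $w\equiv1$, $T=1$. Finite-energy uniqueness holds there: expanding in $\sin n\pi x$, the sum and difference of $\pt_x z(0,t)$ and $\pt_x z(1,t)$ are $1$-periodic and $1$-antiperiodic in $t$, so their vanishing on $(0,1)$ kills all coefficients. Yet the element $(a,b)=(0,1)$ (i.e.\ $b\equiv1$) is $E$-orthogonal to $K_1$. Indeed, d'Alembert's formula (no reflections before $t=1$) gives $\int_0^1\pt_t y(x,1)\,dx=\sum_{\xi\in\{0,1\}}[u(\xi,1)-u(\xi,0)]=0$. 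So at $T=1$ the criterion holds but $K_1$ is not dense in $E$.

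Your argument does prove the equivalence for approximate controllability in $L^2(\Om)\times H^{-1}(\Om;w)$, the transposition space of Lemma~\ref{09.04.L1}. Its dual under the $L^2$ pivot is $L^2(\Om)\times H_0^1(\Om;w)$, so $(z_1^*,-z_0^*)$ is exactly finite-energy data. Your Steps~1--3 then go through verbatim, using \eqref{09.04.6} and the density of $H^1(\Sigma)$ in $L^2(\Sigma)$. Since $E$ embeds densely into that space, this also gives the ``only if'' direction for $E$.

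For the ``if'' direction in $E$ you need a stronger hypothesis, e.g.\ finite-energy uniqueness on intervals of length $T-h$ for all small $h>0$. Apply it to $\eta(\cdot)-\eta(\cdot-h)$, whose conormal derivative vanishes on $(h,T)$. This shows that $\eta$ is time-independent, hence $\mcA\eta=0$ and $\eta=0$.
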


\begin{proof}
	Note that $z(t)=C(T-t)z_0+S(T-t)z_1$ is the solution of \eqref{09.14.1}, then we have proved the theorem from Lemma \ref{09.14.L1} with $z_0=z_1^*$ and $z_1=z_0^*$. 
	This is also the HUM in \cite{Lions}. 
\end{proof}

\begin{remark}
Theorem \ref{09.14.T1} reduces approximate controllability to a unique continuation/observability property for the adjoint system. For degenerate operators, such properties can be delicate or may fail, so concrete verification is model-dependent and is not pursued here.
\end{remark}

\subsection*{Examples and open problems}
\phantomsection
\label{subsec:examples}

We briefly indicate how the abstract framework developed in Section~2 and the HUM-type criterion in Theorem~\ref{09.14.T1} apply to representative degenerate models. The main point is that once Assumption~\ref{06.27.A1} holds (so that the well-posedness and the lifting procedure via the Dirichlet map are available), the approximate controllability problem reduces to the corresponding unique continuation/observability property encoded in Theorem~\ref{09.14.T1}.

\paragraph{Example 3.1 (interior single-point degeneracy).}
Let $0\in\Omega\subset\mathbb{R}^N$ and consider the scalar weight
\[
w(x)=|x|^{\alpha}, \qquad \alpha\in(0,2),
\]
so that $\mathcal{A}y=-\Div(w\nabla y)$ degenerates only at the interior point $x=0$ and $w\ge \Lambda>0$ in a neighborhood of $\Gamma=\partial\Omega$.
For such weights (in the range above) one has $w\in A_2$ and the weighted Poincar\'e/compactness properties required in Assumption~\ref{06.27.A1}; see, e.g., the discussions in \cite{Yang1} and the references therein. Hence the well-posedness results of Section~2 and the boundary lifting $y=z+Du$ apply.
Consequently, the approximate controllability of \eqref{07.08.01} is characterized by Theorem~\ref{09.14.T1}, i.e., it is reduced to the vanishing of the conormal boundary observation $\partial_{\nu_{\mathcal A}}z$ for solutions of the adjoint system.

\paragraph{Grushin-type degeneracies and anisotropic models (discussion).}
Classical Grushin operators are typically anisotropic and can be written in divergence form with a degenerate symmetric matrix field $B(x)$, rather than a scalar weight $w(x)$.
Although the present paper is written for the scalar-weight prototype $\mathcal{A}=-\Div(w\nabla\cdot)$ in order to focus on the boundary input, lifting, and cosine-operator machinery, the HUM-type criterion in Theorem~\ref{09.14.T1} suggests that, whenever the corresponding well-posedness and trace/conormal estimates are available, approximate controllability is again equivalent to a suitable unique continuation/observability property for the adjoint equation.
Exact controllability and related observability inequalities for Grushin-type wave equations have been investigated in \cite{Yang}.

\paragraph*{Open problem: internal degeneracy and unique continuation.}
As already indicated above, the key obstruction for concrete degenerate models is the verification of the unique continuation (or equivalently observability) property appearing in Theorem~\ref{09.14.T1}.
In particular, the hyperbolic equation with an interior single-point degeneracy
\begin{equation}\label{eq:singlepoint-degenerate-model}
	\pt_{tt}y-\Div(|x|^\alpha\nabla y)=0 \quad \mbox{in } Q
\end{equation}
is closely related to the setting of \cite{Yang1}, but a full treatment of the unique continuation/observability issue in the present boundary-control framework remains an interesting topic for future work.

\section{Concluding remarks}
We extended the Lions–Lasiecka–Triggiani framework to degenerate hyperbolic equations with boundary inputs, proving approximate controllability criteria. Open problems remain, including the exact controllability of waves with internal point degeneracies.

\bibliographystyle{elsarticle-num}

\begin{thebibliography}{99}

\bibitem{Bellassoued}
M.~Bellassoued and M.~Yamamoto,
\newblock {\em Carleman estimates and applications to inverse problems for hyperbolic systems},
\newblock Springer, Tokyo, 2017.

\bibitem{Cannarsa1}
F.~Alabau-Boussouira, P.~Cannarsa, and G.~Leugering,
\newblock Control and stabilization of degenerate wave equations,
\newblock {\em SIAM J. Control Optim.}, 55 (2017), 2052--2087.

\bibitem{Chen1}
G.~Chen,
\newblock Control and stabilization for the wave equation in a bounded domain,
\newblock {\em SIAM J. Control Optim.}, 17 (1979), 66--81.

\bibitem{Chen2}
G.~Chen,
\newblock Control and stabilization for the wave equation in a bounded domain II,
\newblock {\em SIAM J. Control Optim.}, 19 (1981), 114--122.

\bibitem{Evans}
L.~C. Evans,
\newblock {\em Partial Differential Equations} (Second Edition),
\newblock AMS, Providence, 2010.

\bibitem{Fattorini}
H.~O. Fattorini,
\newblock Ordinary differential equations in linear topological spaces I,
\newblock {\em J. Differ. Equ.}, 5 (1968), 72--105.

\bibitem{Fattorini1}
H.~O. Fattorini,
\newblock Boundary control systems,
\newblock {\em SIAM J. Control Optim.}, 6 (1968), 349--385.

\bibitem{Fragnelli}
G.~Fragnelli, D.~Mugnai, and A.~Sbaï,
\newblock Boundary controllability for degenerate/singular hyperbolic equations in nondivergence form with drift,
\newblock {\em Appl. Math. Optim.}, 91 (2025), 42--74.

\bibitem{GC}
J.~Garcia-Cuerva and J.~L. Rubio de Francia,
\newblock {\em Weighted Norm Inequalities and Related Topics},
\newblock North-Holland Publishing Co., Amsterdam, 1985.

\bibitem{Gueye}
M.~Gueye,
\newblock Exact boundary controllability of 1-D parabolic and hyperbolic degenerate equations,
\newblock {\em SIAM J. Control Optim.}, 52 (2014), 2037--2054.

\bibitem{Heinonen}
J.~Heinonen, T.~Kilpeläinen, and O.~Martio,
\newblock {\em Nonlinear Potential Theory of Degenerate Elliptic Equations},
\newblock Oxford University Press, New York, 1993.

\bibitem{Lasiecka}
I.~Lasiecka and R.~Triggiani,
\newblock Riccati equations for hyperbolic partial differential equations with $L^2(0,T;L^2(\Gamma))$ Dirichlet boundary terms,
\newblock {\em SIAM J. Control Optim.}, 24 (1986), 884--925.

\bibitem{Lasiecka1}
I.~Lasiecka, J.-L. Lions, and R.~Triggiani,
\newblock Non-homogeneous boundary value problems for second order hyperbolic operators,
\newblock {\em J. Math. Pures Appl.}, 65 (1986), 149--192.

\bibitem{Lasiecka2}
I.~Lasiecka and R.~Triggiani,
\newblock A cosine operator approach to modeling $L^2(0,T;L^2(\Gamma))$ boundary input hyperbolic equations,
\newblock {\em Appl. Math. Optim.}, 7 (1981), 35--93.

\bibitem{Lasiecka3}
I.~Lasiecka, R.~Triggiani, and P.-F. Yao,
\newblock An observability estimate in $L^2(\Omega)\times H^{-1}(\Omega)$ for second-order hyperbolic equations with variable coefficients,
\newblock in {\em Control of Distributed Parameter and Stochastic Systems}, IFIP, 1999, pp.~71--78.

\bibitem{Lions}
J.-L. Lions and E.~Magenes,
\newblock {\em Non-homogeneous Boundary Value Problems and Applications},
\newblock Springer-Verlag, Berlin, 1972.

\bibitem{Pazy}
A.~Pazy,
\newblock {\em Semigroups of Linear Operators and Applications to Partial Differential Equations},
\newblock Springer, New York, 1983.

\bibitem{Slemrod}
M.~Slemrod,
\newblock Stabilization of boundary control systems,
\newblock {\em J. Differ. Equ.}, 22 (1976), 402--415.

\bibitem{Trudinger}
N.~S. Trudinger,
\newblock Linear elliptic operators with measurable coefficients,
\newblock {\em Ann. Scuola Norm. Sup. Pisa}, 27 (1973), 265--308.

\bibitem{Yang}
D.~Yang, W.~Wu, B.-Z. Guo, and S.~Chai,
\newblock On exact controllability for a class of 2-D Grushin hyperbolic equations,
\newblock {\em J. Differ. Equ.}, 448 (2025), 113710.

\bibitem{Yang1}
D.~Yang, W.~Wu, and B.-Z. Guo,
\newblock On controllability of a class of $N$-dimensional hyperbolic equations with internal single-point degeneracy,
\newblock preprint, 2025.

\bibitem{Yao}
P.-F. Yao,
\newblock {\em Modeling and Control in Vibrational and Structural Dynamics: A Differential Geometric Approach},
\newblock CRC Press, Boca Raton, 2014.

\bibitem{Gao}
M.~Zhang and H.~Gao,
\newblock Interior controllability of semi-linear degenerate wave equations,
\newblock {\em J. Math. Anal. Appl.}, 457 (2018), 10--22.

\bibitem{Zuazua}
E.~Zuazua,
\newblock {\em Exact Controllability and Stabilization of the Wave Equation},
\newblock Springer, New York, 2024.

\end{thebibliography}

\end{document}